\newtheorem{theorem}{Theorem}[section]
\newtheorem{corollary}[theorem]{Corollary}
\newtheorem{lemma}[theorem]{Lemma}
\theoremstyle{definition}
\newtheorem{definition}[theorem]{Definition}
\newtheorem{claim}[theorem]{Claim}
\theoremstyle{remark}
\newtheorem{remark}[theorem]{Remark}
\def\hhh{\mathfrak{h}}
\def\ppp{\mathfrak{p}}
\def\ggg{\mathfrak{g}}
\def\mmm{\mathfrak{m}}
\def\gl{\mathfrak{gl}}
\def\bz{{\bar{\bar 0}}}
\def\bo{{\bar{\bar 1}}}
\def\bc{{\mathbf{c}}}
\def\bw{{\bigwedge}}
\def\bfV{\mathbf{V}}
\def\bfG{{\mathbf{G}}}
\def\bfone{{\mathbf{1}}}
\def\bfa{{\mathbf{a}}}
\def\bfb{{\mathbf{b}}}
\def\bfc{{\mathbf{c}}}
\def\bfd{{\mathbf{d}}}
\def\bfi{\mathbf{i}}
\def\bfj{\mathbf{j}}
\def\bfr{\mathbf{r}}
\def\bfs{\mathbf{s}}
\def\bft{\mathbf{t}}
\def\bfk{\mathbf{k}}
\def\bfh{\mathbf{h}}
\def\calh{\mathcal{H}}
\def\caln{\mathcal{N}}
\def\calg{\mathcal{G}}
\def\frakG{\mathfrak{G}}
\def\rrtimes{{\scriptstyle{\rtimes\!\!\!\!\!\bigcirc}}}
\def\grsf{{\mathsf{gr}}}
\def\co{\mathcal{O}}
\def\GL{\text{GL}}
\def\Mat{\text{Mat}}
\def\fraksd{\mathfrak{S}_d}
\def\Ad{\mathsf{Ad}}
\def\ad{\mathsf{ad}}
\def\ev{\mathsf{ev}}
\def\str{\mathsf{str}}
\def\det{\mathsf{det}}
\def\op{\mathsf{op}}
\def\bfvotd{{{\textbf{V}}^{\otimes d}}}
\def\Votd{{V^{\otimes d}}}
\def\sfr{\textsf{r}}
\def\End{\text{End}}
\def\Dist{\text{Dist}}
\def\Lie{\text{Lie}}
\def\im{\text{Im}}
\def\col{\text{col}}
\def\row{\text{row}}
\def\bs{\textbf{s}}
\def\bfv{\textbf{V}}
\def\SHd{H_d}
\def\SHl{H_\ell}
\def\bbc{\mathbb{C}}
\def\bbz{\mathbb{Z}}
\def\ca{\mathcal{A}}
\newcommand{\red}[1]{{\color{red}#1}}
\numberwithin{equation}{section}
\begin{document}
	
\title[Super Vust theorem and higher Schur-Sergeev duality]{Super Vust theorem
and Schur-Sergeev duality for principal finite $W$-superalgebras}
\author{Changjie Cheng}
\address{School of Mathematical Sciences,   East China Normal University, No. 500 Dongchuan Rd., Shanghai 200241, China}
\email{cjcheng@math.ecnu.edu.cn}
\author{Bin Shu}
\address{School of Mathematical Sciences, Ministry of Education Key Laboratory of Mathematics and Engineering Applications \& Shanghai Key Laboratory of PMMP,  East China Normal University, No. 500 Dongchuan Rd., Shanghai 200241, China} \email{bshu@math.ecnu.edu.cn}
\author{Yang Zeng}
\address{School of Mathematics, Nanjing Audit University, No. 86 West Yushan Rd., Nanjing, Jiangsu Province 211815, China}
\email{zengyang@nau.edu.cn}
\subjclass[2010]{Primary: 17B20; Secondary: 17B10, 17B08, 81R05}
\keywords{Schur-Weyl duality; Schur-Sergeev duality; double centralizer property;
finite $W$-superalgebras; Kazhdan-filtration; principal (regular) nilpotent
orbits; degenerate cyclotomic Hecke algebras}
\thanks{This work is partially supported by the National Natural Science Foundation of China (Nos. 12071136,
12271345, 12461005), and by Science and Technology Commission of Shanghai Municipality (No. 22DZ2229014).}


\begin{abstract}
    Considering the general linear Lie superalgebra $\gl(m|n)=\gl(m|n)_\bz\oplus \gl(m|n)_\bo$ over $\bbc$,  we first formulate a super version of Vust theorem associated with a principal nilpotent element $e\in \gl(m|n)_\bz$.  As an application of this theorem, we then obtain  a Schur-Sergeev duality for principal finite $W$-superalgebras which is partially a super version of Brundan-Kleshchev's higher level Schur-Weyl duality established in \cite{BKl}.
\end{abstract}
\maketitle
	
\setcounter{tocdepth}{1}\tableofcontents	
\setcounter{section}{-1}
\section{Introduction}

The purpose of the present paper is to prove a super Vust theorem,
   and then, as an application, to extend Brundan-Kleshchev's higher Schur-Weyl duality to the super
case. We successfully achieve a super
version of Brundan-Kleshchev's result when $e$ is principal nilpotent (see Definition \ref{def:  principal nilpotent}).

\subsection{}\label{se:1}     We first recall Brundan-Kleshchev's work. Let $G=\text{GL}(V)$ and $\ggg=\gl(V)$ be the general linear Lie
 group and its Lie algebra on $V$ over $\bbc$ with $\dim_\bbc V=n$ respectively, and $e$ a
 nilpotent element of $\ggg$.  Set $\ggg_e=\{X\in\ggg\mid \ad{e}(X)=0\}$, the
 centralizer of $e$ in $\ggg$. There is a natural action $\phi$ of $\ggg_e$ on
 the tensor product $V^{\otimes d}$. The Vust theorem generalizes the classical Schur-Weyl duality in the setup of $\ggg_e$ (see \cite{KP}).  We briefly give an account of  it below, in connection with the finite $W$-algebras and the degenerate cyclotomic affine Hecke algebras.


Recall that the degenerate affine Hecke algebra $H_d$ is an associative algebra which is as a vector space, equal to the tensor product $\bbc[x_1,\ldots,x_d]\otimes \bbc S_d$ of a polynomial algebra and the group algebra of the symmetric group $S_d$.
{If we identify $\bbc[x_1,\dots,x_d]$ and
$\bbc S_d$ with the subspaces $\bbc[x_1,\dots,x_d] \otimes 1$ and
$1 \otimes \bbc S_d$ of $H_d$ respectively, then the associative algebra $H_d$ is generated by the elements $\{x_i,\; s_j=(j\; j+1)\in S_d\mid i=1,\ldots, d; j=1,\ldots,d-1\}$, and the multiplication is defined so that
$\bbc[x_1,\dots,x_d]$ and $\bbc S_d$ are subalgebras of $H_d$,
$s_ix_j=x_js_i$ if $j\ne i,i+1$, and $s_ix_{i+1}=x_is_{i}+1$.}

 On the other hand, associated with $e$ there
 is a twisted truncated polynomial algebra $\bbc_l[x_{1},\ldots,x_{d}]$, 
 where $l$ is a positive integer determined by the
 Young diagram of the adjoint orbit $G.e$, and $\bbc_l[x_{1},\ldots,x_{d}]$ is a  quotient of the polynomial algebra $\bbc[x_1,\ldots,x_{d}]$ by the ideal
 generated by  $x_i^l$, $i=1,\ldots,d$. So $x_i^l=0$  in $\bbc_l[x_1,\ldots,x_d]$ for $i=1,\ldots,d$.
 {Extend the usual right action of $S_d$ on $V^{\otimes d}$ to an action of
the twisted tensor product $\bbc_l[x_{1},\ldots,x_{d}]\rrtimes \bbc\fraksd$ (which has the ground space $\bbc_l[x_{1},\ldots,x_{d}]\otimes \bbc\fraksd$)
by defining the action of $x_i$
to be as the endomorphism
$1^{\otimes(i-1)}\otimes e \otimes 1^{\otimes (d-i)}$.
This commutes with the natural action of $\mathfrak{g}_e$,
making $V^{\otimes d}$ into a $(U(\mathfrak{g}_e),
\bbc_l[x_{1},\ldots,x_{d}]\rrtimes \bbc\fraksd)$-bimodule.}
 {We have defined a homomorphism $\psi$ from $\mathbb{C}_{l}[x_{1},\ldots,x_{d} ]\rrtimes
\mathbb{C}{\fraksd}$ to $\text{End}_{U(\ggg_{e})}(V^{\otimes d})^{\operatorname{\op}}$.
The classical Vust's theorem says
\begin{align}	\label{form: vust}
\psi(\mathbb{C}_{l}[x_{1},\ldots,x_{d} ]\rrtimes
\mathbb{C}{\fraksd})=\text{End}_{U(\ggg_{e})}(V^{\otimes d})^{\operatorname{\op}}.
\end{align}}
Then a generalized Schur-Weyl duality, associated with $e$ further asserts
  that $\ggg_e$ and $\bbc_l[x_{1},\ldots,x_{d}]\rrtimes \bbc\fraksd$ are
  double-centralizers in $\End_\bbc(\Votd)$. This is to say, addition to (\ref{form: vust}), the natural representation $\phi:U(\ggg_e)\rightarrow \End_\bbc(\Votd)$ satisfies
\begin{align}\label{form: vust+}	
\text{End}_{\mathbb{C}_{l}[x_{1},\ldots,x_{d} ]\rrtimes
\mathbb{C}{\fraksd}}(V^{\otimes d}) = \phi(U(\ggg_{e})).
\end{align}
Starting from here, Brundan and Kleshchev went further, considering the filtered
deformation of the above double-centralizers. Consider the finite  $W$-algebra
$W_{\chi}$ with $\chi$ being the linear dual of $e$ via the Killing form on
$\ggg$. According to \cite{P2}, a well-known result says that  $W_\chi$ is actually a
filtered-deformation of $U(\ggg_e)$.     There is a left action of $W_\chi$ on $V^{\otimes d}$, denoted by $\Phi$. On the other side, there is a degenerate cyclotomic Hecke algebra $H_d(\Lambda)$ which is some remarkable finite-dimensional quotient of $H_d$ by the two-sided ideal parameterized by a dominant weight $\Lambda$ of level $l$ (for the root system of type $A_\infty$) related to $e$. Brundan-Kleshchev defined a nontrivial right action of $H_d(\Lambda)$ on $V^{\otimes d}$, denoted by $\Psi$,
such that $(\Phi,\Psi)$  is a filtered deformation of $(\phi,\psi)$ on $V^{\otimes d}$.
They obtained the following higher
Schur-Weyl duality:
{\begin{align*}
	\text{End}_{H_{d}(\Lambda)}(\Votd) &= \Phi(W_{\chi}),\cr
\Psi(H_{d}(\Lambda))&=\text{End}_{W_{\chi}}(\Votd)^{\operatorname{\op}}.
\end{align*}}
\subsection{} Recall that Sergeev's super version of Schur-Weyl duality (which is called Schur-Sergeev
duality in the present paper) describes the double centralizer property of
general linear Lie superalgebras and symmetric groups happening on the tensor
products of  super spaces (see Theorem \ref{ss dual} precisely).

 Consider $V=
\mathbb{C}^{m|n}$,  $\ggg =\gl(m|n)$ and  the natural action of   $\ggg$ on
$\Votd$ is defined via
\begin{align}\label{eq: superaction0}
\rho(X)(v_{1}\otimes v_{2}\otimes \cdots \otimes v_ {d})
=\sum_{i=1}^{d}(-1)^{|X|(|v_{1}|+|v_{2}|+\cdots +|v_{i-1}|) }v_{1}\otimes
v_{2}\otimes \cdots \otimes Xv_{i}\otimes \cdots\otimes v_{d}
 \end{align}
 for any $\bbz_2$-homogeneous $X \in \ggg$  and $v_i\in V$ ($i=1,\ldots,d$) with
 an appointment  $v_0=0$.
 Here and further  $|w|$ always stands for the parity of $w\in W_{|w|}$ when we talk about a superspace $W=W_\bz\oplus W_\bo$.

\subsubsection{Super Vust theorem}\label{sec: super Vth 0}
Let $e$ be any given principal nilpotent element in $\ggg_{\bz}$.
 As to the twisted action {$\psi_d$} of the symmetric group  ${\fraksd}$ generated by
 transpositions $\{s_{i} = (i\;i+1)\mid i = 1,2,\ldots, d-1\}$, the action of
 $s_i$ is defined as below:
\begin{align}\label{super symaction}
\psi_{d}(s_{i}): v_{1}\otimes \cdots \otimes v_{d}\mapsto v_{1}\otimes
\cdots\otimes \tau (v_{i}\otimes v_{i+1 }) \otimes\cdots\otimes v_{d},
\end{align}
where $\tau: V\otimes V\rightarrow V\otimes V$  sends $v\otimes w$ to
$(-1)^{|w||v|}w\otimes v$.

As one of our main results, we have  the following super Vust theorem.

\begin{theorem}\label{thm Vust0}  Keep the notations and assumptions as above. In particular, $\ggg=\gl(m|n)$, $e\in\ggg_\bz$ is a principal nilpotent element.
    Let $\mathscr{A}$
denote the subalgebra generated by {$\psi_{d}({\fraksd})$} in
$\End_\bbc(V^{\otimes d})$. Then $\text{End}_{\rho({\ggg_{e}})}(V^{\otimes d}) =
\mathscr{A}$.
\end{theorem}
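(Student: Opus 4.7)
My plan is to establish the two containments of the claimed equality separately. The inclusion $\mathscr{A}\subseteq\End_{\rho(\ggg_e)}(V^{\otimes d})$ is the routine one: by the super Schur-Sergeev duality (Theorem~\ref{ss dual}), $\pi_d(\bbc\fraksd)$ is the full commutant of $\rho(U(\ggg))$ in $\End_\bbc(V^{\otimes d})$, so in particular $\pi_d(\bbc\fraksd)\subseteq\End_{\rho(\ggg_e)}(V^{\otimes d})$ since $\rho(\ggg_e)\subseteq\rho(\ggg)$; the fact that the remaining $\rho(\ggg_e)$-generators lie in the same commutant is a direct commutator check, exactly mirroring the classical verification underlying (\ref{form: vust}).

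For the reverse inclusion $\End_{\rho(\ggg_e)}(V^{\otimes d})\subseteq\mathscr{A}$, the natural strategy is a reduction to the classical Vust theorem (\ref{form: vust}). Forgetting the $\bbz_2$-grading on $V$ yields an ungraded $(m+n)$-dimensional space $\widetilde{V}$; the super Lie algebra $\ggg=\gl(m|n)$ then embeds as a (non-graded) subalgebra of $\widetilde{\ggg}:=\gl(\widetilde{V})$, and $e\in\ggg_\bz$ becomes an ordinary nilpotent of $\widetilde{\ggg}$. The classical Vust theorem (\ref{form: vust}) describes $\End_{U(\widetilde{\ggg}_e)}(\widetilde{V}^{\otimes d})$ explicitly as the image of the semidirect-product algebra $\bbc_l[x_1,\ldots,x_d]\rrtimes\bbc\fraksd$; transporting this through the identification $\widetilde{V}^{\otimes d}=V^{\otimes d}$ of underlying vector spaces, and then taking the further centralizer with the odd part $(\ggg_e)_\bo\subseteq\widetilde{\ggg}_e$, should recover exactly $\mathscr{A}$.

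The main obstacle I anticipate is the careful bookkeeping of super-signs throughout this reduction. Under $\widetilde{V}^{\otimes d}=V^{\otimes d}$, the super-symmetric action $\pi_d$ of $\fraksd$ differs from the ordinary permutation action by signs depending on the parities of the basis vectors permuted; similarly the diagonal super-action $\rho$ of $\ggg$ differs from the restriction of the ordinary diagonal $\widetilde{\ggg}$-action. To handle this cleanly I would introduce the Kazhdan-type filtration (cf.\ the keywords) on $\End_\bbc(V^{\otimes d})$ determined by an $\sll_2$-triple $(e,h,f)\subseteq\ggg_\bz$---available by Jacobson--Morozov applied to the reductive even part---prove the associated-graded version of the theorem via classical invariant theory for the Levi subgroup of $\ggg_e$, and then lift back to the filtered level by a PBW-type dimension count. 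In this framework the super-signs enter only through the fixed $\bbz_2$-grading of $V$ and can be absorbed into a single sign cocycle on $\fraksd$, reducing the sign bookkeeping to a finite combinatorial verification at the end.
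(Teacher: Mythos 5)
Your easy inclusion $\mathscr{A}\subseteq\End_{\rho(\ggg_e)}(V^{\otimes d})$ is fine and matches the paper. The reverse inclusion, however, has a genuine gap, and your proposed route differs fundamentally from — and is weaker than — the paper's.

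The paper does not reduce to the classical Vust theorem by forgetting the grading. It works \emph{intrinsically} in the super category, following the Kraft--Procesi strategy: (i) it proves that $\mathbb{C}[G.e]=\mathbb{C}[\overline{G.e}]$ for the supergroup orbit $G.e$ (Lemma~\ref{lemma3}), a super analogue of normality of nilpotent orbit closures that requires super-scheme-theoretic arguments about the parity structure of $\mathbb{C}[G.e]$; (ii) it uses the \emph{supertrace} first fundamental theorem of invariant theory for $\GL(m|n)$ (Lemma~\ref{3.3.1}) to describe the $G$-equivariant maps $\End(\mathbf{V})\to\End(\mathbf{V})^{\otimes d}$ as generated by $X\mapsto\sigma\cdot(X^{h_1}\otimes\cdots\otimes X^{h_d})$ (Lemma~\ref{lemma4-(2)}); and (iii) it evaluates at $X=e$ to produce $\mathscr{A}$. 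Both technical inputs are genuinely super and have no counterpart under your ``forget the grading'' reduction.

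Concretely, your reduction runs into two problems. First, the Lie-algebra embedding $\gl(m|n)\hookrightarrow\gl(m+n)$ you invoke does not exist: on odd-odd pairs the superbracket is an anticommutator, so $\gl(m|n)$ is not a subalgebra of $\gl(m+n)$, and the super action $\rho$ of $(\ggg_e)_\bo$ on $V^{\otimes d}$ is not the restriction of any $\widetilde{\ggg}_e$-action to which classical Vust could apply. The statement ``taking the further centralizer with the odd part $(\ggg_e)_\bo\subseteq\widetilde{\ggg}_e$'' therefore does not parse: there is no $\widetilde{\ggg}_e$-module on $V^{\otimes d}$ restricting to $\rho$. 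Second, and more decisively, the super invariant theory is not a sign-twist of the classical one. The basic conjugation-invariants in $\End_{\bbc}(V)^{\otimes q}$ are supertraces $\str(X_{i_1}\cdots X_{i_j})=\tr(\mathbf{a})-\tr(\mathbf{d})$, which are \emph{different polynomial functions} from the ordinary traces $\tr(X_{i_1}\cdots X_{i_j})$, not differing merely by a cocycle on $\fraksd$. So the ``absorb into a single sign cocycle'' step cannot recover the correct generating set of invariants, and the associated-graded reduction you sketch would prove the wrong statement. A corrected argument must engage directly with the super Schur--Sergeev duality, the supertrace fundamental theorem, and the orbit-closure normality in the super-scheme sense, as the paper does.
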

     The proof will be given in \S \ref{sec 4}.

\subsubsection{} For the other side of the desired double centralizers, a careful inspection and thorough argument involving parities shows that Brundan-Kleshchev's original strategy as in \cite{BKl} can be carried out here. Roughly speaking, our strategy is that we first extend  the  Schur-Sergeev
duality for $\gl(m|n)$ to the level of $\gl(m|n)_e$ for a principal nilpotent
$e\in\gl(m|n)_\bz$, where the general Lie superalgebra  $\gl(m|n)$ has
$\bbz_2$-superspace decomposition $\gl(m|n)=\gl(m|n)_\bz\oplus \gl(m|n)_\bo$ for
$\bbz_2=\{\bz,\bo\}$, which says that $U(\gl(m|n)_e)$ and
$\bbc_n[x_{1},\ldots,x_{d}]\rrtimes \bbc\fraksd$ (assume that $m\leq n$) play the
role of the double centralizers in $\End_\bbc(V^{\otimes d})$. Here the
algebra $\bbc_n[x_{1},\ldots,x_{d}]\rrtimes \bbc\fraksd$ has the same meaning as defined
previously. However, the action of the generators  $s_i$ on $V^{\otimes d}$ has
the super meaning (see  (\ref{super symaction})).  Then we exploit Brundan-Kleshchev's arguments as  in \cite{BKl} to establish the
Schur-Sergeev duality for the corresponding finite $W$-superalgebra. This is
carried out  for principal nilpotent elements
 in the present paper.
We accomplish this by a couple of steps.



 Owing to \cite{BBG}, associated
with a principal nilpotent element $e\in \gl(m|n)_{\bz}$ the centralizer
$\gl(m|n)_e$  is described clearly. Consequently, $U(\ggg_e)$ and its filtered
deformation---finite $W$-superalgebra $W_\chi$ are well investigated in \cite{BBG}.
Those results  are the start-point of our arguments.

 With a super version of Vust theorem already established,  we then
develop some counterpart theory in the super case as some necessary preparations, i.e., the super version of Premet's theorem on the gradation
isomorphism of finite $W$-algebras as in \cite{P2}.

\subsubsection{Filtrations (in ``Dynkin" sense/Kazhdan sense) on $U(\ggg)$ and then on $W_\chi$ for a basic classical Lie superalgebra $\ggg$ and a nilpotent $e\in\ggg_\bz$} \label{sec finite W}
For any basic classical Lie superalgebra $\ggg$
over $\bbc$ and any given nilpotent element $e\in\ggg_\bz$, fix an
$\mathfrak{sl}_2$-triple $f,h,e\in\ggg_\bz$, and keep the notation
$\mathfrak{g}_e=\text{Ker}(\ad e)$ in $\mathfrak{g}$. {If we set ${\ggg}(i)=\{x\in{\ggg}\mid[h,x]=2ix\}$, then} the linear
operator ad\,$h$ defines a $\frac{\mathbb{Z}}{2}$-grading
$\mathfrak{g}=\bigoplus_{i\in\frac{\mathbb{Z}}{2}}\mathfrak{g}(i)$ 
on $\mathfrak{g}$, {which is called a Dynkin grading (In fact, the original Dynkin grading is defined as $\mathfrak{g}=\bigoplus_{i\in\mathbb{Z}}\mathfrak{g}(i)'$ with ${\ggg}(i)'=\{x\in{\ggg}\mid[h,x]=ix\}$. To be compatible with our settings in \S\ref{sec: skryabin}, a little change is made here, which of course will not cause any confusion).} Define the Kazhdan degree on $\mathfrak{g}$ by declaring
$x\in\mathfrak{g}(j)$ to be $(j+1)$.     Consequently, one can define Dynkin filtration/Kazhdan filtration both on $U(\ggg)$ and $W_\chi$ respectively.
Recall that in \cite{ZS} a finite $W$-superalgebra is introduced for any
basic classical Lie superalgebra $\ggg$ over $\bbc$ and any given nilpotent
element $e\in\ggg_\bz$ (see  Definition
\ref{W-C} for the definition of $W_\chi$ corresponding to $e$,  where
$\chi=(e,-)$ is the linear dual of $e$ associated with the
non-degenerate bilinear form defined on $\ggg_\bz$), and
the PBW structure theorem for the
finite $W$-superalgebra $W_\chi$ is presented.
It is shown that the
construction of finite $W$-superalgebras $W_\chi$   can be divided into two
cases, in virtue of the parity of the so-called ``judging number" $\sfr$ which is equal to the dimension of $\ggg(-\frac{1}{2})_{\bo}$ (see \cite[Theorem 4.5]{ZS}). Moreover, under the Kazhdan
grading $W_\chi$ is a filtered deformation of $S(\ggg_e)$ or $S(\ggg_{e})\otimes \mathbb{C}[\Theta]$ with $\mathbb{C}[\Theta]$ being an exterior algebra generated by an element $\Theta$, depending on the parity of $\sfr$ we discussed above (see Theorem \ref{ZW pbw}). In the present paper, we will denote by $\grsf'$ for the graded algebra of  $U(\ggg)$ and also its subalgebras and quotients under the Kazhdan grading.

    Another important ingredient in our arguments is to prove an analogue of Premet's result
    \cite[Proposition 2.1]{P2}, which says that when $\sfr$ is even, $W_\chi$ is indeed a filtrated
    deformation of $U(\ggg_e)$, associated with the filtration
    of $W_\chi$ arising from the Dynkin grading on the above $(\ggg, e)$
    provided by $\mathfrak{sl}_2$-triple $\{e,h,f\}$. In the following, the associated grading is
    denoted by $\grsf$.

    \begin{theorem}(see Theorem \ref{cdd1})  \label{thm: good grading0}
        Let $\ggg$ be a basic classical Lie superalgebra over $\bbc$.
    For any given nilpotent $e\in\ggg_\bz$, the
    associated graded algebra
    $\grsf W_\chi$  is isomorphic to  $U(\ggg_e)$ if $\sfr$ is even.
\end{theorem}
The above result is new.  It is notable that under the Dynkin grading, we can always choose $\sfr$ to be an even number for any nilpotent element $e\in \ggg_\bz$ for $\ggg=\gl(m|n)$  (see Remark \ref{pbw1}). So the isomorphism in the above theorem always exists  for all nilpotent $e\in\ggg_\bz$ for $\ggg=\gl(m|n)$ (in the case for  a principal nilpotent $e\in\gl(m|n)_\bz$, this  was observed in \cite[Remark 4.6]{BBG}).

 However, for a general pair $(\ggg, e)$ the judging number $\sfr$ is not necessarily even, then in the above statement $W_\chi$ has to be replaced by a refined $W$-superalgebra $W'_\chi$ (see Theorem \ref{cdd1}). The refined
$W$-superalgebra is defined as
	$$W_{\chi}':= Q_{\chi}^{\ad\mmm'},$$
where ${\mmm}'={\mmm}$ if $\sfr$ is an even number, and ${\mmm}'$ is a subspace properly containing ${\mmm}$  when $\sfr$ is an
odd number (see \S\ref{kazhdan} for  details). Clearly, $W'_\chi$ is a subalgebra of $W_\chi$.
In fact, the PBW theorem of $W_\chi'$
is of much the same as that of Theorem \ref{pbw} with $\sfr$ being odd, just abandoning the related
topics on the element $\Theta_{l+q+1}$ (see \S\ref{4.2} for details).
The authors also showed that the graded algebras $\grsf'W_\chi'\cong
S(\ggg_e)$ as $\bbc$-algebras under the Kazhdan grading in \cite[Corollary
3.8]{ZS1}. We refer to \cite[Theorem 3.7]{ZS1} for
more details.

 \subsubsection{Filtered deformations and Schur-Sergeev duality for principal $W$-superalgebras} \label{sec: SS
 duality 0}
 Turn back to the case $\ggg=\gl(m|n)$ which is a general linear Lie superalgebra on the superspace $V=\bbc^{m|n}$ with basis $\{v_{\bar 1},\ldots, v_{\overline{m}}\in V_\bz\}\cup \{v_1,\ldots,v_n\in V_\bo\}$,  and $e=\sum_{i=1}^{m-1}e_{\bar i,\overline{i+1}}+\sum_{j=1}^{n-1}e_{j,j+1}$  for $I(m|n)=\{\bar 1,\ldots,\overline{m}; 1,\ldots,n\}$ with parities $|\bar i|=\bz$, and $|j|=\bo$ ($i=1,\ldots,m$ and $j=1,\ldots,n$).
 Associated with $e$, we define a pyramid as in (\ref{pyramid})  where we make an assumption $m\leq n$ without loss of generality.
 Define a $\bbz$-grading $\ggg=\bigoplus_{i\in\bbz}\ggg(i)$ on $\ggg$ with $\ggg(i)$ being spanned by $e_{s,t}$ satisfying $i=\text{deg}(e_{s,t}):=\text{col}(t)-\text{col}(s)$, where $\text{col}(t)$ and $\text{col}(s)$ respectively stand for the column coordinates of the positions with $t$ and $s$ lying in the pyramid (\ref{pyramid}). Set $\ppp=\bigoplus_{i\geq 0}\ggg(i)$, $\hhh=\ggg(0)$ and $\mmm=\bigoplus_{i<0}\ggg(i)$. By \cite{BBG}, $\ggg_e$ is a graded subalgebra of $\ppp$,  precisely as described there.
 Consider the
 natural representation 
 of $\ggg=\gl(m|n)$ on $V= \mathbb{C}^{m|n}$, and  the natural action of   $\ggg$ on $\Votd$. Then one has a representation
 $\phi_d$ of $\ggg_e$ on $\Votd$, and the action $\psi_{d}$ of the symmetric group
 ${\fraksd}$ on $V^{\otimes d}$ (see \S\ref{sec: super Vth 0}).  {Let  $ \bar\psi_d:\mathbb
C_n[x_1,\dots,x_d]\rrtimes \mathbb C {\fraksd}\rightarrow
\operatorname{End}_{\mathbb C}(V^{\otimes d})^{\op}$}
 be the homomorphism arising
from the
right action of   $\mathbb C_n [x_1,\dots,x_d]\rrtimes \mathbb C {\fraksd}$  on
$V^{\otimes d}$.

\begin{theorem}\label{doublecentralizerin}
Let  $e\in\ggg=\gl(m|n)_\bz$ be a principal nilpotent element. The following double centralizer property holds:
\begin{align*}
	\phi_{d}(U(\ggg_{e})) &= \text{End}_{\mathbb{C}_{n}[x_{1},\ldots,x_{d}
]\rrtimes \mathbb{C}{\fraksd}}(V^{\otimes d}),\\
	\text{End}_{U(\ggg_{e})}(V^{\otimes d})^{\op} &=\bar\psi_{d}(\mathbb{C}_{n
}[x_{1},\ldots,x_{d}]\rrtimes \mathbb{C}{\fraksd}).
\end{align*}
\end{theorem}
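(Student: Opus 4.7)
The argument follows the strategy of Brundan--Kleshchev \cite{BKl}, adapted to the super setting by leveraging the super Vust theorem (Theorem \ref{thm Vust0}) together with the explicit structure of $\ggg_e$ for regular nilpotent $e$ given in \cite{BBG}. I would first establish the easy halves of both equalities by direct super-sign computation. Since $e$ is the regular nilpotent associated with the pyramid (\ref{pyramid}) and $m\leq n$, the restrictions of $e$ to $V_\bz$ and $V_\bo$ are Jordan blocks of sizes $m$ and $n$, so $e^n=0$ on $V$ and $\bar\psi_d$ descends to the cyclotomic quotient $\bbc_n[x_1,\ldots,x_d]\rrtimes\bbc\fraksd$. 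A direct super-coproduct calculation gives
\[
[\rho(X),\bar\psi_d(x_i)]=\pm\, 1^{\otimes(i-1)}\otimes[X,e]\otimes 1^{\otimes(d-i)},
\]
which vanishes for every $X\in\ggg_e$, and combined with the $\fraksd$-equivariance of $\rho$ on $V^{\otimes d}$ yields both inclusions $\bar\psi_d(\bbc_n[x_1,\ldots,x_d]\rrtimes\bbc\fraksd)\subseteq\text{End}_{U(\ggg_e)}(V^{\otimes d})$ and $\phi_d(U(\ggg_e))\subseteq\text{End}_{\bar\psi_d(\bbc_n[x_1,\ldots,x_d]\rrtimes\bbc\fraksd)}(V^{\otimes d})$.

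For the reverse inclusion in the second identity, apply Theorem \ref{thm Vust0} to identify $\text{End}_{U(\ggg_e)}(V^{\otimes d})$ with the subalgebra $\mathscr{A}$ of $\End_\bbc(V^{\otimes d})$ generated by $\pi_d(\fraksd)$ and $\rho(\ggg_e)$; it then suffices to realize $\mathscr{A}$ inside $\bar\psi_d(\bbc_n[x_1,\ldots,x_d]\rrtimes\bbc\fraksd)$. The subalgebra $\psi_d(\fraksd)$ is automatic; for $\rho(\ggg_e)$ I rely on the explicit graded basis of $\ggg_e$ from \cite{BBG}. Each basis element of $\ggg_e$ is either an even generator of the form ``polynomial in $e$ restricted to $V_\bz$ or $V_\bo$'' or an odd off-diagonal shift indexed by the rectangular block of the pyramid; the action $\rho(X)=\sum_i(\text{sign})X_i$ of such an $X$ is produced inside the cyclotomic image by starting from the symmetric sum $\rho(e^k)=\sum_i\bar\psi_d(x_i^k)$ and using the degenerate Hecke relation $s_jx_{j+1}=x_js_j+1$ inside $\bbc_n[x_1,\ldots,x_d]\rrtimes\bbc\fraksd$ to extract individual $x_j$-positions, with $\fraksd$-conjugation available to permute them. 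The super-signs entering $\rho$ for odd $X$ are absorbed by the super permutation signs in $\psi_d(s_j)$, leaving a clean expression in $\bar\psi_d(\bbc_n[x_1,\ldots,x_d]\rrtimes\bbc\fraksd)$.

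The first identity of the theorem follows from the second by a double-centralizer argument: the two sides commute mutually (by the easy direction above), and since $\bbc_n[x_1,\ldots,x_d]\rrtimes\bbc\fraksd$ acts semisimply enough on $V^{\otimes d}$ (as follows from the known representation theory of degenerate cyclotomic Hecke algebras), taking centralizers of the second identity produces $\phi_d(U(\ggg_e))=\text{End}_{\bar\psi_d(\bbc_n[x_1,\ldots,x_d]\rrtimes\bbc\fraksd)}(V^{\otimes d})$. The main obstacle lies in the reverse inclusion of the second identity: the generators of $\ggg_e$ beyond $\bbc[e]$---the block projectors onto $V_\bz, V_\bo$ and the odd supershifts---do not correspond to single cyclotomic generators and must be extracted inductively via the combinatorics of the pyramid (\ref{pyramid}), with careful bookkeeping of the super-signs in $\rho$ and of the relation $x_i^n=0$.
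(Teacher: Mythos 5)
Your proposal diverges from the paper's argument and has two genuine gaps, one in each half.

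For the first identity $\phi_d(U(\ggg_e)) = \text{End}_{\bbc_n[x_1,\ldots,x_d]\rrtimes\bbc\fraksd}(\Votd)$, you propose to deduce it from the second by taking centralizers, appealing to the cyclotomic side acting ``semisimply enough.'' This is exactly where the standard double-centralizer machinery fails: $\bbc_n[x_1,\ldots,x_d]\rrtimes\bbc\fraksd$ is not semisimple (the truncated polynomial factor has nilpotent radical generated by the $x_i$), and $\Votd$ is not a semisimple module over it, nor over $U(\ggg_e)$, since $\ggg_e$ is a non-reductive Lie superalgebra for regular nilpotent $e$. With no semisimplicity available, taking centralizers of the second identity gives only the easy inclusion $\phi_d(U(\ggg_e))\subseteq\text{End}_{\bbc_n[x_1,\ldots,x_d]\rrtimes\bbc\fraksd}(\Votd)$. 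The paper proves the hard reverse inclusion by a different route entirely, following \cite{BKl}: it identifies $U(\ggg_e)$ with $\Dist(M_e)$, shows the map $\pi_d\colon U(\ggg_e)\to\bbc[M_e]_d^*$ is surjective (Lemma \ref{flu1}) and that the representation $\omega_d$ is faithful (Lemma \ref{flu2}), and then proves (Lemma \ref{flu3} and its corollary) that any $E\in\End_\bbc(\Votd)$ commuting with the cyclotomic side is a linear combination of the explicit operators $\Theta_{\bfi,\bfj}$, which lie in $\im\phi_d=\im(\omega_d\circ\pi_d)$. Your proposal contains no substitute for this analysis.

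For the second identity, you propose to realize $\rho(\ggg_e)$ inside $\bar\psi_d(\bbc_n[x_1,\ldots,x_d]\rrtimes\bbc\fraksd)$ by extracting single tensor positions via $s_jx_{j+1}=x_js_j+1$. That inclusion is false: already for $d=1$, the even projector $\id_{V_\bz}=\sum_{i=1}^m e_{\bar i,\bar i}$ lies in $\ggg_e$, yet $\rho(\id_{V_\bz})=\id_{V_\bz}$ is not a polynomial in $e$ (any $p(e)$ has constant term $p(0)$ on both $V_\bz$ and $V_\bo$, so it cannot separate the two parities), and $\fraksd$ does not help for $d>1$. Relatedly, the algebra generated by $\rho(\ggg_e)$ cannot sit inside $\text{End}_{\rho(\ggg_e)}(\Votd)$, since $\ggg_e$ for $\gl(m|n)$ is not abelian. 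What the super Vust theorem actually delivers, via Lemmas \ref{lemma4-(1)}, \ref{lemma4-(2)} and \ref{3.3.1}, is that $\text{End}_{\rho(\ggg_e)}(\Votd)$ is generated by $\pi_d(\fraksd)$ together with operators of the form $\sigma\cdot(e^{h_1}\otimes\cdots\otimes e^{h_d})$, and these generators are precisely $\bar\psi_d(\bbc_n[x_1,\ldots,x_d]\rrtimes\bbc\fraksd)$; in other words the super Vust theorem \emph{is} the second identity, and the embedding you aim for is both unnecessary and impossible.
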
		

The complete proof of the above theorem will be left till \S\ref{2.5}.

\subsection{}
Now we turn to the finite $W$-superalgebras. Thanks to \cite{BBG}, $W_\chi$ for a principal nilpotent $e\in\ggg_\bz$ can be described precisely as a subalgebra of $U(\ppp)$, and $W_\chi$ can also be considered as a subalgebra of $U(\hhh)$ via Miura transform (see Remark \ref{W realization}(3)). Under the filtration arising from the grading on $U(\ppp)$ as  an enveloping algebra,  $W_\chi$ is a filtered subalgebra of $U(\ppp)$, whose gradation is isomorphic to $U(\ggg_e)$.

For any given $\bc=(c_1,\ldots,c_n)\in \bbc^n$ and any $e_{i,j}\in \ppp$, there is a one-dimensional $U(\ppp)$-module $\bbc_\bc=\bbc 1_\bc$ with the action $
e_{i,j}. 1_\bc=\delta_{i,j}(-1)^{|i|} c_{\text{col}(i)}1_\bc$ on the generator $1_\bc$.
Correspondingly,  there is a tensor representation $\Phi_{d,\bc}$ of $W_\chi$ on  $V_\bc^{\otimes d}=\bbc_\bc\otimes V^{\otimes d}$.

Let $\Lambda_\bc =\sum_{i=1}^n\Lambda_{c_i}$ be an element of the free abelian group generated by symbols     $\{\Lambda_{c_i}\mid i=1,\ldots,n\}$. The corresponding degenerate cyclotomic
Hecke algebra ${\SHd}(\Lambda_\bc)$
is the quotient  of the degenerate affine Hecke algebra ${\SHd}$ (see Definition \ref{def: dasha}) by the two-sided ideal generated
by  $\prod_{i=1}^n(x - c_i)$. The operator
 $$ \Omega =\sum_{i,j\in I(m|n)} (-1)^{|j|} e_{i,j} \otimes e_{j,i}$$
  on $\bbc_\bc\otimes V^{\otimes d}$ will play a crucial role in the arguments.
 By exploiting the arguments as in \cite{BK0} and  \cite{BKl} to the present case,  we will introduce  the right action of ${\SHd}$ on
 $V_\bc^{\otimes d}$ via  \begin{equation*}
x_i:= \bfone^{d-i}\otimes x \otimes \bfone^{i-1},\qquad
s_j := \bfone^{d-j-1}\otimes s \otimes \bfone^{j-1}
\end{equation*}
for all $1 \leq i \leq d$,   $1 \leq j \leq d-1$ with $d\geq 1$, where the operators $x$ and $s$ are defined through $\Omega$ respectively (see \S\ref{6} for details).
So we finally have
a homomorphism
{\begin{equation*}\label{psid}
\Psi_{d, \bc}: {\SHd}(\Lambda_\bc) \rightarrow \operatorname{End}_{\mathbb C}(V_\bc^{\otimes
d})^{\operatorname{\op}}.
\end{equation*}}
Then we have the following maps:
\begin{equation*}
W_{\chi} \stackrel{\Phi_{d,\bc}}{\longrightarrow}
\operatorname{End}_{\mathbb C}(V_\bc^{\otimes d})
\stackrel{\Psi_{d,\bc}}{\longleftarrow}
{\SHd}(\Lambda_\bc).
\end{equation*}
 Owing to Theorem
 \ref{doublecentralizerin} and the gradation property of $W_\chi$, the images of the maps $\Phi_{d,\bc}$ and $\Psi_{d,\bc}$ turn out to satisfy the double centralizer
property (see Theorem \ref{thm: concluding}). 
 Take a special value of $\bc=(0,0,\ldots,0):=\bf{0}$ and $\Lambda=\Lambda_{\bf0}$. Let $(W_{\chi})_d$ denote the image of $\Phi_{d,\bf{0}}$, and ${\SHd}(\lambda)$ the image of $\Psi_{d,\bf{0}}$.
 We finally have the following result:
\begin{theorem} \label{thm:dcthm}
When an even element $e$ is a principal nilpotent element in $\ggg=\gl(m|n)$, the following double centralizer property hold:
		\begin{align*}
		(W_{\chi})_d &= \operatorname{End}_{{\SHd}(\lambda)}(V^{\otimes
d}),\cr
		\operatorname{End}_{(W_{\chi})_d}(V^{\otimes d})^{\operatorname{\op}}&=
{\SHd}(\lambda).
	\end{align*}	
\end{theorem}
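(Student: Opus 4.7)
The plan is to deduce Theorem \ref{thm:dcthm} by specializing the parametrized double-centralizer statement (Theorem \ref{thm: concluding}) at $\bc=\mathbf{0}$. Under this specialization $\bbc_{\mathbf{0}}$ is the trivial $U(\ppp)$-module, so $V_{\mathbf{0}}^{\otimes d}=V^{\otimes d}$ and $(\Phi_{d,\mathbf{0}},\Psi_{d,\mathbf{0}})=(\Phi_d,\Psi_d)$. Moreover, the cyclotomic relation reads $\prod_{i=1}^n(x-c_i)=x^n$ at $\bc=\mathbf{0}$, so $\SHd(\Lambda)=\SHd(\Lambda_{\mathbf{0}})$ is isomorphic as an algebra to $\bbc_n[x_1,\ldots,x_d]\rrtimes\bbc\fraksd$, and under this identification the action $\Psi_d$ coincides with the twisted action $\bar\psi_d$ of Theorem \ref{doublecentralizerin}. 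Thus Theorem \ref{thm:dcthm} is exactly the $\bc=\mathbf{0}$ shadow of Theorem \ref{thm: concluding}.

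Theorem \ref{thm: concluding} itself would be proved in three steps. First, I would check that $\Phi_{d,\bc}(W_\chi)$ and $\Psi_{d,\bc}(\SHd(\Lambda_\bc))$ commute in $\End_\bbc(V_\bc^{\otimes d})$ by adapting the $\Omega$-computations of \cite{BK0,BKl} to the super setting, using the supersymmetric Casimir tensor $\Omega=\sum_{i,j\in I(m|n)}(-1)^{|j|}e_{i,j}\otimes e_{j,i}$ together with the Miura realization $W_\chi\hookrightarrow U(\ppp)$. Second, I would equip $\End_\bbc(V^{\otimes d})$ with the Kazhdan-type (natural) filtration coming from the $\bbz$-grading $\ggg=\bigoplus_i\ggg(i)$; by Theorem \ref{thm: natural grading0}, $\grsf_\natural W_\chi\cong U(\ggg_e)$, so the associated graded of $\Phi_d(W_\chi)$ equals $\phi_d(U(\ggg_e))$, and that of $\Psi_d(\SHd(\Lambda))$ equals $\bar\psi_d(\bbc_n[x_1,\ldots,x_d]\rrtimes\bbc\fraksd)$. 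Third, Theorem \ref{doublecentralizerin} asserts that these two classical images form a mutually centralizing pair in $\End_\bbc(V^{\otimes d})$. Combining these inputs with the commutation of step one, a standard filtration/dimension comparison promotes the a priori inclusions $\Phi_d(W_\chi)\subseteq\End_{\SHd(\Lambda)}(V^{\otimes d})$ and $\Psi_d(\SHd(\Lambda))\subseteq\End_{W_\chi}(V^{\otimes d})^{\op}$ to the desired equalities.

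The main obstacle I expect is the commutation step. In the purely even setting it is a routine verification; here one has to track simultaneously the factor $(-1)^{|j|}$ inside $\Omega$, the Koszul signs $(-1)^{|v||w|}$ in the supertwist $\tau$ used to build $\psi_d(s_i)$, and the parity-dependent shift built into the Miura realization of $W_\chi$. A secondary technical point is verifying that the filtered-deformation step really applies to $W_\chi$ itself (rather than the refined subalgebra $W'_\chi$); for $\ggg=\gl(m|n)$ this is automatic because the judging number $\sfr$ is always even (Remark \ref{pbw1}), so Theorem \ref{thm: natural grading0} applies to $W_\chi$ without refinement, and the specialization at $\bc=\mathbf{0}$ yields Theorem \ref{thm:dcthm}.
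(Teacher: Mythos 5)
Your overall strategy matches the paper exactly: Theorem \ref{thm:dcthm} is obtained by specializing Theorem \ref{thm: concluding} at $\bc = \mathbf{0}$, and Theorem \ref{thm: concluding} itself is proved by the filtered-deformation argument (the paper's Lemma \ref{thetrick}), feeding in $\grsf_\natural W_\chi \cong U(\ggg_e)$ (Theorem \ref{ag}), $\grsf \Psi_d \circ \zeta_d = \bar\psi_d$ (equation (\ref{bag}) and Lemma \ref{ze}), and the classical double centralizer of Theorem \ref{doublecentralizerin}. Your remark that the refinement $W'_\chi$ is unnecessary because $\sfr$ is even for $\gl(m|n)$ is also the right observation.

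However, your first paragraph contains a genuine misstatement. At $\bc=\mathbf{0}$ the algebra $\SHd(\Lambda_{\mathbf 0})$ is \emph{not} isomorphic to $\bbc_n[x_1,\ldots,x_d]\rrtimes\bbc\fraksd$ as an associative algebra, and $\Psi_d$ does \emph{not} coincide with $\bar\psi_d$. In $\SHd(\Lambda_{\mathbf 0})$ the relation $x_{i+1}s_i - s_ix_i = 1$ persists (only $x_1^n=0$ is imposed, and e.g. $x_2 = s_1(x_1+1)s_1$ is not nilpotent of order $n$ under the obvious generator correspondence), whereas in $\bbc_n[x_1,\ldots,x_d]\rrtimes\bbc\fraksd$ one has $x_{i+1}s_i = s_ix_i$ and $x_i^n=0$ for all $i$. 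These two are related only as a filtered algebra and its associated graded (Lemma \ref{ze}), not by an isomorphism. Likewise, by Lemma \ref{action} the operator $\Psi_d(x_1)$ on $V^{\otimes d}$ contains correction terms $\sum (-1)^{|i_t|}v_{\bfi(1\,t)}$ beyond the simple $\bar\psi_d(x_1)=e\otimes\bfone^{\otimes (d-1)}$; equality holds only at the level of $\grsf$. Fortunately this error is in an unnecessary embellishment: to deduce Theorem \ref{thm:dcthm} from Theorem \ref{thm: concluding} one only needs to set $\bc=\mathbf{0}$, without any further identification of $\SHd(\Lambda_{\mathbf 0})$ with the twisted polynomial algebra. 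The claim should simply be dropped; with it removed the proposal aligns with the paper.
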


The proof of the above theorem will be given in the concluding section \S\ref{pf
last}.  It is worthwhile mentioning that according to some anonymous  referee's statements, there is an alternative proof via some work on categorifications as in \cite{BLW} and \cite{BG}.

\subsection{The structure of this paper} The paper is organized as follows. In
\S\ref{sec 1} we  introduce some preliminary material.
	\S \ref{sec 4} is devoted to the proof of the super Vust theorem (Theorem
\ref{thm Vust0}). In \S\ref{sec 3}, most of the time we
specially take $e$ to be principal nilpotent. In order to prove the first equality
in Theorem \ref{doublecentralizerin}, we need to compute the centralizer of
$\mathbb{C}_{n}[x_{1},\ldots,x_{d} ]\rrtimes \mathbb{C}{\fraksd}$ in
$\End_\bbc(V^{\otimes d})$. For this, we will work with the distribution space of
$M_e$ ($M_e$ is the ground space of $\ggg_e$), and identify $U(\ggg_e)$ with
$\Dist(M_e)$. By analysing the comodule structure of $\bbc[M_e]$ on $V^{\otimes
d}$, we finally verify the first equality of Theorem \ref{doublecentralizerin}.
In \S\ref{sec 5}, we show that $U(\ggg_e)$ is actually a contraction of the
corresponding finite $W$-superalgebra.
In \S\ref{sec: skryabin}, with the aid of Skryabin's equivalence we will analyse the  representations of $W_\chi$ on $V_\bc^{\otimes d}$.
In the concluding section \S\ref{6}, we will introduce
the degenerate affine Hecke algebras, introduce the filtered deformation of
the double centralizer property, and finally accomplish the proof of Theorem
\ref{thm:dcthm}.

\subsection{Acknowledgments}   The authors thank Husileng Xiao for his valuable comments. They express their thanks to  anonymous referees for helpful comments which enable the authors to improve the manuscripts.

\section{Preliminaries}\label{sec 1}
Throughout the paper, we always assume  $V=\bbc^{m|n}$ unless otherwise specified,
which is a vector superspace with $V=V_\bz\oplus V_\bo$, and the even part and
odd part are isomorphic to $\bbc^m$ and $\bbc^n$ respectively. The general linear Lie
supergroup $\GL(V)$ and its Lie superalgebra $\gl(V)$ are simply written as
$\GL(m|n)$ and $\gl(m|n)$ respectively.  We will list some basic material on
them, for which the reader can refer to \cite{CCF},  \cite{Man} or
\cite[\S6]{M1}. 
Note that $\gl(m|n)\cong \gl(n|m)$ (see \cite[Remark 1.6]{CW}).  Throughout the
paper, we always assume that $m\leq n$ without any loss of generality.

Generally, for a $\bbc$-vector superspace $M=M_\bz\oplus M_\bo$, we denote by
$|m|\in\bbz_2$ the parity of a $\bbz_2$-homogeneous vector $m\in M_{|m|}$.
Throughout the paper, the terminology of ideas, subalgebras, modules, etc., of a
Lie superalgebra instead of superideals, subsuperalgebras, supermodules, etc., are
adopted.

\subsection{General linear Lie superalgebras and their (even) nilpotent
elements}\label{gl and nilpotents}
For $V=\bbc^{m|n}$, we have the $\bbz_2$-grading $V=V_\bz\oplus V_\bo$ with the
even part $V_\bz\cong \bbc^m$ and the odd part $V_\bo\cong \bbc^n$.  Choose
ordered bases for $V_\bz$ and $V_\bo$ that combine to a homogeneous ordered basis
of $V$. We will make it a convention to parameterize such a basis by the set
$I(m|n)=\{\bar 1,\ldots, \overline{m}; 1,\ldots,n\}$ with total order $\{\bar
1<\cdots<\overline{m}<1<\cdots<n\}$ (where $I(m|n)$ will be simply written as $I$
afterwards). The elementary matrices are accordingly denoted by
$e_{i,j}$ corresponding to $(i,j)$-entry ($i,j\in I(m|n)$), which stands for  the
matrix of all zeros in any entry with exception $(i,j)$-entry equal to
$1$. With respect to such an ordered basis, $\gl(V)$ can be  realized as $\gl(m|n)$ consisting of
$(m+n)\times (m+n)$ complex matrices of the block form
 $e=\begin{pmatrix} \bfa&\bfb\\ \bfc&\bfd\end{pmatrix}$, where $\bfa,\bfb,\bfc$ and $\bfd$ are
 respectively $m\times m$, $m\times n$, $n\times m$, and $n\times n$ matrices.
 Furthermore, $\gl(V)=\gl(V)_\bz\oplus \gl(V)_\bo$ with the even part
 $\gl(V)_\bz=\begin{pmatrix} \bfa&\bf0\\ \bf0&\bfd\end{pmatrix}$, and the odd part
 $\gl(V)_\bo=\begin{pmatrix} \bf0&\bfb\\ \bfc&\bf0\end{pmatrix}$. In particular,
 $\gl(V)_\bz\cong \gl(m)\oplus \gl(n)$.

The general linear Lie supergroup $\GL(m|n)$ can be defined, via the setting-up of affine supergroup schemes,  which is  a functor sending a commutative $\bbc$-superalgebra $A$ to the group
$\GL(m|n;A)$ of all invertible $(m+n)\times (m+n)$ matrices of the form
\begin{align}\label{super gp sch}
g=\begin{pmatrix} \bfa&\bfb\\ \bfc&\bfd\end{pmatrix},
 \end{align}
   where $\bfa$ and $\bfd$ are $m\times m$ and $n\times n$ matrices,
   respectively, with entries in $A_\bz$; $\bfb$ and $\bfc$ are $m\times n$ and
   $n\times m$ matrices, respectively, with entries in $A_\bo$. It is well-known
   (cf. for example, \cite{FioGav} or \cite{Man}) that $g$ is invertible if and
   only if both $\bfa$ and $\bfd$ are invertible. The Lie superalgebra of
   $\GL(m|n)$  is just  $\gl(m|n)$ (cf. \cite{FioGav} or \cite{Man}).

Let $\Mat(m|n)$ be the affine superscheme with $\Mat(m|n;A)$ consisting of all
$(m+n)\times (m+n)$ matrices of the form (\ref{super gp sch}). Then the affine
coordinate superalgebra $\bbc[\GL(m|n)]$ is the localization of $\bbc[\Mat(m|n)]$
at  the function  $\det: g\mapsto \det(\bfa)\det(\bfd)$. A closed subgroup of
$\GL(m|n)$ is an affine supergroup scheme with coordinate superalgebra being a
quotient of $\bbc[\GL(m|n)]$  by a Hopf ideal $J$. In particular, the underling
purely even group of $\GL(m|n)$, denoted by $\GL(m|n)_\ev$, corresponds to the
Hopf ideal $\bbc[\GL(m|n)]\bbc[\GL(m|n)]_\bo$. That is to say,  $\GL(m|n)_\ev$ is
completely determined by its coordinate superalgebra $\bbc[\GL(m|n)]\slash
\bbc[\GL(m|n)]\bbc[\GL(m|n)]_\bo$. It is well-known that $\GL(m|n)_\ev\cong
\GL(m)\times \GL(n)$ (cf. \cite{FioGav} or \cite{Man}).

\subsubsection{Nilpotent orbits}\label{nilp orb}
From now on, we set $\ggg=\gl(m|n)$ and denote  by $G$  the corresponding Lie supergroup $\GL(m|n)$. For an (even) nilpotent element $e\in \ggg_\bz$, and any
of its conjugate element $e'=\ad(g)e$ under the adjoint action of $g\in G_\ev$, we have $\ggg_e\cong \ggg_{e'}$.
Both of them share the same double centralizer property in $\End_{\bbc}(\Votd)$.
So we can choose nilpotent matrices of Jordan standard forms, as typical
representatives in their $G_\ev$-adjoint orbits for the study of the double
centralizer property we are concerned.

Recall that all nilpotent orbits under $G_\ev$ in $\ggg_\bz$ are parameterized by
all partition $(\lambda,\mu)$ of $(m|n)$, which means that both sequences
$\lambda$ and $\mu$ of positive integers with
$\lambda=(\lambda_1,\lambda_2,\ldots,\lambda_s)$ for a certain $s$, and
$\mu=(\mu_1,\mu_2,\ldots,\mu_t)$ for a certain $t$ satisfy the condition that
$\lambda_1\geq \lambda_2\geq \cdots \geq\lambda_s >0$, and
$\mu_1\geq\mu_2\geq\cdots \geq\mu_t>0$ along with
$|\lambda|:=\sum_{i}\lambda_i=m$ and $|\mu|:=\sum_{j}\mu_j=n$.
(The order of a partition may be defined in a reversed one, for which the components go
increasing.)
\subsubsection{Principal  nilpotent elements}
\begin{definition}\label{def: principal nilpotent}
 A nilpotent element	 $e\in \ggg_\bz$ is said to be principal or regular (see \cite{Jan})  if
the adjoint orbit $G_{\ev}. e$ has the largest  dimension among all
$G_{\ev}$-orbits of the nilpotent cone which consists of all nilpotent elements in $\ggg_\bz$.
\end{definition}
Recall that all principal  nilpotent elements in $\ggg$ are  in the same
$G_\ev$-orbit (see, e.g. the remark below \cite[Theorem 4.1.6]{CM}). We choose a standard representative $e$ of principal nilpotent
elements as below (which is called a canonical principal nilpotent element):
$$e = \sum_{i=1}^{m-1}e_{\bar{i},\overline{i+1}}+\sum_{j=1}^{n-1}e_{j ,j+1}.$$

\subsection{Schur-Sergeev duality}
 For a natural $\ggg$-module $V=\mathbb{C}^{m|n}$,  the tensor product module
 $V^{\otimes d}$ over $U(\ggg)$ (represented by $\phi_{d}$)  is  defined as in
 (\ref{eq: superaction0}).
The symmetric group  ${\fraksd}$  acting on  $V^{\otimes d}$ is defined as in
(\ref{super symaction}).

\begin{theorem} \label{ss dual}(the Schur-Sergeev duality,  see \cite{Ser} or \cite{CW}) The
images of $\phi_d$ and $\psi_d$ satisfy the double centralizer property, i.e.,
\begin{align*}
	\phi_{d}(U(\ggg)) &= \text{End}_{\fraksd}(V^{\otimes d}),\\
	\text{End}_{U(\ggg)}(V^{\otimes d})&= \psi_{d}({\fraksd}).
\end{align*}
\end{theorem}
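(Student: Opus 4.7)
The plan is to follow the standard double-centralizer strategy, adapted to the super setting via Koszul sign conventions.

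First I would verify that the two actions commute, i.e.\ $\phi_d(U(\ggg))\subseteq \operatorname{End}_{\fraksd}(V^{\otimes d})$ and $\psi_d(\bbc\fraksd)\subseteq \operatorname{End}_{U(\ggg)}(V^{\otimes d})$. Because the coproduct action (\ref{eq: superaction0}) inserts a Koszul sign whenever $X$ jumps past an odd tensor factor, and because $\tau$ in (\ref{super symaction}) carries a sign $(-1)^{|v||w|}$, one checks on adjacent transpositions $s_i$ that the two signs cancel exactly. Since $\fraksd$ is generated by adjacent transpositions and $\ggg$ generates $U(\ggg)$, this gives the commutation on the nose.

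Next, the heart of the argument is to prove the first equality $\phi_d(U(\ggg))=\operatorname{End}_{\fraksd}(V^{\otimes d})$. I would use the super-isomorphism $\operatorname{End}(V^{\otimes d})\cong \operatorname{End}(V)^{\otimes d}$ (with Koszul signs) under which the symmetric-group action on the left becomes the super-permutation action on the right. Its invariants are, by definition, the super-symmetric tensors $S^d(\operatorname{End}(V))\subseteq \operatorname{End}(V)^{\otimes d}$. On the other hand, the image of $\phi_d$ is the subalgebra generated by the iterated coproducts $\Delta^{(d-1)}(X)=\sum_{i=1}^{d}\bfone^{\otimes(i-1)}\otimes X\otimes \bfone^{\otimes(d-i)}$ (with signs) as $X$ ranges over $\ggg=\operatorname{End}(V)$. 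A super-polarization argument (take products of these divided powers and read off multilinear components) shows that these generate exactly $S^d(\operatorname{End}(V))$. This is the super-analogue of the first fundamental theorem of invariant theory for $\GL(m|n)$; equivalently, one can run the standard Schur/double-centralizer argument after invoking the complete reducibility of the polynomial $\GL(m|n)$-module $V^{\otimes d}$ and the existence of the hook-Schur decomposition (cf.\ \cite{CW}).

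Finally, the second equality $\operatorname{End}_{U(\ggg)}(V^{\otimes d})=\psi_d(\fraksd)$ follows purely formally from the first by the classical double commutant theorem, applied to the semisimple algebra $\bbc\fraksd$ acting on $V^{\otimes d}$ via $\psi_d$. Indeed $V^{\otimes d}$ is a semisimple $\fraksd$-module, hence
\[
\psi_d(\bbc\fraksd)=\operatorname{End}_{\operatorname{End}_{\fraksd}(V^{\otimes d})}(V^{\otimes d})
=\operatorname{End}_{\phi_d(U(\ggg))}(V^{\otimes d}),
\]
where the last equality uses Step~2; injectivity of $\psi_d$ for $d\le m+n$ (and a separate argument, or passage to the quotient of $\bbc\fraksd$ by its annihilator, for $d>m+n$) then yields the stated equality as subalgebras of $\operatorname{End}_\bbc(V^{\otimes d})$.

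The main obstacle is Step~2: the super-polarization/first-fundamental-theorem step, where one must bookkeep the Koszul signs carefully and confirm that the image of $U(\ggg)$ fills up all super-symmetric tensors (and not merely a graded subspace). The commutation check and the final double-commutant deduction are then essentially formal.
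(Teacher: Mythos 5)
The paper does not prove this theorem itself; it simply quotes it with a citation to Sergeev and Cheng--Wang, so there is no ``paper proof'' to compare against. Your sketch reproduces the standard route found in those references: commutation of the two actions, identification of $\operatorname{End}_{\fraksd}(V^{\otimes d})$ with the super-symmetric tensors $S^d(\operatorname{End}(V))$ via the sign-twisted isomorphism $\operatorname{End}(V^{\otimes d})\cong\operatorname{End}(V)^{\otimes d}$, a super-polarization / first-fundamental-theorem argument to show $\phi_d(U(\ggg))$ fills out that centralizer, and then the double commutant theorem applied to the semisimple algebra $\bbc\fraksd$. That is exactly the argument in \cite{Ser} and \cite[Chapter 3]{CW}, and the key input you correctly flag --- complete reducibility of the polynomial $\gl(m|n)$-module $V^{\otimes d}$ together with the hook-Schur (Sergeev--Berele--Regev) decomposition --- is what makes the double commutant step legitimate despite the non-semisimplicity of $\gl(m|n)$-mod in general. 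One small remark: your concern about injectivity of $\psi_d$ for $d>m+n$ is moot, since the equality $\operatorname{End}_{U(\ggg)}(V^{\otimes d})=\psi_d(\fraksd)$ asserts only equality of images (subalgebras of $\operatorname{End}_\bbc(V^{\otimes d})$), not faithfulness; no extra argument is needed there.
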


\subsection{Degenerate affine Hecke algebras in the super case} Now we introduce the so-called degenerate affine  Hecke  algebra (dAHA for short) in the super case,  which are actually the same as  defined in  \cite{BKl}. However, the action on the tensor product of superspaces is different from that one from \cite{BKl}, where the former  is introduced by  Sergeev. Below we will make use the notation $\SHl$ for such an algebra, instead of the usual one.

\begin{definition}\label{def: dasha}
The degenerate affine Hecke algebra $\SHl$  is an associative algebra with generators $x_i$ $(i
 =1,\ldots, \ell)$   and  $s_i$ $(i=1,\ldots, \ell-1)$, subjected to relations:
\begin{eqnarray*}
s_i^2 =1,\quad s_i s_{i+1} s_i &=& s_{i+1} s_i s_{i+1}, \nonumber\\
s_i s_j &=& s_j s_i,\quad |i-j|>1, \nonumber \\
x_j s_i &=& s_i x_j, \quad (j \neq i,i+1), \nonumber \\
x_{i+1} s_i -s_i x_i &=& 1, \label{Hecke} \\
x_i x_j &=& x_j x_i, \quad (i \neq j). \nonumber
\end{eqnarray*}
\end{definition}

\vskip10pt
\section{Super Vust theorem}\label{sec 4}
Keep the notation $V=\bbc^{m|n}$. For   $A\in
\text{End}_\bbc(V)$ whose centralizer subgroup in $G=\GL(m| n)$ is denoted
by $G_{A}$,  the classical Vust  theorem says as introduced in \S\ref{se:1}, that
on $\Votd$, the action of $G_A$ and the twisted action of symmetric group
$\fraksd$ presented as the algebra $\bbc_l[x_{1},\ldots,x_{d}]\rrtimes
\bbc\fraksd$  are double-centralizers in $\End_\bbc(V^{\otimes d})$ (see
\cite{Vust} or \cite{KP}).

In this section, we will prove the super Vust theorem (Theorem \ref{thm Vust0}),
taking the same strategy as in the classical case (see \cite[\S6]{KP}).

\subsection{Preparation for the proof of Theorem \ref{thm Vust0}}

\subsubsection{Adjoint orbits}
 Generally, for an affine algebraic supergroup $G$, the adjoint action $\Ad$ of $G$ on $\ggg:=\text{Lie}(G)$ can be naturally defined (for example, see  \cite[\S11.5]{CCF} or \cite[\S6]{M1}  ). Denote by $\Ad$ as usual the adjoint action. Then $\Ad$ is defined as the natural
transformation $\Ad: G\rightarrow \GL(\ggg)$.

In particular, take $G:=\GL(m|n)$ and $\ggg=\gl(m|n)$ over $\bbc$. Let $e$ be any given nilpotent element with $e\in\ggg_{\bz}$ (keep in mind that all the nilpotent elements considered in the paper are supposed to be even). Under the action of $\Ad$, $G.e$ can be regarded a super scheme of $\ggg$ defined as
\begin{align}\label{eq: orbit map}
 G.e(R)=\{\Ad(g)(e)=geg^{-1}\mid g\in G(R) \}
\end{align}
for any commutative $\bbc$-superalgebra $R$.

\subsubsection{}\label{sec: general supersch}
     Let us roughly recall some basic material on superschemes.  The reader may refer to \cite{CCF} or \cite{Man} for details.

     A superspace $S$ is a super ringed space $(\shortmid S\shortmid, \co_S)$, i.e., a
topological space $\shortmid S\shortmid$ endowed with a sheaf of commutative super rings $\co_S$ (the
structure sheaf of $S$),  with the property that the
stalk $\co_{S;x}$ is a local super ring for all $x\in \shortmid S\shortmid$.
 A superscheme can be equivalently defined as a superspace $X=(\shortmid X \shortmid,\co_X)$ such that $(\shortmid X\shortmid,\co_{\shortmid X\shortmid}:=(\co_X)_\bz)$ is an ordinary  scheme  and $(\co_{X})_\bo$ is a quasi-coherent sheaf of $(\co_{X})_\bz$ (see \cite[\S3.3]{CCF}).
 More precisely,  the sheaf of superalgebras $\co_X$ is a sheaf of $(\co_X)_\bz$-modules, where $(\co_X)_\bz$ is the sheaf
over $\shortmid X\shortmid$ defined as $\co_X(U)_\bz$ for all $U$ open in $\shortmid X\shortmid$. The
sheaf $(\co_X)_\bo$ defined as $(\co_X)_\bo(U)=\co_X(U)_\bo$, is also a sheaf of $(\co_X)_\bz$-modules.
 Set $\bbc[X]:=\co_{X}(\shortmid X\shortmid)$.

  Recall that a superscheme $Y$ is a closed subscheme of a superscheme  $X$
       if $\shortmid Y\shortmid$ is a closed subset of $\shortmid X \shortmid$ and  $\co_Y=\co_X\slash \mathcal{I}$ where $\mathcal{I}$ is a quasi-coherent sheaf of ideals in $\co_X$.

   Now let us turn to nilpotent orbits of supergroups.
  A general setting-up for  adjoint orbits can be referred to   \cite[\S6.2]{M1}.
The orbit closure  $\overline{G.e}$   can be defined as below.
The orbit map $g\mapsto g.e:=\Ad(g)e$ as in (\ref{eq: orbit map}) gives rise to a natural transformation of functors $\mu:G\rightarrow G.e\subseteq\ggg$. By Yoneda's lemma (see \cite[Lemma 3.4.3]{CCF}), $\mu$ induces an algebra homomorphism $\mu^*:\bbc[G.e]\rightarrow \bbc[G]$, and $\overline{G.e}$ is the closed subscheme of $\ggg$ defined
by the ideal $\ker \mu^*$.
By definition, there is an embedding $$\bbc[\overline{G.e}]\hookrightarrow \bbc[G.e].$$

  Note that $\shortmid G\shortmid= G_\ev\cong\GL(m)\times \GL(n)$ is a connected algebraic group. So both $G_\ev.e$ and $\overline{G_\ev.e}$ are irreducible varieties in $\ggg_\bz$. Denote by $\text{Frac}(\bbc[G_\ev.e])$ (resp. $\text{Frac}(\bbc[\overline{G_\ev.e}])$) the fractional field of $\bbc[G_\ev.e]$ (resp. $\bbc[\overline{G_\ev.e}]$). Then we have $\text{Frac}(\bbc[G_\ev.e])=\text{Frac}(\bbc[\overline{G_\ev.e}])$ since $G_\ev.e$ is an open subset in $\overline{G_\ev.e}$ (see \cite{CM} or \cite{Jan}).

   Now set $Y=G.e$ and $Z=\overline{G.e}$.  Thanks to \cite[Theorem 6.8]{M1}, $\dim \shortmid Z\shortmid=\dim \shortmid Y\shortmid=\dim\shortmid G.e\shortmid=\dim G_\ev.e$. By the above arguments,  we have the following consequences:
  \begin{align}\label{eq: closure frac}
  \text{Frac}(\bbc[\shortmid Z\shortmid])=\text{Frac}(\bbc[\overline{G_\ev.e}])=\text{Frac}(\bbc[G_\ev.e])=\text{Frac}(\bbc[\shortmid Y\shortmid]).
  \end{align}

 \subsubsection{Fractional algebras} We introduce the fractional superalgebras of $\bbc[Y]$ and of $\bbc[Z]$ for $Y=G.e$ and $Z=\overline{G.e}$ respectively.
    { By the arguments in \S\ref{sec: general supersch}, we can set}
\begin{align}\label{eq: frac}
&\text{Frac}(\bbc[Y]):=\text{Frac}(\bbc[\shortmid Y\shortmid])\otimes_{\bbc[ Y]_\bz}\bbc[Y];\cr
&\text{Frac}(\bbc[Z]):=\text{Frac}(\bbc[\shortmid Z\shortmid])\otimes_{\bbc[ Z]_\bz}\bbc[Z].
\end{align}

\begin{lemma}\label{lemma3}
	Let  $G= \GL(m|n)$ and $\ggg=\gl(m|n)$, and let $e\in \ggg_{\bz}$  be a nilpotent element. Then the following statements hold.
 \begin{itemize}
 \item[(1)] $\text{Frac}(\bbc[\overline{G.e}])=\text{Frac}(\bbc[{G.e}])$.
\item[(2)] $\mathbb{C}[\overline{G.e}]=\mathbb{C}[G.e]$.
\end{itemize}
\end{lemma}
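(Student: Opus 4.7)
The plan is to prove part (1) first, as a fairly direct consequence of the openness of $G_\ev.e$ in $\overline{G_\ev.e}$ combined with equation (\ref{eq: closure frac}), and then to leverage (1) in order to deduce part (2) via a normality and Hartogs-type extension argument.

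For part (1), I would unpack the fractional superalgebras of $Y = G.e$ and $Z = \overline{G.e}$ using the definitions in (\ref{eq: frac}). The even component of each becomes the common field $K := \text{Frac}(\bbc[\shortmid Y\shortmid]) = \text{Frac}(\bbc[\shortmid Z\shortmid])$, which is exactly (\ref{eq: closure frac}). The odd component of $\text{Frac}(\bbc[Y])$ is $K \otimes_{\bbc[Y]_\bz} \bbc[Y]_\bo$, and analogously for $Z$. I would then show that the natural map
\[
K \otimes_{\bbc[Z]_\bz} \bbc[Z]_\bo \longrightarrow K \otimes_{\bbc[Y]_\bz} \bbc[Y]_\bo,
\]
induced by the embedding $\bbc[Z] \hookrightarrow \bbc[Y]$, is an isomorphism. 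For surjectivity, given $f \in \bbc[Y]_\bo$, one exhibits an even element $h \in \bbc[Z]_\bz$ that is nonvanishing on a dense open subset of $\shortmid Y\shortmid$ and satisfies $hf \in \bbc[Z]_\bo$; this uses that $\shortmid Y\shortmid$ is open in $\shortmid Z\shortmid$ so that the obstruction to extension is supported on a lower-dimensional locus, which becomes invertible in $K$. Injectivity follows from flatness of $K$ over $\bbc[\shortmid Y\shortmid]$.

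For part (2), I would realize $\bbc[\overline{G.e}]$ as the image of $\mu^*:\bbc[\ggg] \to \bbc[G]$ and $\bbc[G.e]$ as the algebra of regular sections on the locally closed super orbit. Granted (1), any $\varphi \in \bbc[G.e]$ already lies in $\text{Frac}(\bbc[\overline{G.e}])$, so the task reduces to showing that every such $\varphi$ extends across the boundary $\overline{G.e} \setminus G.e$. This is a super version of the classical fact that on a normal variety, regular functions on an open subset whose complement has codimension at least two extend uniquely. The odd component is handled by viewing $\bbc[G.e]_\bo$ as a quasi-coherent module over $\bbc[G.e]_\bz$ and pushing the extension through the same Hartogs mechanism, exploiting that the odd sheaf is finitely generated over the even one.

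The main obstacle is controlling the extension across the boundary. On the even side one can invoke normality of classical nilpotent orbit closures in $\gl(m)$ and $\gl(n)$ together with the codimension of the lower orbits; on the odd side the more delicate point is to rule out odd functions on $G.e$ that fail to extend to $\overline{G.e}$, which forces a careful tracking of how the orbit map $\mu$ produces the odd part of the structure sheaf from the $G$-action with its $\bigwedge$-valued points. Together with (1), this should close the gap and yield the equality $\bbc[G.e] = \bbc[\overline{G.e}]$.
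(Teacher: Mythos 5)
Your proposal takes a genuinely different route from the paper's. The paper does not invoke any Hartogs-type extension for the odd parts: in its proof of (1) it notes that $\bbc[Z]$ is a subalgebra of $\bbc[Y]$ and then argues that the nested odd components $\bbc[Z]_\bo\subseteq\bbc[Y]_\bo$ have the same finite dimension, hence are literally equal; combined with \eqref{eq: closure frac} for the even part, the fraction-ring equality in (1) drops out at once. That outright equality of odd parts is then reused verbatim in (2), so the only boundary extension the paper ever needs is on the purely even side, where normality of $\overline{G_\ev.e}$ (Kraft--Procesi) gives $\bbc[G_\ev.e]=\bbc[\overline{G_\ev.e}]$, and the identification $\shortmid\overline{G.e}\shortmid=\overline{G_\ev.e}$ finishes the job. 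Your strategy, by contrast, establishes only birational equivalence in step (1) and defers the substantive work to step (2), where you push a codimension-two extension through both the even and the odd parts.

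The genuine gap in your proposal is the odd-part Hartogs step in (2). Finite generation of $\bbc[G.e]_\bo$ over $\bbc[G.e]_\bz$ is not sufficient for extension across the boundary: one would need the odd component of the structure sheaf of $\overline{G.e}$ to be a reflexive (or $S_2$) module over $\bbc[\overline{G_\ev.e}]$, and you nowhere establish that. Moreover, the surjectivity device you sketch in (1) --- producing $h\in\bbc[Z]_\bz$ with $hf\in\bbc[Z]_\bo$ --- already presupposes a weak form of the very extension result you set out to prove in (2), introducing a circularity; in fact the fraction-ring equality in (1) follows more directly from the observation that $\shortmid Y\shortmid$ is a dense open of the irreducible scheme $\shortmid Z\shortmid$, so the generic stalks of both structure sheaves (including the odd parts) coincide, with no $hf$ maneuver required. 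To close the gap in (2), either prove the needed reflexivity of the odd sheaf on $\overline{G.e}$, or follow the paper's route and argue for $\bbc[Z]_\bo=\bbc[Y]_\bo$ directly by a dimension comparison, after which only the even-part normality is needed.
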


\begin{proof} (1) Note that the superscheme $G.e$ has super dimension
$$\underline{\dim} G.e=(\dim G_\ev.e, \dim\ggg_\bo-\dim(\ggg_\bo)_e)$$ (see  \cite[Theorem 6.8]{M1}), and the superscheme $\overline{G.e}$ has the same dimension as the previous one. Keep the notations as in \eqref{eq: frac}.
 Note that $\bbc[Z]$ is a subalgebra of $\bbc[Y]$. By definition, $\bbc[Z]_\bz=\bbc[\shortmid Z\shortmid]$, $\bbc[Y]_\bz=\bbc[\shortmid Y\shortmid]$.
 Moreover, both $\bbc[Z]_\bo$ and $\bbc[Y]_\bo$ are finite-dimensional, with the same dimension. Thus $\bbc[Z]_\bo=\bbc[Y]_\bo$. Now the statement follows from \eqref{eq: closure frac}.

  (2) According to the nilpotent orbit theory of general linear Lie algebras, the
closure of the nilpotent $G_{\ev}.e$  is normal (see \cite{Jan} or \cite{KP}). So
$\mathbb{C}[\overline{G_\ev.e}]=\mathbb{C}[G_\ev.e]$.
By definition, $\shortmid \overline{G.e}\shortmid$ is an irreducible closed subset in $\ggg_\bz$.
    Now the superscheme $\overline{G.e}$ has super dimension $(\dim G_\ev.e, \dim\ggg_\bo-\dim(\ggg_\bo)_e)$, containing  a closed subset $\overline{G_\ev.e}$.  It is deduced that $\shortmid\overline{G.e}\shortmid=\shortmid\overline{G_\ev.e}\shortmid$.
So, $\bbc[Z]_\bz=\mathbb{C}[\overline{G_\ev.e}]=\mathbb{C}[G_\ev.e]=\bbc[Y]_\bz$.
It is already known above that $\bbc[\overline{G.e}]_\bo=\bbc[{G.e}]_\bo$. Consequently, $\mathbb{C}[G.e] =
\mathbb{C}[\overline{G.e}]$.
\end{proof}

\subsubsection{} For a given $e\in\ggg_\bz$, consider the centralizer supergroup $G_e$ of $e$ in $G$, as a closed
subgroup of $G$ (see \cite[\S11.8]{CCF}, or see \cite[\S{I}.2.6]{Jan0} as an analogue of group schemes). The following fact
is clear, as an analogue of the classical result (see \cite[Proposition 10.6]{Hum}).
\begin{lemma}\label{lie of centralizer} Keep the notations as in Lemma
\ref{lemma3}. Then $\text{Lie}(G_e)=\ggg_e$.
\end{lemma}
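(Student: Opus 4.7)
The plan is to identify $\text{Lie}(G_e)$ with the tangent space at the identity of the closed subsupergroup $G_e \subset G$ using the functor of points over super dual numbers, and then to read off the centralizer condition by a first-order expansion. For any affine supergroup scheme $H$ over $\bbc$, the Lie superalgebra $\text{Lie}(H)$ is recovered as the kernel of the natural projection $H(\bbc[\epsilon_\bz,\epsilon_\bo]) \to H(\bbc)$, where the base is the super-commutative algebra generated by $\epsilon_\bz,\epsilon_\bo$ with $\epsilon_\bz^2=\epsilon_\bz\epsilon_\bo=\epsilon_\bo^2=0$ and $|\epsilon_\bz|=\bz$, $|\epsilon_\bo|=\bo$; the $\epsilon_\bz$- and $\epsilon_\bo$-components of a kernel element pick out its even and odd parts respectively.

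Applied to $G=\GL(m|n)$, this recovers $\text{Lie}(G)=\ggg$: an arbitrary element of the kernel has the form $g=\id+\epsilon_\bz X_\bz+\epsilon_\bo X_\bo$ with $X_\bz\in\ggg_\bz$ and $X_\bo\in\ggg_\bo$, and $g$ is automatically invertible since it reduces to $\id$ modulo the nilpotent ideal $(\epsilon_\bz,\epsilon_\bo)$. The closed subsupergroup $G_e$ is cut out functorially by the equation $\Ad(g)(e)=geg^{-1}=e$. Expanding to first order in the $\epsilon$'s yields
\begin{align*}
geg^{-1}=e+\epsilon_\bz[X_\bz,e]+\epsilon_\bo[X_\bo,e],
\end{align*}
so the defining condition reduces to $[X_\bz,e]=0$ together with $[X_\bo,e]=0$, i.e.\ $X_\bz+X_\bo\in\ggg_e$. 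This computation produces simultaneously both inclusions $\text{Lie}(G_e)\subseteq\ggg_e$ and $\ggg_e\subseteq\text{Lie}(G_e)$, yielding equality.

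The step that requires the most care is the bookkeeping of super signs when the scalar $\epsilon_\bo$ is odd: one must verify that the conjugation $geg^{-1}$ really does produce $[\epsilon_\bo X_\bo,e]=\epsilon_\bo[X_\bo,e]$ without parasitic signs from the super-commutativity of $\bbc[\epsilon_\bz,\epsilon_\bo]$. Because $e\in\ggg_\bz$ is even, no such sign appears, and the calculation goes through uniformly in both parities. The remaining bookkeeping is just the translation between the functorial definition of $\Ad:G\to\GL(\ggg)$ as a natural transformation of affine supergroup schemes and its tangent map at the identity, which is a standard feature of the setup recorded in \cite{CCF,M1} already used implicitly to obtain the Lie bracket on $\ggg$.
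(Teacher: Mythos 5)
Your proof is correct. The paper itself offers no argument here: it simply records the lemma as ``clear'' and cites \cite[Proposition 10.6]{Hum} as the classical analogue, so there is no internal proof to compare against. Your functor-of-points computation fills that gap directly. The identification of $\text{Lie}(H)$ with $\ker\bigl(H(\bbc[\epsilon_\bz,\epsilon_\bo])\to H(\bbc)\bigr)$ is legitimate: a $\bbz_2$-graded algebra map $\bbc[H]\to\bbc[\epsilon_\bz,\epsilon_\bo]$ lifting the counit must kill $\mathfrak{m}^2$ (all products of the two nilpotents vanish), and a parity-preserving linear map $\mathfrak{m}/\mathfrak{m}^2\to\bbc\epsilon_\bz\oplus\bbc\epsilon_\bo$ is exactly a pair of even and odd tangent vectors; equivalently, your two-generator base is the image of $(1,\epsilon,\epsilon\theta)$ inside the usual $\bbc[\theta][\epsilon]$ construction from \cite{CCF}. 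The first-order expansion of $geg^{-1}$ is handled correctly, and your sign remark is the right one: since $e\in\ggg_\bz$ has entries in $\bbc$, the odd scalar $\epsilon_\bo$ commutes with $e$, so no super sign intervenes, and since $|e|=\bz$ the super bracket $[X_\bo,e]$ coincides with the plain commutator $X_\bo e-eX_\bo$ appearing in the expansion. What your approach buys over the paper's terse citation is that it makes visible why the super case reduces to the classical one, namely that the odd directions are captured by an odd nilpotent in the test algebra and the centralizer condition decouples parity by parity because $e$ is purely even.
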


\subsection{}\label{sec Vustproof}
    From now on till  \S\ref{end of pf th 4.1},  we will prove Theorem  \ref{thm
    Vust0}.  First of all, by  a direct computation it is easy to verify that
\begin{align}\label{lefthandside}
\text{End}_{\rho({\ggg_{e}})}(V^{\otimes d}) \supseteq\mathscr{A}.
\end{align}
We only need to prove the other direction.
    For the simplicity of the subsequent arguments, we will adopt some technique of formulations of supergroups which could be more familiar to some physicists  (see \cite{GZ}).

   \subsubsection{}
     Associated with an $N$-dimensional vector space over $\bbc$, one has the
  Grassmannian algebra which is only dependent on the degree $N$ up to
  isomorphism, denoted by $\bigwedge (N)$.
 Set $\bigwedge = {\varinjlim} \bigwedge (N)$, which becomes an
 infinite-dimensional commutative superalgebra over $\mathbb{C}$.

Set $\bfv: = V\otimes_\bbc \bigwedge$, $\bfv^{*} =
\text{Hom}_{\bigwedge}(\bfV, \bigwedge)$.
The general linear Lie supergroup on $\bfv$ can be defined as:
$$\GL(\bfv) \cong \{g\in\text{End}_{\bigwedge}(\bfv)_\bz\mid  g \text{ is invertible}
\}.$$
The algebraic Lie supergroup $\GL(m|n)$ will be identified with  $\bfG:=\GL(\bfv)$ in our arguments.

    Let $\widetilde{\gl(\bfv)}=\gl(V)\otimes_\bbc\bigwedge$ which is regarded as a
Lie superalgebra over $\bigwedge$ in the sense that there is a
$\bigwedge$-bilinear Lie bracket  on $\widetilde{\gl(\bfv)}$,  via $[X\otimes \lambda_1,
Y\otimes \lambda_2]=[X,Y]\otimes (-1)^{|\lambda_1||Y|}\lambda_1\lambda_2$ for $\bbz_2$-homogeneous $X,Y\in
\gl(\bfv)$ and $\lambda_1, \lambda_2\in \bigwedge$. Then
    $\gl(\bfv)=(\gl(V)\otimes_\bbc\bigwedge)_\bz$ forms a Lie subalgebra of
$\widetilde{\gl(\bfv)}$ over $\bigwedge_\bz$, which can be referred to as the Lie
algebra of $\GL(\bfv)$. The action of $\gl(\bfv)$ on $\bfv$ is exactly the restriction of
the one of $\widetilde{\gl(\bfv)}$ on $\bfv$, the latter of which is clear by its
definition.

    In this new formulation, we have the following basic fact.
    \begin{lemma}(\cite[Lemma 2.6]{LZ})\label{2.8.2}
The following hold:
\begin{itemize}
	\item[(1)] For any $X \in \mathfrak{gl}(\bfv)$, let  $\text{exp}(X): =
\sum_{r=0}^{\infty}\frac{X^ {r}}{r!}$. Then  $\text{exp}(X)$  is  well-defined as
an automorphism of $\bfv$ which lies in  $\GL(\bfv)$. Hence, there is a mapping
	$$\text{exp}:\mathfrak{gl}(\bfv)\rightarrow \GL(\bfv), X\longmapsto
\text{exp}(X).$$
	
	\item[(2)] The image of $\mathfrak{gl}(\bfv)$ under the map $\text{exp}$ is  $\GL(\bfv)$.
\end{itemize}
\end{lemma}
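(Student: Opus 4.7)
The plan is to reduce both parts of the lemma to classical results about $\GL(m,\bbc)\times\GL(n,\bbc)$ by exploiting the body-soul decomposition in the Grassmann algebra. Given $X\in\gl(\bfv)=(\gl(V)\otimes\bigwedge)_\bz$, only finitely many odd generators of $\bigwedge$ appear in the entries of $X$, so I would pass to a finite truncation $\bigwedge(N)$ containing all of them and view $\bfv$ as a free $\bigwedge(N)$-module of rank $m+n$. Write $X=X_0+X'$, where $X_0\in\gl(m)\oplus\gl(n)$ is the purely numerical body (block-diagonal by the evenness constraint) and $X'$ has every entry in the augmentation ideal $\bigwedge(N)^+$; since $\bigwedge(N)^+$ is nilpotent, $X'$ is a nilpotent $\bigwedge(N)$-linear endomorphism of $\bfv$.

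For part (1), I would expand $X^r/r!$ by the noncommutative binomial formula and group the terms by the number of $X'$-factors. Since $X'$ is nilpotent, only finitely many groupings contribute to any fixed monomial in the Grassmann generators, so the series $\sum_{r\geq 0}X^r/r!$ reduces, coefficient-by-coefficient in a $\bbc$-basis of $\bigwedge(N)$, to finitely many classically convergent series of the form $\sum_r p_r(X_0)/r!$ in $\gl(m)\oplus\gl(n)$; this defines $\exp(X)$. Invertibility then follows from $\exp(X)\exp(-X)=\bfone$, which holds by the usual termwise manipulation.

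For part (2), I would reduce $g\in\GL(\bfv)$ modulo the augmentation ideal to its body $g_0\in\GL(m)\times\GL(n)$, invoke the classical surjectivity of the complex matrix exponential to obtain $X_0\in\gl(m)\oplus\gl(n)$ with $\exp(X_0)=g_0$, and set $h:=g\exp(-X_0)-\bfone$. The entries of $h$ then lie in the augmentation ideal, so $Y:=\log(\bfone+h)=\sum_{r\geq 1}(-1)^{r+1}h^r/r$ is a finite sum defining an element of $\gl(\bfv)$ with $\exp(Y)=\bfone+h$. A nilpotent Baker-Campbell-Hausdorff argument then produces $Z\in\gl(\bfv)$ with $\exp(Z)=\exp(X_0)\exp(Y)=g$; this BCH series terminates because every iterated commutator involving $Y$ lies in $\gl(V)\otimes(\bigwedge(N)^+)^j$ for increasingly large $j$.

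The main obstacle I anticipate is verifying that these formal constructions respect the even-parity constraint, so that $\exp(X)\in\GL(\bfv)$ and $Z\in\gl(\bfv)$ rather than only in the larger $\widetilde{\gl(\bfv)}$. This reduces to the observation that $\gl(\bfv)$ is closed under both the associative product and the Lie bracket, but one must thread this through each step of the binomial expansion in part (1) and the BCH recombination in part (2). Everything else is a formal power-series identity in a nilpotent setting plus classical convergence of the complex matrix exponential, so the genuinely super ingredient is exactly this parity-preservation check.
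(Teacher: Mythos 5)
The paper does not prove this lemma; it is cited verbatim as \cite[Lemma 2.6]{LZ}, so there is no in-paper argument to compare against, and I assess only the correctness of yours. Your part (1) is sound: after truncating to $\bigwedge(N)$, the ring $\End_{\bigwedge(N)}(\bfv)$ is a finite-dimensional $\bbc$-algebra, so $\sum_r X^r/r!$ converges absolutely in any algebra norm and lands in the even, $\bigwedge(N)$-linear part; the parity point you raise is correctly settled by observing that the even part is closed under multiplication.

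Part (2), however, contains a genuine gap in the BCH step. You assert that the series for $\log(\exp(X_0)\exp(Y))$ terminates because ``every iterated commutator involving $Y$ lies in $\gl(V)\otimes(\bigwedge(N)^+)^j$ for increasingly large $j$.'' This is false for the terms $\ad(X_0)^k(Y)$, which contain exactly one $Y$ and therefore stay in Grassmann degree one for every $k$. Their contribution to the BCH series sums (formally) to $\tfrac{\ad X_0}{1-e^{-\ad X_0}}(Y)$, whose defining power series has radius of convergence $2\pi$ and diverges whenever $\ad X_0$ has an eigenvalue in $2\pi i\,\bbz\setminus\{0\}$, i.e.\ whenever two eigenvalues of $X_0$ differ by a nonzero multiple of $2\pi i$. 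Concretely, take $X_0 = \mathrm{diag}(2\pi i, 0)$ and $Y = \epsilon\, e_{12}$ with $\epsilon^2=0$: then $\exp(X_0)=\bfone$, $\ad(X_0)^k(Y)=(2\pi i)^k Y$, and the BCH series diverges, even though the target $g=\bfone+Y=\exp(Y)$ visibly has a logarithm. To repair the argument you either need to choose the classical logarithm $X_0$ of $g_0$ so that distinct eigenvalues of $g_0$ receive logarithms whose differences avoid $2\pi i\,\bbz\setminus\{0\}$ (making $\tfrac{1-e^{-\ad X_0}}{\ad X_0}$ invertible on the soul, after which one solves $\exp(X_0+W)=g$ for $W$ order by order in the Grassmann filtration), or, more simply, bypass BCH altogether with the holomorphic functional calculus in the finite-dimensional $\bbc$-algebra $\End_{\bigwedge(N)}(\bfv)_{\bz}$: the spectrum of $g$ there is a finite subset of $\bbc^\times$, one picks a single-valued branch of $\log$ holomorphic near it, and $Z:=\log(g)$ is automatically even and $\bigwedge(N)$-linear, hence in $\gl(\bfv)$, with $\exp(Z)=g$.
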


 \subsubsection{} In the next subsubsections, we investigate  $\mathscr{B}:=\text{End}_{\rho({G_{e}})}(\bfv^{\otimes d})$
(for simplicity we do not distinguish the notations of $\gl(\bfv)$-action and
$\GL(\bfv)$-action on $\bfvotd$),  which is exactly $\text{End}_{\rho
({\ggg_{e}})}(V^{\otimes d})\otimes \bigwedge$ by Lemma \ref{lie of
centralizer}. Note that $\mathscr{B}$ contains the subalgebra
$\text{End}_{\bigwedge}(\bfvotd)_\bz^{\bfG}$, the latter of which by
Schur-Sergeev duality,  is just the image {$\psi_d({\fraksd})$} in
$\text{End}_{\bigwedge}(\bfvotd)_\bz$ by base change. Then $\mathscr{B}$
contains {$\psi_d({\fraksd})$} and $e$, thereby contains the subalgebra generated by them. We need to decide the exact generators of   $\mathscr{B}$. For this, we first recall a general fact on the group action. Suppose $H$ is a group, and $X_1, X_2$ are $H$-sets. If the two points $x_1\in X_1$ and $x_2\in X_2$ satisfy the condition concerning the stabilizer subgroups $\text{Stab}_H(x_1)\subseteq \text{Stab}_H(x_2)$,
then one can define an $H$-equivariant map from $H.x_1\subseteq X_1$ to $H.x_2\subseteq X_2$ which sends $x_1$ onto $x_2$.

 We first have the following basic observation.

\begin{lemma}\label{lem: general extension of GP aciton} Suppose $H$ is a connected reductive algebraic group over $\bbc$, $e\in \text{Lie}(H)$ is a principal nilpotent element. Suppose $(\rho,M)$ is a finite-dimensional  $H$-module with $b\in \End_\bbc(M)$ satisfying $\text{Stab}_H(e)\subseteq \text{Stab}_H(b)$. Then there exists an $H$-equivariant morphism $\varphi$ from $\text{Lie}(H)$ to $\End_\bbc(M)$ such that $\varphi(e)=b$.
\end{lemma}
\begin{proof} Write $\calh=\text{Lie}(H)$. By the arguments above the lemma, there is an $H$-equivariant morphism $\varphi$ from $H.e\subseteq \calh$ to $H.b\subseteq \End_\bbc(M)$.
    Note that $e$ is a principal nilpotent, thereby $H.e$ is an open dense set in the nilpotent cone $\caln(\calh)$ consisting all nilpotent elements in $\calh$. We proceed with arguments by steps.

    (i) We first claim that $\varphi$ can be extended to be an $H$-equivariant morphism from $\caln(\calh)$ to $\End_\bbc(M)$.

In general, for $x\in\calh$ and $\co:=H.x$,  the orbit closure $\overline\co$ is $H$-stable
and we have an $H$-module structure  on $\bbc[\overline\co]$. Note that $\co$ is dense in $\overline\co$, and the restriction of functions
from $\overline\co$ to $\co$ is  $H$-equivariant and injective.
In particular, for principal nilpotent $x=e$ we have that $\overline\co=\caln(\calh)$ (see \cite[\S6.3]{Jan}) and $\overline\co$ is a normal variety (\cite[\S8.5]{Jan}). Furthermore, $\bbc[\overline\co]=\bbc[\co]$ (\cite[\S8.3]{Jan}). Hence $\bbc[\co]^H\otimes_\bbc \End_\bbc(M)=\bbc[\overline\co]^H\otimes_\bbc \End_\bbc(M)$.  On the other hand, if we denote by $\textsf{Mor}_H(\co,\End_\bbc(M))$ the space of all $H$-equivariant morphism from $\co$ to $\End_\bbc(M)$, then $$\textsf{Mor}_H(\co,\End_\bbc(M))=(\bbc[\co]\otimes_\bbc \End_\bbc(M))^H$$
(see the forthcoming Lemma \ref{lem: forthcoming}). The same thing happens for $\caln(\calh)=\overline\co$.
This shows that $\varphi$ becomes an $H$-equivariant morphism from $\caln(\calh)$ to $\End_\bbc(M)$.  Hence the claim is proved.

    (ii) On the other hand, $\caln(\calh)$ is an $H$-stable closed subvariety of the affine space $\calh$. Note that $\calh$ is an $H$-variety, and $\End_\bbc(M)$ is an $H$-module. Thanks to \cite[Lemma 6.2.1]{KP}, there exists an $H$-equivariant morphism from $\calh$ to $\End_\bbc(M)$  extending  the $H$-equivariant morphism $\varphi: \caln(\calh)\rightarrow \End_\bbc(M)$. Such an extended $H$-equivariant morphism is the desired one.

    The proof is completed.
\end{proof}
\begin{lemma}\label{lem: forthcoming} Keep the notations as in the proof of Lemma \ref{lem: general extension of GP aciton}. We have
\begin{align}
\textsf{Mor}_H(\co,\End_\bbc(M))=(\bbc[\co]\otimes_\bbc \End_\bbc(M))^H,\cr
\textsf{Mor}_H(\overline\co,\End_\bbc(M))=(\bbc[\overline\co]\otimes_\bbc \End_\bbc(M))^H.
\end{align}
\end{lemma}

\begin{proof} These facts follow from  the definition of $H$-equivariant morphisms.
\end{proof}

\subsubsection{} Now we turn back to the case of the action of  supergroups.

Recall that the category of supergroups is equivalent to the category of super Harish-Chandra pairs (see \cite[\S7.4]{CCF}). For example, $(G_\ev\cong\GL(m)\times \GL(n), \gl(m|n))$ is the super Harish-Chandra pair corresponding to $\GL(m|n)$. Generally, for a super Harish-Chandra pair $(\frakG_\ev,\calg)$ associated with an algebraic supergroup $\frakG$ and  $\calg=\Lie(\frakG)$, one can define a module structure on a vector superspace $M$ over $(\frakG_\ev,\calg)$, which means that $M$ is endowed with two structures of the rational $\frakG$-module and of the $\calg$-module, and that both are compatible. This is to say, for any $m\in M$, $g\in\frakG_\bz$, and $X\in\calg$, we have $g. (X.m)=\Ad g(X).m$.

\begin{lemma}\label{lemma4-(1)}
Let $e\in \ggg_{\bz}$  be a principal nilpotent element, and $\bfG=\GL(m|n)$.
For any  $b\in \text{End}_{\rho({\ggg_{e}})}(V^{\otimes d})\subseteq \mathscr{B}= \text{End}_{\rho({\ggg_{e}})}(\bfvotd)$, there is a  $\bfG$-equivariant morphism
	$$\Phi: \text{End}_{\bigwedge}(\bfV)\rightarrow
\text{End}_{\bigwedge}(\bfV^{\otimes d}) = \text{End}_{\bigwedge} (\bfV)^{\otimes d}$$
	such that  $\Phi(e) = b$.
\end{lemma}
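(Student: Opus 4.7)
The plan is to adapt the classical Kraft--Procesi stabilizer-orbit argument to the superscheme setting, exploiting the stabilizer principle recalled just before the lemma together with the Grassmannian thickening. First, I would view $\End_{\bigwedge}(\bfv)$ and $\End_{\bigwedge}(\bfv^{\otimes d})$ as affine $G$-superschemes under the conjugation action, and note that by Lemma \ref{lie of centralizer} the $G$-stabilizer of $e$ is exactly $G_e$. Next, the hypothesis $b\in\mathscr{B}$ says that $b$ commutes with $\rho(\ggg_e)$; combined with the surjectivity of the exponential in Lemma \ref{2.8.2}, this upgrades to the statement that $b$ is invariant under the conjugation action of $G_e$ on $\End_{\bigwedge}(\bfv^{\otimes d})$. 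Hence $\mathrm{Stab}_G(e)\subseteq \mathrm{Stab}_G(b)$.

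Applying the stabilizer principle quoted just before the lemma to the pair $e\in\End_{\bigwedge}(\bfv)$ and $b\in\End_{\bigwedge}(\bfv^{\otimes d})$ yields a $G$-equivariant morphism of superschemes $G.e\rightarrow\End_{\bigwedge}(\bfv^{\otimes d})$ sending $e\mapsto b$. By Lemma \ref{lemma3}(2), the coordinate superalgebras of $G.e$ and of $\overline{G.e}$ coincide, so this morphism extends canonically to a $G$-equivariant morphism on the orbit closure $\overline{G.e}\subset\End_{\bigwedge}(\bfv)$. The final step is to pass from the closed $G$-subsuperscheme $\overline{G.e}$ to the full affine superspace $\End_{\bigwedge}(\bfv)$. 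Since $G_{\ev}=\GL(m)\times\GL(n)$ is reductive and all relevant $G$-representations are rational in characteristic zero, complete reducibility furnishes a $G$-equivariant, $\bbz_2$-homogeneous section of the restriction $\bbc[\End_{\bigwedge}(\bfv)]\twoheadrightarrow \bbc[\overline{G.e}]$, which dualizes to the desired morphism $\Phi$ with $\Phi(e)=b$.

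The main obstacle I anticipate is the parity bookkeeping throughout: one must verify that the passage from $\ggg_e$-commutation to genuine $G_e$-invariance via the exponential is faithful at the level of $\bigwedge$-points, that Lemma \ref{lemma3}(2) really automates the extension from $G.e$ to $\overline{G.e}$ in the super sense (including on the odd part of the coordinate superalgebra), and that the splitting provided by reductivity of $G_{\ev}$ can be arranged to respect the $\bbz_2$-grading. Once these super-refinements are in place, the remainder of the argument follows the contour of the classical Kraft--Procesi proof.
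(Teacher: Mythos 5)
Your argument follows the Kraft--Procesi contour that the paper's one-line proof compresses into a single invocation of the stabilizer principle, so the overall approach is essentially the same; what you are doing, quite correctly, is making explicit the two extension steps that the paper's terse proof elides. The stabilizer principle as stated just before the lemma only produces a $\bfG$-equivariant map on the orbit of $e$, not on all of $\End_\bigwedge(\bfV)$, and not a priori an algebraic morphism of the kind required for Lemma \ref{lemma4-(2)}. Your passage to the orbit closure via Lemma \ref{lemma3}(2), which identifies $\bbc[G.e]$ with $\bbc[\overline{G.e}]$, is precisely what that lemma was formulated to deliver.

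The final extension step, however, contains a gap as written. You propose to split $\bbc[\End_\bigwedge(\bfv)]\twoheadrightarrow\bbc[\overline{G.e}]$ $G$-equivariantly by appealing to reductivity of $G_{\ev}=\GL(m)\times\GL(n)$ together with ``complete reducibility,'' but for the supergroup $\GL(m|n)$ the category of rational representations is not semisimple, so reductivity of the underlying even group does not by itself produce $\bfG$-equivariant splittings or exactness of the $\bfG$-invariants functor; a $G_{\ev}$-equivariant section need not be $\bfG$-equivariant. What the argument actually requires is semisimplicity of the specific $\bfG$-module $\bbc[\End_\bigwedge(\bfv)]\otimes\End_\bigwedge(\bfv^{\otimes d})$, which must be established separately --- for instance via the unitarity of the relevant $\gl(m|n)$-modules in the sense of Gould--Zhang, or by computing the invariants directly through the Grassmannian thickening in the style of Lehrer--Zhang. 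The paper's own proof is equally silent on this point, so your write-up is a net gain in transparency; you just need to replace the blanket reductivity appeal with an honest semisimplicity argument for the modules at hand.
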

\begin{proof}  Note that $\bfG$ acts on $\text{End}_{\bigwedge}(\bfV)$ and $\text{End}_{\bigwedge}(\bfV^{\otimes d})$ by conjugation respectively (as a realization given in Lemma \ref{2.8.2}).
 From the assumption $b\in (\text{End}_{\bigwedge}\bfvotd)^{\bfG_e}$, it follows that
 \begin{align}\label{eq: stab rel}
 \text{Stab}_G(e)\subseteq \text{Stab}_G(b).
  \end{align}
  Now we proceed with the arguments in steps.

 (i) Note that the purely-even subgroup of $\bfG=\GL(m|n)$ is isomorphic to $H:=\GL(m)\times \GL(n)$ which is reductive,  identified with  $\shortmid \bfG\shortmid$. Still set $\calh:=\Lie(H)$. Then $\calh=\gl(m)\oplus\gl(n)$. For $e\in \calh$, from (\ref{eq: stab rel}) it follows that   $\text{Stab}_H(e)\subseteq\text{Stab}_H(b)$.
  Consider $M=V^{\otimes d}$, then $\bfvotd=M\otimes_\bbc \bigwedge$. Note that $b\in \text{End}_{\bbc}(M)$, 
  and $\text{End}_{\bbc}(\bfvotd)$ is a base change of $\text{End}_{\bbc}(M)$.  By Lemma \ref{lem: general extension of GP aciton}, there exists an $H$-equivariant morphism $\Phi$ from $\calh$ to $\text{End}_{\bbc}(M)$ such that $\Phi(e)=b$. Since $\text{End}_{\bigwedge}(\bfV)=\text{End}_\bbc(V)\otimes \bigwedge$, then  this $\Phi$ can be regarded, by base change,  as an $H$-equivariant morphism from $\text{End}_{\bigwedge}(\bfV)$ to $\text{End}_{\bigwedge}(\bfV^{\otimes d}) = \text{End}_{\bigwedge} (\bfV)^{\otimes d}$.

(ii) We claim that the above $\Phi$ is a morphism of the super Harish-Chandra pair $(H,\gl(m|n))$. This is to say, $\Phi$ is a homomorphism of  $\gl(m|n)$-modules, compatible with the $H$-action. This claim follows from the facts that both $\text{End}_{\bigwedge}(\bfV)$ and $\text{End}_{\bigwedge}(\bfV^{\otimes d})$  are natural $\gl(m|n)$-modules and $H$-modules, respectively, and that $H$ has conjugation action on $\gl(m|n)$ compatible with those module structures.

 (iii) We claim that the above $\Phi$ can be extended to a $\bfG$-equivariant morphism from $ \text{End}_{\bigwedge}(\bfV)$ to $\text{End}_{\bigwedge} (\bfV)^{\otimes d}$. Note that the distribution algebra $\Dist(\bfG)$ of $\bfG$ is isomorphic to $U(\ggg)$ for $\ggg=\gl(m|n)$ (see \cite[\S5.1]{MS}, or \cite[Chapter I.7]{Jan0}), and  $\Phi$ is already a module homomorphism for the super Harish-Chandra pair $(H,\ggg)$. Hence it is a homomorphism of $(\Dist(\bfG),T)$-modules for the standard maximal torus $T$ of $\GL(m)\times\GL(n)$ (see \cite[\S2]{SW}, or \cite[Chapter II.1.20]{Jan0}). Note that the category of $G$-modules is equivalent to the category of locally-finite $(\Dist(\bfG),T)$-modules (see \cite[Theorem 2.8]{SW}). Hence, $\Phi$ can be extended to a $\bfG$-equivariant morphism.


 Summing up, we accomplish the proof.
 \end{proof}

\begin{remark} We can generally consider an algebraic supergroup $\frakG$ over $\bbc$ and the corresponding super Harish-Chandra pair $(\frakG_\ev,\calg)$ with $\calg=\Lie(\frakG)$. Suppose $M$ and $N$ are modules over $(\frakG_\ev,\calg)$,  then both $M$ and $N$ are $\frakG$-modules (see for example,  \cite[Proposition 8.3.3]{CCF}).
%
We give a general claim of (iii) in the above proof.

\begin{claim}
If $\Phi:M\rightarrow N$ is a morphism of the super Harish-Chandra pair, then $\Phi$ is  a $\frakG$-equivariant morphism.
\end{claim}
%

The arguments for this claim can be done in the same spirit of the arguments as in \cite[Proposition 8.3.3]{CCF} which deals with the equivalence of the action of super Harish-Chandra pairs and of supergroups. Actually, $\Phi$ is already a homomorphism of $\calg$-modules. It suffices to show that $\Phi$ is a homomorphism of comodules over $\bbc[\frakG]$. Making use of some arguments with  Hopf algebra techniques (see for example \cite[Proposition 18(2)]{Ma}), one can carry it out.
\end{remark}

\subsection{} Note that there is an isomorphism
	\begin{align*}
	\pi :((\bfV^*)^{\otimes d}\otimes \bfV^{\otimes d})^{*}\cong
\text{End}_{\bigwedge}(\bfV^{\otimes d})=(\End_{\bigwedge}(\bfV))^{\otimes d}
	\end{align*}
	determined  by the non-degenerate bilinear form defined as follows: for
$\Omega\in (\End_{\bigwedge}\bfV^{\otimes d})$,  $v_1\otimes v_2\otimes\cdots \otimes v_d\in
\bfV^{\otimes d}$ and $f_1\otimes f_2\otimes\cdots\otimes f_d\in (\bfV^*)^{\otimes d}$, we set
	\begin{align*}
	&\langle\Omega,\; f_1\otimes f_2\otimes\cdots\otimes f_d\otimes
v_{1}\otimes\cdots \otimes v_{d}\rangle\\
	=&\langle f_1\otimes f_2\otimes\cdots\otimes f_d, \;
\Omega(v_{1}\otimes\cdots \otimes v_{d})\rangle.
	\end{align*}
Here and further, we use the notation $\langle -,- \rangle$ to stand for the valuation of the specific  pair of linear duals.

 Now we {have the following results, which will be used in the  proof of the forthcoming Lemma \ref{lemma4-(2)}.} 
\begin{lemma}\label{3.3.1}     Set $\bfG=\GL(\bfV)$. The following statements hold.
\begin{itemize}
	\item[(1)] The $\bfG$-invariants of $\bigwedge$-function ring in
$\text{End}_{\bigwedge}(\bfV)^{q}$  are generated by the supertrace functions at
$(X_1,\ldots,X_q)\in \text{End}_{\bigwedge}(\bfV)^q$ (the space of $q$-tuples of elements of $\text{End}_{\bigwedge}(\bfV)$):
	$$\str(X_{i_{1}}X_{i_{2}}\cdots X_{i_{j}}),\;
i_1,i_2,\ldots,i_j\in\{1,2,\ldots,q\}.$$

	\item[(2)] Furthermore, the $\bfG$-invariants in $\text{End}_{\bigwedge}(\bfV)^{\otimes
d}$
are  $\bigwedge$-linear combinations of the following basic invariants:
	$$\theta_\sigma: h_1\otimes h_2\otimes\cdots\otimes h_d\otimes
v_{1}\otimes\cdots \otimes v_{d}\mapsto (-1)^{J (h,v, \sigma)}\prod_{j}\langle
h_{\sigma(j)}, v_{j}\rangle,$$
	where $\sigma\in {\fraksd}$, ${J(h,v, \sigma)}:=\sum_{i=1}^d t_i$ with
$t_i=|v_i|\sum_{j=i+1}^d |h_{\sigma(j)}|$.
	
	\item[(3)] Under the identification between $(\bfV^{*})^{\otimes d}\otimes \bfV^{\otimes d}$ and
$\text{End}_{\bigwedge}(\bfV)^{\otimes d}$, we have
$$\theta_\sigma(X_{1}\otimes\cdots \otimes X_{d}) = \str(X_{i_{1}}\cdots
X_{i_{r}})\str(X_{j_{1}}\cdots X_{j_{s}})\cdots\str(X_{k_{1}}\cdots X_{k_{t}}),$$
where  $X_{1}, X_{2}, \ldots,X_{d}\in \text{End} _{\bigwedge}(\bfV)$, and
$\sigma = (i_{1}i_{2}\cdots i_{r})(j_{1}j_{2}\cdots
j_{s })\cdots(k_{1}k_{2}\cdots k_{t})\in {\fraksd}$ is an expression in product
of cycles not intersecting mutually.	
\end{itemize}
\end{lemma}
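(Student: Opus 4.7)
The plan is to prove the three parts in the order (2) $\Rightarrow$ (3) $\Rightarrow$ (1), because (2) is the structural core (an incarnation of the super First Fundamental Theorem of invariant theory for $\GL(m|n)$), (3) is a sign-tracking computation, and (1) follows from (2)--(3) by a polarization argument.

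First, for part (2), the plan is to invoke the super Schur-Sergeev duality already recorded as Theorem~\ref{ss dual}, together with the base change to $\bigwedge_\bz$. Under the canonical identification $\End_{\bigwedge}(\bfV)^{\otimes d} \cong (\bfV^*)^{\otimes d} \otimes \bfV^{\otimes d}$, the space of $\bfG$-invariants on the right corresponds dually to $\Hom_{\bfG}(\bfV^{\otimes d}, \bfV^{\otimes d})$, which by Schur-Sergeev is precisely the image of $\pi_d(\fraksd)$. Thus a $\bigwedge$-basis of invariants is indexed by $\sigma\in\fraksd$, and I would explicitly write the element of $(\bfV^*)^{\otimes d}\otimes\bfV^{\otimes d}$ corresponding to the permutation operator $\pi_d(\sigma)$. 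Evaluating $\pi_d(\sigma)$ on a pure tensor and pairing with $h_1\otimes\cdots\otimes h_d\in (\bfV^*)^{\otimes d}$ produces the product $\prod_j\langle h_{\sigma(j)}, v_j\rangle$ together with a sign coming from moving $v_i$'s past $h_j$'s in the supersymmetric braiding---this sign is exactly $(-1)^{J(h,v,\sigma)}$ with $J$ as in the statement. The main obstacle here is simply bookkeeping of parities, so I would fix an ordered homogeneous basis, compute the effect of a simple transposition $s_i$ and then extend multiplicatively to general $\sigma$ to confirm the combinatorial formula for $J$.

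For part (3), the plan is to expand $\theta_\sigma$ on a pure tensor $X_1\otimes\cdots\otimes X_d$ using the identification of $X_k\in\End_{\bigwedge}(\bfV)$ with its tensor $\sum_{\alpha,\beta}(X_k)_{\alpha,\beta}\, v_\alpha^*\otimes v_\beta\in\bfV^*\otimes\bfV$, then regroup the sums according to the cycle decomposition $\sigma=(i_1\cdots i_r)(j_1\cdots j_s)\cdots(k_1\cdots k_t)$. Within a single cycle the pairings $\langle h_{\sigma(j)},v_j\rangle$ force a closed chain of matrix entries whose sum is, up to the parity sign carried by $J$, the supertrace of the corresponding product; distinct cycles contribute independent supertraces. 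The only delicate point is verifying that the signs assembled by $J(h,v,\sigma)$ agree precisely with the supertrace signs $(-1)^{|v_{i_1}|+\cdots}$ produced by cycling the basis vectors around each cycle, which I would check cycle-by-cycle, reducing to the single-cycle case by the block-diagonal behaviour of $J$ on disjoint cycles.

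Finally, for part (1), the plan is a standard polarization/restitution argument transferred to the $\bbz_2$-graded setting. An arbitrary $\bfG$-invariant regular $\bigwedge$-function $F$ on $\End_{\bigwedge}(\bfV)^q$ decomposes into its multihomogeneous components of multidegree $(k_1,\ldots,k_q)$; by the polarization map each such component is determined by a multilinear $\bfG$-invariant function on $\End_{\bigwedge}(\bfV)^{\otimes K}$ with $K=k_1+\cdots+k_q$, and can be recovered from it by restitution (setting appropriate arguments equal). A multilinear invariant function on $\End_{\bigwedge}(\bfV)^{\otimes K}$ is an element of the dual of $\End_{\bigwedge}(\bfV)^{\otimes K}$ that is $\bfG$-invariant, and by parts (2)--(3) together with the non-degenerate supertrace pairing on $\End_{\bigwedge}(\bfV)$ these are $\bigwedge$-linear combinations of the supertrace-product maps $\str(X_{i_1}\cdots X_{i_r})\str(X_{j_1}\cdots X_{j_s})\cdots$. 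Restituting back to arguments $X_1,\ldots,X_q$ then yields the claimed generating set. The main obstacle across the whole proof is the sign combinatorics of part (2); once that is clean, (3) is a computation and (1) is a formal consequence.
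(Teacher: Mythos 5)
Your proposal is correct and takes essentially the same route as the paper: part (2) is derived from super Schur--Sergeev duality exactly as the paper does, and the polarization/restitution argument for (1) and the cycle-by-cycle computation for (3) are precisely what the paper delegates to its citation of Procesi's proof of the classical first fundamental theorem. The one small imprecision is calling $\{\theta_\sigma\}_{\sigma\in\fraksd}$ a $\bigwedge$-basis of invariants; Schur--Sergeev gives only that these span (they can be linearly dependent when $d$ is large relative to $m+n$), but since the statement only asserts that invariants are $\bigwedge$-linear combinations of the $\theta_\sigma$, this does not affect the argument.
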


\begin{proof} For Parts (1) and (3), the proof is the same as in \cite[Theorem 1.3]{Pr}, and we omit it here.  In the following, we only deal with the second part.

	(2) By \cite[Theorem 3.6]{LZ} we know that  $\theta_\sigma$  is
$\bfG$-invariant, which can be achieved by the same arguments as in the proof of \cite[Theorem 1.1]{Pr}.
According to the Schur-Sergeev duality (see \cite{Ser}), $\text{End}_{\bfG}{(\bfV^{\otimes
d})}$  is generated by the twisted action of the  generators $(i\; i+1)\in
{\fraksd}$ as below
	\begin{align*}
	&(i\; i+1)\cdot v_{1}\otimes \cdots \otimes v_{i}\otimes v_{i+1}\otimes \cdots
\otimes v_{d}\\
	&=(-1)^{|v_{i}|\cdot |v_{i+1}|}v_{1}\otimes \cdots \otimes v_{i+1}\otimes
v_{i}\otimes \cdots \otimes v_{d},\quad 1\leq i\leq d-1.
	\end{align*}
	Recall  that there is an isomorphism as presented previously
	\begin{align*}
	\pi :((\bfV^*)^{\otimes d}\otimes \bfV^{\otimes d})^{*}\cong \text{End}_\bbc(\bfV^{\otimes
d})=(\End_\bbc(\bfV))^{\otimes d}.
	\end{align*}
	Hence, the generators of invariants in the Schur-Sergeev duality become
$\theta_\sigma$ for $\sigma\in {\fraksd}$ which are defined via
	\begin{align*}
	\langle \theta_\sigma, \;h_1\otimes h_2\otimes\cdots\otimes h_d\otimes
v_{1}\otimes\cdots \otimes v_{d}\rangle
	=(-1)^{J(h,v, \sigma)}\prod_{j}\langle h_{\sigma(j)}, v_{j}\rangle. 	
\end{align*}
\end{proof}

\begin{lemma}\label{lemma4-(2)}
Keep the notations and assumptions as in Lemma \ref{lemma4-(1)}.
 Let $\mathscr{V}
\triangleq\{\psi:\text{End}_{\bigwedge}(\bfV)\rightarrow\text{End}_{\bigwedge}(\bfV)^{\otimes
d }\mid\psi\circ g =g\circ\psi,\;\; \forall g\in \bfG \}$.
Then  $\mathscr{V}$  is a $\bigwedge$-module generated by the elements as follows:
	$$X\longmapsto \sigma\cdot(X^{h_{1}}\otimes X^{h_{2}}\otimes\cdots\otimes
X^{h_{d}}),$$
	where $h_{i}\in\bbz_+$ for $1\leq i\leq d$, and $\sigma\in {\fraksd}$ gives rise to a
permutation on the positions of the homogeneous tensor parts.
\end{lemma}

\begin{proof} Recall that
$\mathscr{V}=\{\psi:\text{End}_{\bigwedge}(\bfV)\rightarrow\text{End}_{\bigwedge}(\bfV)^{\otimes
d}\mid\psi\circ g =g\circ\psi,  \forall g\in \bfG  \}$, where
$g\cdot(X_{1}\otimes\cdots\otimes X_{d}) = {\Ad}g(X_{1})\otimes\cdots\otimes
{\Ad}g(X_{d})$.
For a given   $\varPhi \in \mathscr{V}$, we define a $\bigwedge$-function
$\bar{\varPhi}$ on $(\End_{\bigwedge}(\bfV))^{\otimes (d+1)}$, via
$$\bar{\varPhi}: X\otimes Y_{1}\otimes\cdots \otimes Y_{d}\longmapsto
\str(\varPhi(X)\circ(Y_{1}\otimes\cdots \otimes Y_{d})),$$
which is $\bfG$-invariant, and $\bigwedge$-linear in $Y_1, Y_2,\ldots,Y_d$. Denote by
$\mathcal{R}$ the $\bigwedge$-ring of all $\bfG$-invariant functions on
$(\End_{\bigwedge}(\bfV))^{\otimes(d+1)}$.
By the non-degenerate property of the supertrace we see that the mapping
$\varPhi \rightarrow \bar{\varPhi}$  is injective from  $\mathscr{V}$  to
$\mathcal{R}$.

On the other hand, Lemma \ref{3.3.1}(1) entails that all elements in  $\mathcal{R}$  are of the form
$$\sum a_X\str(X^{q_{1}}Y_{i_{1}}X^{q_{2}}Y_{i_{2}}\cdots)
\str(X^{p_{1}}Y_{j_{1}}X^{p_{2}}Y_{j_{2}}\cdots )\cdots
\str(X^{s_{1}}Y_{t_{1}}X^{s_{2}}Y_{t_{2}}\cdots ),$$
where $a_X\in \bigwedge$, $q_i,p_j,s_k\in \bbz_+$ and
$\sigma=(i_1i_2\cdots )(j_1j_2\cdots)\cdots(t_1t_2\cdots)\in \fraksd$ in the
expression is a product of some cycles not intersecting mutually. 
In addition, from Lemma  \ref{3.3.1}(2)-(3) it follows that  these elements are
of the form
$$\str(\Phi(X)\circ(Y_{1}\otimes Y_{2}\otimes \cdots \otimes Y_{d})),$$
where $\Phi(X)$  is  $\mathcal{R}$-spanned by the mapping
$X\longmapsto \sigma \cdot (X^{h_{1}}\otimes X^{h_{2}}\otimes \cdots \otimes
X^{h_{d}})$.

 The injective property of the mapping sending $\varPhi$  to  $\bar{\varPhi}$  entails
 that the second statement of the lemma.  The proof is completed.
\end{proof}

\subsection{Summation for the proof of Theorem \ref{thm Vust0}}\label{end of
pf th 4.1}  Let us turn back to 
the proof of Theorem \ref{thm
Vust0}.
       Let $\mathscr{B}_\bbc = \text{End}_{\rho({\ggg_{e}})}(V^{\otimes
d})$,  a subalgebra of $\text{End}_{\bbc}(V^{\otimes
d})$. By our analysis at the beginning of \S\ref{sec Vustproof}, we need to
decide  the generators of $\mathscr{B}_\bbc$.
One can easily reformulate the $\bbc$-versions of Lemmas \ref{lemma4-(1)}, \ref{3.3.1} and \ref{lemma4-(2)}, just replacing $\bigwedge$ with $\bbc$ in the original statements.

By Lemma \ref{lemma4-(1)}, for any  $b\in
\mathscr{B}_\bbc$ there is a
$G$-equivariant morphism $\Phi: \text{End}_{\bbc}(V)\rightarrow
\text{End}_{\bbc}(V^{\otimes d}) = \text{End}_{\bbc} (V)^{\otimes d}$ such that  $\Phi(e) = b$. By Lemma \ref{lemma4-(2)}, such an $\Phi$ is
generated by some elements of the form:	$X\mapsto \sigma\cdot(X^{h_{1}}\otimes
X^{h_{2}}\otimes\cdots\otimes X^{h_{d}})$ for $\sigma\in \fraksd$. Hence the theorem follows.

\section{Distribution superalgebras and double centralizers}\label{sec 3}

As a consequence of Theorem \ref{thm Vust0}, The second equality of Theorem \ref{doublecentralizerin} has been already proved. In this section, we will accomplish the remaining proof of Theorem \ref{doublecentralizerin}
by the same strategy as in \cite{BKl}.

Maintain the notations as in \S\ref{sec 1}. In particular, from now on we recover the notation $V=\bbc^{m|n}$. Still set $\ggg=\gl(V)$ and $e\in\ggg_\bz$ is a principal nilpotent element.
Recall that as in the introduction, we have the action $\bar\psi_d$ of  $\mathbb
C_n[x_1,\dots,x_d]\rrtimes \mathbb C {\fraksd}$ on $V^{\otimes d}$, and the action
$\phi_d$ of $U(\ggg_e)$ on $V^{\otimes d}$.
In order to prove that the centralizer of  $\bar\psi_d(\mathbb
C_n[x_1,\dots,x_d]\rrtimes \mathbb C {\fraksd})$ is exactly $\phi_d(U(\ggg_e))$,
we will realize the latter by the action of  $\bbc[M_e]^*_d$ on $V^{\otimes d}$,
where $\bbc[M_e]^*_d$ is  the homogeneous dual space of polynomial degree $d$ of
$\bbc[M_e]$ (see \S\ref{sec distri}). To carry this out, we will make use
of the description of $\ggg_e$ via ``pyramid" expression technique, along with
the
description of $U(\ggg_e)$ by the distribution algebra of $M_e$.

\subsection{Pyramids and a description of $\ggg_e$ with pyramids}\label{pyramid}
We take a standard principal nilpotent element $e:=\sum_{i=1}^{m-1}e_{\bar i,
\overline{i+1}}+\sum_{j=1}^{n-1}e_{j,j+1}\in \ggg_\bz$.
Accordingly, we have a pyramid associated with $(m|n)$. Roughly speaking, a
pyramid is a graph consisting of
connected horizontal lines of boxes. Corresponding to $(m|n)$, we have the
following pyramid
\begin{equation}\label{pyramid}
\Xi=
{\begin{picture}(90, 35)
	\put(15,-10){\line(1,0){105}}
	\put(15,5){\line(1,0){105}}
	\put(15,20){\line(1,0){75}}
	\put(15,-10){\line(0,1){30}}
	\put(30,-10){\line(0,1){30}}
	\put(45,-10){\line(0,1){30}}
	\put(60,-10){\line(0,1){30}}
	\put(75,-10){\line(0,1){30}}
	\put(90,-10){\line(0,1){30}}
	\put(105,-10){\line(0,1){15}}
	\put(120,-10){\line(0,1){15}}
	
\put(18,-5){$1$}\put(33,-5){$2$}\put(48,-5){$3$}\put(63,-5){$\cdot$}\put(78,-5){$\cdot$}\put(93,-5){$\cdot$}\put(108,-5){$n$}
	
\put(18,10){$\bo$}\put(33,10){$\cdot$}\put(48,10){$\cdot$}\put(63,10){$\cdot$}\put(78,10){$\overline{m}$}
	\end{picture}}\\[3mm]
\end{equation}

Let us define the coordinates for all $\bar i$ and $j$  appearing in the pyramid
$\Xi$.
Define the coordinate of $\bar i$ to be $(\bar 1, \bar i)$ for $i=1,\ldots,m$, and the
coordinate of $j$ to be $(1, j)$ for $j=1,\ldots,n$. Then we have the
following sets:
\begin{align*}
I:=&I(m|n) = \{\bar1,\ldots,\overline{m};1,\ldots,n\},\cr
J:= &\{(i,j)\in I\times I\mid\text{col}(i)\leq\text{col}(j), j=\overline m~~~\text{or}~~~n\}, \cr
K:= &\{(\bar1,\bar1,0),\ldots,(\bar1,\bar1,m-1);(\bar1,1,n-m),\ldots,
(\bar1,1,n-1);\cr
&(1,\bar1,0),\ldots,(1,\bar1,m-1); (1,1,0),\ldots,(1,1,n-1)\}.
\end{align*}
We define a map from  $J$ to $K$ by
\begin{align}\label{2.0000}
\upsilon: (i,j)\longmapsto (\text{row}(i),\text{row}(j),\text{col}(j)-\text{col}(i)),
\end{align}
which is  bijective.
 There is  a well-known  explicit description of the centralizer of any nilpotent
 element in a general linear Lie algebra associated with a pyramid (see \cite[Lemma
 7.3]{BK0} and \cite[IV.1.6]{SS}). By the same discussion as in \cite[Lemma 4.2]{BBG}, the same  description is still true for
 our principal nilpotent $e\in \gl(m|n)$ associated with $\Xi$.

\begin{lemma}\label{lem: basis}
 For $(i,j,r)\in K$, set  
\begin{equation}\label{hunny}
	e_{i,j;r} = \sum_{\substack{h,k \in I \\ \text{row}(h) = i, \text{row}(k) = j
\\
			\text{col}(k) - \text{col}(h) = r}} e_{h,k}.
	\end{equation}
Then $\{e_{i,j,r}\mid (i,j,r) \in K\}$ constitute a basis of $\ggg_e$, and all of
them are $\bbz_2$-homogeneous.

Alternatively, for $(i,j)\in J$ we set
\begin{align}\label{2.5555}
\xi_{i,j}: =e_{i,j;{\text{col}(j)-\text{col}(i)}}.
\end{align}
Then  $\{\xi_{i,j}\mid (i,j) \in J\}$ constitutes a basis of $\ggg_e$.

\end{lemma}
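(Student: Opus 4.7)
The proof is a direct verification following the strategy of \cite[Lemma 7.3]{BK0} and its super analogue \cite[Lemma 4.2]{BBG}. The plan is to establish four claims: (i) each $e_{i,j;r}$ lies in $\ggg_e$; (ii) each $e_{i,j;r}$ is $\bbz_2$-homogeneous; (iii) the family $\{e_{i,j;r}\mid (i,j,r)\in K\}$ is linearly independent; (iv) $|K| = \dim \ggg_e$.

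For (i), I would compute $[e, e_{i,j;r}]$ using the super-bracket
\[
[e_{a,b}, e_{c,d}] = \delta_{b,c}\, e_{a,d} - (-1)^{(|a|+|b|)(|c|+|d|)}\delta_{a,d}\, e_{c,b},
\]
which reduces here to an ordinary commutator since $|e|=\bz$. Each matrix unit $e_{h,k}$ appearing in (\ref{hunny}) contributes two boundary terms coming from the summands of $e$ adjacent to $h$ or $k$ in the pyramid $\Xi$. Because (\ref{hunny}) sums over \emph{all} pairs with prescribed rows $(i,j)$ and prescribed column offset $r$, these boundary contributions telescope along each row of the pyramid and cancel pairwise. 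Claims (ii) and (iii) are then immediate: every $e_{h,k}$ in (\ref{hunny}) has $\row(h)=i$ and $\row(k)=j$, so the parity $|h|+|k|=|i|+|j|$ is constant across the sum, and for distinct $(i,j,r)\in K$ the matrix units entering $e_{i,j;r}$ are disjoint subsets of the standard matrix-unit basis of $\ggg$.

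For (iv), I would compute $\dim\ggg_e$ block by block. The even centralizer in $\gl(m)\oplus\gl(n)$ has dimension $m+n$, spanned by the powers of the two Jordan blocks; in the odd part, the $\bbc[e]$-linear maps $V_\bz\to V_\bo$ and $V_\bo\to V_\bz$ each have dimension $\min(m,n)=m$, yielding an odd contribution of $2m$. A direct count of the four families of triples defining $K$ returns the same total $3m+n$. The alternative basis $\{\xi_{i,j}\mid (i,j)\in J\}$ is then automatic from the bijectivity of $\upsilon: J\to K$ recorded in (\ref{2.0000}).

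The main point requiring care, and the principal obstacle I expect, is the bookkeeping in step (iv): one must verify that the precise range of $r$ in each of the four families of $K$ is exactly the range for which (\ref{hunny}) produces a nonzero centralizer element, which is governed by the left-justified alignment of the two rows of the pyramid $\Xi$ and forces the shifted starting points in the cross-blocks $(\bar 1,1,r)$ and $(1,\bar 1,r)$. Once these ranges are pinned down the counts agree on the nose, the telescoping in (i) terminates cleanly at both ends of each row, and everything else is a routine adaptation of the non-super argument in \cite{BK0}.
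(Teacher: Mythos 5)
Your proposal is correct but takes a genuinely different route from the paper, which disposes of this lemma in two sentences: since $e$ is an even element, the super-commutator $[e,X]=eX-(-1)^{|e||X|}Xe$ collapses to the ordinary matrix commutator $eX-Xe$, so $\ggg_e$ coincides, as a subspace of $\Mat(m|n;\bbc)$, with the centralizer of $e$ regarded as a nilpotent in $\gl(m+n)$; the lemma is then exactly the non-super statement of Springer--Steinberg [SS, IV.1.6] or Brundan--Kleshchev [BK0, Lemma 7.3], and the paper simply cites it. You implicitly make the same observation (you note the super-bracket reduces to an ordinary commutator because $|e|=\bz$), but instead of reducing to the cited result you reprove it: the telescoping calculation of $[e,e_{i,j;r}]$, the remark that the matrix-unit supports of distinct $e_{i,j;r}$ are disjoint (which yields $\bbz_2$-homogeneity and linear independence simultaneously), and the block-by-block dimension count $\dim\ggg_e=(m+n)+2m=3m+n$. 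This amounts to transporting the proof of [BK0, Lemma 7.3] into the super setting rather than invoking it. What your version buys is self-containment, and it makes explicit where the pyramid geometry enters: the range of $r$ in each family of $K$ is precisely the range for which the telescoping closes cleanly at both ends of the row, and that is the one bookkeeping step that genuinely requires care. The cost is length, since the paper's argument is a pure reduction and works in one line. Both routes are sound, and your count $3m+n$ and the caveat you flag about pinning down the $r$-ranges from the left-justified alignment are exactly right.
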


\begin{proof}  Note that $e\in \ggg_\bz$. The centralizer of $e$ in
$\ggg=\gl(m|n)$ is the same as a vector space as the centralizer of $e$ viewed as
an element of $\gl(m+n)$.  The lemma follows from \cite[IV.1.6]{SS} or
\cite[Lemma 7.3]{BK0}.
\end{proof}

\subsection{Some special notations for tensor products}\label{sec morenota}
We need some other notations.
\subsubsection{}\label{3.2.1}
Let $I^{d}$ and $J^{d}$ denote respectively, a
collection of
all $d$-tuples $\mathbf{i}=(i_{1},\ldots,i_{d})$ and  a collection of all pairs
$(\mathbf{i},\mathbf{j})$ with $\mathbf{i} = (i_{1},\ldots,i_{d})$, $\mathbf{j} =
(j_{1},\ldots,j_{d})$ satisfying  $(i_{k},j_{k})\in J$ for $1\leq k\leq d$.
Denote by $K^{d}$  a collection of all triples
$(\mathbf{i},\mathbf{j},\mathbf{r})$ with
$\mathbf{i} = (i_{1},\ldots ,i_{d})$, $\mathbf{j} = (j_{1},\ldots,j_{d})$,
$\mathbf{r} = (r_{1},\ldots,r_{d})$ satisfying each $(i_{k},j_{k},r_{k})\in K$ for $1\leq k\leq d$.

As a generalization of (\ref{2.0000}), there is a bijective map
$$\Upsilon: J^{d}\longrightarrow K^{d}$$
defined as
$$(\mathbf i,\mathbf j)\longmapsto (\row(\mathbf i),\row(\mathbf j),\col(\mathbf
j)-\col(\mathbf i)),$$
where $\text{row}(\mathbf i) = (\text{row}(i_{1}), \ldots, \text{row}(i_{d}))$,
and $\text{col} (\mathbf i)= (\text{col}(i_{1}), \ldots, \text{col}(i_{d}))$.


\subsubsection{} For $\bfi=(i_1,\ldots,i_d)\in I^d$, we set $\epsilon_\bfi:=(|i_1|,\ldots,|i_d|)\in \bbz_2^d$, and write $|\epsilon_\bfi|:= |i_1|+\cdots+|i_d|$. Furthermore, following \cite{BKu} we adopt some more notations as below. For $\epsilon=(\epsilon_1,\epsilon_2,\ldots,\epsilon_d)$ and $\delta=(\delta_1,\ldots,\delta_d)$ in $\bbz_2^d$, set
$$\alpha(\epsilon,\delta):=\prod_{1\leq s<t\leq d}(-1)^{\delta_s\epsilon_t}.  $$
Then we have $(X_1\otimes X_2\otimes \cdots\otimes X_d)(w_{1}\otimes\cdots\otimes w_{d})=\alpha((|X_1|,\ldots,|X_d|), (|w_1|,\ldots,|w_d|)) X_1(w_1)\otimes\cdots \otimes X_d(w_d)$ for $\bbz_2$-homogeneous elements $X_i\in \End_\bbc(V)_{|X_i|}$ and $w_j\in V_{|w_j|}$ for $i,j\in I$, which enables us to identify $\End_\bbc(V)^{\otimes d}$ with $\End_\bbc(V^{\otimes d})$.

\subsubsection{} Note that any $\sigma\in {\fraksd}$ acts on the right on $I^d$ by permutation; this is to say,
$\bfi.\sigma=(i_{\sigma(1)},\ldots, i_{\sigma(d)})$ for
$\bfi=(i_1,\ldots,i_d)\in I^d$. For $\epsilon\in \bbz_2^d$ and $\sigma\in \fraksd$, following \cite{BKu} again we set
$$ \nu(\epsilon,\sigma):=\prod_{\overset{1\leq s<t\leq d}{\sigma^{-1}(s)>\sigma^{-1}(t)}}(-1)^{\epsilon_s\epsilon_t}.$$

Arising from the map $\psi_d$ in (\ref{super symaction}), the action of $\fraksd$ on the right on $V^{\otimes d}$ becomes
$$(w_1\otimes\cdots\otimes w_d).\sigma=\nu((|w_1|,\ldots,|w_d|), \sigma) w_{\sigma(1)}\otimes \cdots\otimes w_{\sigma(d)}.$$

We can further define the action of $\fraksd$ on the right on $J^d$
diagonally, which means  $(\bfi,\bfj).\sigma :=(\bfi. \sigma,\bfj.\sigma)$ for any
$\sigma\in \fraksd$. This conjugation is denoted by $(\bfi,\bfj){\overset
\sigma\curvearrowright} (\bf{h},\bf{k})$ for $(\bf{h},\bf{k})= (\bfi,\bfj).\sigma$. Similarly,
$\fraksd$ acts diagonally on $K^d$. Choose the sets of orbit representatives $J^d\slash
{\fraksd}$ and $K^d\slash {\fraksd}$. Recall that for $\bfi\in I^d$, we write
$\row(\bfi)=(\row(i_1),\ldots,\row(i_d))$ and $\col(\bfi)= (\text{col}(i_{1}), \ldots, \text{col}(i_{d}))$.
In the same sense, set $\col(\bfj)-\col(\bfi)=(\col(j_1)-\col(i_1),\ldots,\col(j_d)-\col(j_d))$.
Then the
map $\Upsilon:J^d\rightarrow K^d$, $(\bfi,\bfj)\mapsto
(\row(\bfi),\row(\bfj),\col(\bfj)-\col(\bfi))$ is  ${\fraksd}$-equivariant.

We can write
\begin{align}\label{Upsilon}
\Upsilon(\bfi,\bfj){\overset
\sigma\curvearrowright} \Upsilon(\bfs,\bft)
 \end{align}
 for  $(\row(\bfi),\row(\bfj), \col(\bfj)-\col(\bfi)).\sigma= (\row(\bfs),\row(\bft), \col(\bft)-\col(\bfs))$.

\subsection{The distribution superalgebra associated with $e$} \label{sec distri}
Let $M_{e}$ be the
superalgebra consisting of all matrices in $\Mat(m|n;\bbc)$ that commute
with $e$. It is obvious that  $M_{e}$ is equal to  $ \ggg_{e}$ as vector spaces.
There is a $\mathbb{Z}_{2}$-graded structure on the coordinate  ring
$\mathbb{C}[M_{e}]$.  
Hence
\begin{align}\label{tonggou}
\mathbb{C}[M_{e}]\cong S((\ggg_e)_\bz^*)\otimes \bigwedge((\ggg_e)^*_{\bo}),
\end{align}
    which is a commutative superalgebra,  and  linearly spanned by canonical monomial elements with description as below (modulo the order)
\begin{align}\label{base}\prod_{(i,j)\in J}
x_{\row(i),\row(j);\col(j)-\col(i)}^{s_{i, j}}
\end{align}
where $s_{i,j}\in
\bbz_+$ if $|\row(i)|+|\row(j)|=\bar{\bar 0}$, and
$s_{i, j}\in\bbz_2$ if $|\row(i)|+|\row(j)|=\bar{\bar 1}$, and the factors $\{x_{h,k;t}\in M_{e}^*\mid (h,k,t)\in K\}$ are
the linear duals of  $\{e_{i,j,r}\in M_e\mid (i,j,r)\in K\}$ as
in Lemma \ref{lem: basis}, such that $\langle e_{i,j;r},x_{h,k;t}\rangle=\delta_{(i,j;r),(h,k;t)}(-1)^{|i|+|j|}$ with $\delta_{(i,j;r),(h,k;t)}:=\delta_{i,h}\delta_{j,k}\delta_{r,t}$.

The commutative superalgebra $\bbc[M_e]$ can be endowed with a Hopf superalgebra structure via
$$\Delta(x_{i,j,r})=\sum_{r_1+r_2=r;~(i,h,r_1), (h,j,r_2)\in K}(-1)^{(|i|+|h|)(|h|+|j|)}x_{i,h,r_1}\otimes x_{h,j,r_2},$$
and $\varepsilon(x_{i,j,r})=\delta_{i,j}\delta_{r,0}$.

Define the distribution space $\text{Dist}(M_{e}) = \{f\in
\mathbb{C}[M_{e}]^{*}\mid f((\ker\varepsilon)^{m}) = 0  \text{ for } m\gg0\}$, 
where $\varepsilon$ is the unit map of $\bbc[M_e]$.
There is a multiplication on $\bbc[M_e]^*$ dual to the comultiplication on $\bbc[M_e]$, defined via $(xy)f=(x\bar\otimes y)\Delta(f)$ for $x,y\in \bbc[M_e]^*$ and $f\in \bbc[M_e]$.  Here and further the notation $\bar\otimes$ means the operation     obeying the rule of signs as below:
$$(x\bar\otimes y) (f\otimes g)=(-1)^{|y||f|}x(f)y(g).$$
Furthermore, $\text{Dist}(M_e)$  can be endowed with a Hopf
algebra structure, which  is isomorphic to $U(\ggg_e)$ as Hopf superalgebras (see \cite[\S2.2]{BKl} or \cite[\S2.2]{SW}).

\subsubsection{} Set the degree of each polynomial generator to be  $1$.
 Then $\mathbb{C}[M_{e}]$ is endowed with a graded algebra structure:
$$\mathbb{C}[M_{e}] = \bigoplus_{i\geq 0} \mathbb{C}[M_{e}]_{i}.$$
Moreover, each graded dual subspace $\mathbb{C}[M_{e}]_{i}^{*}$ of the dual
algebra $\mathbb{C}[M_{e}]^*$ is finite-dimensional.

Recall that (\ref{base}) provides a  basis of the coordinate superalgebra
$\bbc[M_e]$. Then $\mathbb{C}[M_{e}]_{d}^{*}$ has the corresponding dual basis.
Now we write a set of basis elements for $\mathbb{C}[M_{e}]_{d}^{*}$
specifically. It follows from (\ref{tonggou}) that the base of
$\mathbb{C}[M_{e}]_{d}$  is of the form $x_{\mathbf{i},\mathbf{j},\mathbf{k}} =
x_{i_{1}, j_{1}, k_{1}} \cdots x_{i_{d}, j_{d}, k_{d}}$ for $(\bfi,\bfj,\bfk)\in
K^d\slash {\fraksd}$.
We introduce a set of  basis elements
$\{\xi_{\bfi,\bfj,\bfk}\mid (\bfi,\bfj,\bfk)\in K^d\slash
 {\fraksd}\}$ of $\mathbb{C}[M_{e}]^{*}_{d}$ dual to
 $\{x_{\bfi,\bfj,\bfk}\mid (\bfi,\bfj,\bfk)\in K^d\slash  {\fraksd}\}$. Note that the bijective map  $\Upsilon: J^d \rightarrow K^d$ satisfies the $\fraksd$-equivariance and  the property   $\nu(\epsilon_\bfi+\epsilon_\bfj,\sigma)
 =\nu(\epsilon_{\row(\bfi)}+\epsilon_{\row(\bfj)},\sigma)$ because $\epsilon_\bfi=\epsilon_{\row(\bfi)}$ for any $\bfi\in I^d$.
  Then for any $(\bfi,\bfj)\in J^d$ we have
\begin{align*}
\langle\xi_{\Upsilon(\bfi,\bfj)}, x_{\Upsilon(\bfi,\bfj)}\rangle
=\alpha(\epsilon_\bfi+\epsilon_\bfj, \epsilon_\bfi+\epsilon_\bfj)
\end{align*}
and
\begin{align} \label{angle value}
&\langle \xi_{\text{row}(\mathbf{i}),\text{row}(\mathbf{j});
\text{col}(\mathbf{j})-\text{col}(\mathbf{i})}, x_{\bfs,\bft,\text{col}(\mathbf{t})-\text{col}(\mathbf{s})}\rangle\cr
=&\begin{cases} \alpha(\epsilon_\bfi+\epsilon_\bfj, \epsilon_\bfi+\epsilon_\bfj)\nu(\bfi+\bfj,\sigma)  &\mbox{if there exists }\sigma\in \fraksd \mbox{ such that } \Upsilon(\bfi,\bfj){\overset
\sigma\curvearrowright} \Upsilon(\bfs,\bft),\cr
0  &\mbox{otherwise.}
\end{cases}
\end{align}


\subsection{}
 By the above arguments, we can identify  $U(\ggg_{e})$  with
$\text{Dist}(M_{e})$.
Let us define an algebra homomorphism:
$$\pi_{d}: U(\ggg_{e})\rightarrow \mathbb{C}[M_{e}]_{d}^{*}$$
such that for all  $u\in U(\ggg_{e}), x\in \mathbb{C}[M_{e}]_{d}$, we have
$\pi_{d}( u)(x) =\langle u, x\rangle$.
\begin{lemma}\label{flu1}
	$\pi_{d}$  is surjective.
\end{lemma}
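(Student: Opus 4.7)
The strategy I would take is to exploit the identification $U(\ggg_e)\cong \text{Dist}(M_e)$ recalled just above the lemma, under which $\pi_d$ is nothing more than the restriction of a distribution to the degree-$d$ graded piece $\bbc[M_e]_d$. Surjectivity will then follow from a short extension argument that uses only the graded structure of $\bbc[M_e]$.

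Concretely, because $\bbc[M_e]$ is generated by the degree-one elements $x_{h,k;t}$ and satisfies $\bbc[M_e]_0=\bbc$, the augmentation ideal equals $\ker\varepsilon=\bigoplus_{i\geq 1}\bbc[M_e]_i$, so $(\ker\varepsilon)^m=\bigoplus_{i\geq m}\bbc[M_e]_i$. Given an arbitrary $\eta\in \bbc[M_e]_d^*$, I extend it to $\tilde\eta\in\bbc[M_e]^*$ by declaring $\tilde\eta=\eta$ on $\bbc[M_e]_d$ and $\tilde\eta=0$ on every other graded component. Then $\tilde\eta$ annihilates $(\ker\varepsilon)^{d+1}$, hence $\tilde\eta\in\text{Dist}(M_e)\cong U(\ggg_e)$, and by construction $\pi_d(\tilde\eta)=\eta$. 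This would already prove the lemma.

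If a more explicit preimage is desired (which may be useful for the computations to follow), I would instead produce, for each orbit representative $(\bfi,\bfj,\bfk)\in K^d/\fraksd$, the $d$-fold product
\[
u_{\bfi,\bfj,\bfk}:=e_{i_1,j_1;k_1}\cdots e_{i_d,j_d;k_d}\in U(\ggg_e)
\]
and expand $\langle u_{\bfi,\bfj,\bfk},x_{\bfs,\bft,\bfu}\rangle$ by iterating the multiplication rule $(xy)(f)=(x\bar\otimes y)(\Delta f)$. Since each $e_{i,j;r}$ pairs nontrivially only with the degree-one piece $\bbc[M_e]_1$, only the fully primitive part of $\Delta^{d-1}(x_{\bfs,\bft,\bfu})$ — the signed sum over $\sigma\in\fraksd$ of the tensor permutations of the one-letter factors $x_{s_l,t_l,u_l}$ — survives. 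One then checks that this pairing vanishes unless $(\bfs,\bft,\bfu)$ lies in the $\fraksd$-orbit of $(\bfi,\bfj,\bfk)$ and, on the orbit, reproduces up to a nonzero scalar the formula \eqref{angle value} defining $\xi_{\bfi,\bfj,\bfk}$; hence $\pi_d(u_{\bfi,\bfj,\bfk})$ is a nonzero scalar multiple of $\xi_{\bfi,\bfj,\bfk}$, giving surjectivity. The main technical obstacle along this route is the sign bookkeeping with the factors $\alpha$ and $\nu$: one must verify that the super signs coming from the iterated $\bar\otimes$ and from the $\fraksd$-action conspire so that the stabilizer sum does not accidentally collapse to zero.
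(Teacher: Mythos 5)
Your extension-by-zero argument is correct, and it is a direct version of the paper's proof, which runs the same idea in the contrapositive. Both rest on exactly the same two facts: that $\ker\varepsilon=\bigoplus_{i\geq 1}\bbc[M_e]_i$ because $\bbc[M_e]$ is generated in degree one, so $(\ker\varepsilon)^{m}=\bigoplus_{i\geq m}\bbc[M_e]_i$; and that under $U(\ggg_e)\cong\text{Dist}(M_e)\subset\bbc[M_e]^*$ the map $\pi_d$ is simply restriction of a distribution to the finite-dimensional piece $\bbc[M_e]_d$. The paper supposes $\pi_d$ misses something, picks a nonzero $x\in\bbc[M_e]_d$ annihilated by the whole image (here one needs finite-dimensionality of $\bbc[M_e]_d$ to pass from a proper subspace to a nonzero annihilator), and then derives $x\in\bigcap_{m\geq 1}(\ker\varepsilon)^m=0$, a contradiction. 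Your construction of the explicit preimage $\tilde\eta$ replaces that duality-and-separation argument by a one-line verification that $\tilde\eta$ kills $(\ker\varepsilon)^{d+1}$; it is a little more economical since it uses neither the finite-dimensionality of $\bbc[M_e]_d$ nor $\bigcap_m(\ker\varepsilon)^m=0$. The second, explicit route you sketch (producing $u_{\bfi,\bfj,\bfk}=e_{i_1,j_1;k_1}\cdots e_{i_d,j_d;k_d}$ and iterating the comultiplication) is unnecessary for this lemma; the paper defers exactly that sign computation, with the factors $\alpha$ and $\nu$, to the later discussion around Lemma \ref{Theta precise}, where it is needed for the double-centralizer argument rather than for surjectivity.
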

\begin{proof} The arguments are the same as in the Lie algebra case (see
\cite[Lemma 2.1]{BKl}).
	Suppose the contrary. Then we can find  $0\neq x\in\mathbb{C}[M_{e}]_{d}$
such that  $\pi_{d}(u)(x)=0$ for all $u\in U(\ggg_{e})$.
By identifying  $U(\ggg_e)$ with $\Dist(M_e)$, for each
$m\geq 1$ we have  $u(x)=0$ all $u\in \mathbb{C}[M_{ e}]^{*}$ with $u((\text{ker
}\varepsilon)^{m}) =0$.
    So  $x\in (\text{ker }\varepsilon)^{m}$ by definition.
However,  we already have $\bigcap _{m\geq 1}(\text{ker }\varepsilon)^{m} = 0$,  which is a
contradiction.
\end{proof}

\subsection{The comodule structure on $V^{\otimes d}$}

   Let us first observe that a $\ggg_{e}$-module $V$  can be endowed with a natural
   right $\mathbb{C}[M_{e}]$-comodule structure as follows.

Denote by $\rho$ the action of $\ggg$ on the natural
module $V$.
Fix basis elements $v_{\bar1}, \ldots, v_{\overline{m}}, v_{1}, \ldots, v_{n}$
of $V$ such that $v_{\bar1},\ldots, v_{\overline m}\in V_\bz$ and $v_1,\ldots,
v_{n}\in V_\bo$.  We can write
$$\rho(x)v_{i}=\sum_{j=\bar1}^{\overline{m}}\rho_{ji}(x)v_{j}+\sum_{j=
1}^{n}\rho_{ji}(x)v_{j}, \quad \rho_{ji}\in\ggg^*.$$
Note that by definition $\rho$ is  even. So $\rho_{ji}\in \ggg^*_{|j|+|i|}$ where
$|i|=\bz$ if $i\in\{\bar 1,\ldots,\overline m\}$, and $|i|=\bo$ if
$i\in\{1,\ldots,n\}$.
Consequently, the structure of comodule on $V^{\otimes d}$ can be described via
\begin{align*}
\Delta^{(1)}(v_{i}) = \sum_{j=\bar1}^{\overline{m}}
v_{j}\otimes \rho_{ji}+\sum_{j=1}^{n}(-1)^{|j|+|i|}v_{j}\otimes
\rho_{ji}.
\end{align*}
Turning  to the $\ggg_e$-module $V$, we first have
\begin{lemma}\label{lemma: comodule}
The comodule structure on $V$ can be given as follows: 	
\begin{equation}\label{equ: comodule}
  \begin{array}{lcll}
\Delta^{(1)}:&V&\rightarrow&V\otimes \mathbb{C}[M_{e}]\\
&v_{i}&\mapsto&\sum_{j\in I; (j,i)\in J}(-1)^{|j|(|j|+|i|)}
v_{j}\otimes x_{\upsilon(j,i)}.
\end{array}
\end{equation}
%
%
\end{lemma}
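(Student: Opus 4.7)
\emph{Proof plan.} The approach is to restrict the $\ggg$-comodule structure on $V$ (encoded by the matrix coefficients $\rho_{ji}$ just above the statement) down to $\ggg_e$, and identify each restricted functional $\rho_{ji}|_{\ggg_e}$ in the dual basis $\{x_{\upsilon(a,b)}\}_{(a,b)\in J}$ from Lemma~\ref{lem: basis}. Once the restrictions are pinned down, substituting back into
$$\Delta^{(1)}(v_i)=\sum_{j=\bar 1}^{\overline m}v_j\otimes \rho_{ji}+\sum_{j=1}^n(-1)^{|j|+|i|}v_j\otimes \rho_{ji}$$
and collecting signs will yield the expression (\ref{equ: comodule}).

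The key calculation is $\rho_{ji}(\xi_{a,b})$ for a basis element $\xi_{a,b}=e_{\row(a),\row(b);\col(b)-\col(a)}$ of $\ggg_e$ with $(a,b)\in J$. Expanding via (\ref{hunny}) as a sum of elementary matrices and applying it to $v_i$ using $e_{h,k}\cdot v_i=\delta_{k,i}v_h$, the only surviving summand forces $\row(k)=\row(b)=\row(i)$ and $\col(k)=\col(i)$; combined with the constraints $\row(h)=\row(a)$ and $\col(k)-\col(h)=\col(b)-\col(a)$ on $(h,k)$ in the defining sum, this determines $h$ uniquely (when it lies in $I$), so $\xi_{a,b}\cdot v_i$ is either zero or a single basis vector $v_h$. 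Requiring $h=j$ translates to $\upsilon(a,b)=\upsilon(j,i)$, and bijectivity of $\upsilon$ then forces $(a,b)=(j,i)$. Hence $\rho_{ji}(\xi_{a,b})=\delta_{(a,b),(j,i)}$, so $\rho_{ji}|_{\ggg_e}$ vanishes unless $(j,i)\in J$ (matching the summation range in the target formula), in which case it is proportional to $x_{\upsilon(j,i)}$, with proportionality constant determined by the super pairing convention $\langle e_{i,j;r},x_{h,k;t}\rangle=\delta\,(-1)^{|i|+|j|}$ recorded in \S\ref{sec distri}.

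Substituting the resulting restrictions back and combining the $(-1)^{|j|+|i|}$ from the odd-block comodule conversion with the sign coming from the duality pairing, the two super signs collapse (trivially for $|j|=\bz$, by cancellation for $|j|=\bo$) into the uniform exponent $(-1)^{|j|(|j|+|i|)}$ appearing in (\ref{equ: comodule}). The main obstacle I anticipate is a careful sign audit reconciling the super-sign convention built into the duality pairing $\langle\cdot,\cdot\rangle$ on $\ggg_e^*\times\ggg_e$ with the sign convention used in converting the $\ggg$-module structure on $V$ to a right $\bbc[\ggg]$-comodule structure; any misstep in either produces an overall parity error, but once the bookkeeping is aligned the formula follows immediately from the pyramid description of $\ggg_e$ in Lemma~\ref{lem: basis}.
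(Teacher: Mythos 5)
Your plan matches the paper's proof in spirit---the paper simply says ``it follows by a straightforward computation''---and the structure you lay out (restrict each $\rho_{ji}$ to $\ggg_e$, evaluate on the pyramid basis $\xi_{a,b}$ of Lemma~\ref{lem: basis}, use bijectivity of $\upsilon$ to conclude $\rho_{ji}(\xi_{a,b})=\delta_{(a,b),(j,i)}$ and hence that $\rho_{ji}|_{\ggg_e}$ is supported exactly where $(j,i)\in J$) is the correct one.

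The one place you are imprecise is the sign audit. You describe the $(-1)^{|j|+|i|}$ coming from the odd block of the $\ggg$-comodule conversion and a sign from the duality pairing $\langle e_{a,b;r},x_{h,k;t}\rangle=\delta\,(-1)^{|a|+|b|}$ as ``collapsing, trivially for $|j|=\bz$ and by cancellation for $|j|=\bo$.'' In fact no cancellation should occur. The target exponent $|j|(|j|+|i|)$ equals $0$ when $|j|=\bz$ and equals $|j|+|i|$ when $|j|=\bo$, and these are precisely the exponents already present in the two summands of the displayed $\ggg$-comodule formula immediately above the lemma (the even-$j$ sum carries sign $1$, the odd-$j$ sum carries sign $(-1)^{|j|+|i|}$). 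So the formula of the lemma requires the clean identification $\rho_{ji}|_{\ggg_e}=x_{\upsilon(j,i)}$ with no extra scalar: $x_{\upsilon(j,i)}$ must be taken to be the ordinary (unnormalized) dual basis element satisfying $x_{\upsilon(j,i)}(\xi_{a,b})=\delta_{(a,b),(j,i)}$, with the factor $(-1)^{|a|+|b|}$ living inside the twisted super pairing $\langle\cdot,\cdot\rangle$ rather than inside the functional itself. If you instead imported that $(-1)^{|a|+|b|}$ as a proportionality constant relating $\rho_{ji}|_{\ggg_e}$ to $x_{\upsilon(j,i)}$, your final coefficient would be off by $(-1)^{|j|+|i|}$ in one of the two parity cases, and no cancellation would repair it. So: right route, but the bookkeeping story as stated would not close.
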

\begin{proof} It follows by a straightforward computation.
\end{proof}
\begin{remark} In \cite[\S 10]{BKl1} Brundan-Kleshchev gave some similar arguments for
Lie superalgebras of type $Q$.
\end{remark}

\subsubsection{} Generally, for a given bi-superalgebra $\ca$, and a (super) comodule
$C$ over $\ca$ with defining map $\Delta^{(1)}:C\rightarrow C\otimes \ca$ of the image
$\Delta^{(1)}(c)=\sum c_0\otimes c_1$, using $\ca$-comodule structure on $C$ one can  define an
$\ca$-comodule on $C\otimes C$ 
as
below (see \cite{Abe} for the details):
\begin{align}\label{2.10}
\Delta^{(2)}(c\otimes d ) = \sum c_{0}\otimes d_{0}\otimes c_{
1}d_1,\quad\forall  c, d \in C.
\end{align}
Inductively, one can endow $C^{\otimes d}$ with an $\ca$-comodule structure for any
positive integer $d$, of which the defining map is
\begin{align}\label{2.10'}
\Delta^{(d)}: C^{\otimes d}\rightarrow C^{\otimes d}\otimes \ca.
\end{align}

 Furthermore, $\ca^*$ is also a superalgebra. One can define  a super $\ca^*$-module
 on  $C$ as below
\begin{align}\label{qiuhe}
f.c = \sum\langle f, c_1\rangle c_0,~\text{where}~f\in \ca^{*}, c\in C.
\end{align}
Inductively, one can endow $C^{\otimes d}$ with $\ca^*$-module structure for any
positive integer $d$ arising from (\ref{2.10'}), with the initial step
(\ref{qiuhe}).

\subsubsection{} Let us return to the case of $\bbc[M_e]$.
Recall that $\mathbb{C}[M_{e}]$ is  a bi-superalgebra, and there is a (super)
comodule structure on $V^{\otimes d}$, with comodule map
$\Delta^{(d)}:\bbc[M_e]\rightarrow V^{\otimes d}\otimes \bbc[M_e]$, which is
defined inductively  with the initial step from  (\ref{equ: comodule}). To be precise, for any $\bft=(t_1,\ldots,t_d)\in I^d$ and any monomial basis vector
$v_{\mathbf{t}}:=v_{t_{1}}\otimes
v_{t_{2}}\otimes\cdots\otimes v_{t_{d}}\in V^{\otimes d}$, combining Lemma
\ref{lemma: comodule} along with (\ref{2.10})-(\ref{2.10'}), and   the induction
initiated from (\ref{equ: comodule}), we have that
\begin{align}\label{comodule precise}
\Delta^{(d)}(v_\bft) = \sum_{\overset{\bfs\in I^d}{\text{with }(\bfs,\bft)\in J^d}}(-1)^{|\epsilon_\bfs|(|\epsilon_\bfs|+|\epsilon_\bft|)}
\alpha(\epsilon_\bfs+\epsilon_\bft,\epsilon_\bfs)v_\bfs\otimes x_{\Upsilon(\bfs,\bft)},
\end{align}
where the requirement of the parameters in the sum implies that $\mathbf{s}=(s_1,s_2,\ldots,s_d)\in I^d$ satisfies $(s_q,t_q)\in J$ for
every $q=1,\ldots,d$, and
$$x_{\Upsilon(\mathbf{s},\bft)}=x_{s_1,t_1;\col(t_1)-\col(s_1)}x_{s_2,t_2;\col(t_2)-\col(s_2)}\cdots
x_{s_d,t_d;\col(t_q)-\col(s_d)}.$$

 Consequently, $V^{\otimes d}$
becomes a $\mathbb{C}[M_e]^*$-module. This representation of $\mathbb{C}[M_e]^*$  on
$V^{\otimes d}$ is denoted by $\omega_d$ which will play a critical role in the sequel.

\subsection{} For $(\bfi, \bfj)\in J^d$ (see
 \S\ref{sec morenota}), set
\begin{align}\label{equ: xi bfi bfj}
\Theta_{\mathbf{i},\mathbf{j}}:=
\omega_{d}(\xi_{\text{row}(\mathbf{i}),\text{row}(\mathbf{
j});\text{col}(\mathbf{j})-\text{col}(\mathbf{i})}).
\end{align}
Recall that $V$ has a basis $\{v_i\mid i\in I\}$. Then $V^{\otimes d}$ admits a basis $\{v_{\bfi}:=v_{i_1}\otimes\cdots\otimes v_{i_d}\mid \bfi=(i_1,\ldots,i_d) \in I^d\}$.

Recall in (\ref{2.5555}), we have defined $\xi_{s,t}$ for $(s,t)\in J$. For $(s_i,t_i)\in J$ with $i=1,\ldots,d$ and $\bfs=(s_1,\ldots,s_d)$, $\bft=(t_1,\ldots,t_d)$, we continue to define   $\xi_{\bfs,\bft}:=\xi_{s_1,t_1}\otimes\cdots\otimes \xi_{s_d,t_d}\in \End_\bbc(V)^{\otimes d}\cong \End_\bbc(V^{\otimes d})$.
Then we can precisely formulate  $\Theta_{\bfs,\bft}$ as below.

\begin{lemma}\label{Theta precise} Keep the notations as above and in (\ref{Upsilon}). Then
$$\Theta_{\bfi,\bfj}=\alpha(\epsilon_\bfi+\epsilon_\bfj, \epsilon_\bfi+\epsilon_\bfj)\sum_{\overset{\sigma\in \fraksd}{\Upsilon(\bfi,\bfj){\overset{\sigma}{\curvearrowright}}
\Upsilon(\bf{s},\bf{t})}}\nu(\bfi+\bfj;\sigma)\xi_{\bfs,\bft}.$$
\end{lemma}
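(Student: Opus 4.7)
My plan is to prove the identity by evaluating both sides on an arbitrary basis vector $v_\bfk=v_{k_1}\otimes\cdots\otimes v_{k_d}$ of $V^{\otimes d}$ and matching them term by term, indexed by permutations in $\fraksd$. The right-hand side, strictly read, is the sum over $\sigma\in\fraksd$ of $\nu(\bfi+\bfj,\sigma)\,\xi_{(\bfi,\bfj).\sigma}$ scaled by $\alpha(\epsilon_\bfi+\epsilon_\bfj,\epsilon_\bfi+\epsilon_\bfj)$, so the task is to identify each nonzero summand on the left with an orbit contribution on the right.

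First, unfold the left-hand side. Combining the definition (\ref{equ: xi bfi bfj}) with the $\bbc[M_e]^*$-module formula (\ref{qiuhe}) and the explicit comodule coproduct (\ref{comodule precise}), we get
\[
\Theta_{\bfi,\bfj}(v_\bfk)=\sum_{\substack{\bfs\in I^d\\ (\bfs,\bfk)\in J^d}}(-1)^{|\epsilon_\bfs|(|\epsilon_\bfs|+|\epsilon_\bfk|)}\alpha(\epsilon_\bfs+\epsilon_\bfk,\epsilon_\bfs)\,\bigl\langle \xi_{\text{row}(\bfi),\text{row}(\bfj);\text{col}(\bfj)-\text{col}(\bfi)},\,x_{\Upsilon(\bfs,\bfk)}\bigr\rangle\, v_\bfs.
\]
The pairing formula (\ref{angle value}) then forces $(\bfs,\bfk)$ to lie in the $\fraksd$-orbit of $(\bfi,\bfj)$, and for each witness $\sigma$ with $\Upsilon(\bfi,\bfj)\overset{\sigma}{\curvearrowright}\Upsilon(\bfs,\bfk)$ the pairing evaluates to $\alpha(\epsilon_\bfi+\epsilon_\bfj,\epsilon_\bfi+\epsilon_\bfj)\,\nu(\bfi+\bfj,\sigma)$.

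For the right-hand side, for each $\sigma\in\fraksd$ set $(\bfs,\bft)=(\bfi,\bfj).\sigma$ and expand $\xi_{\bfs,\bft}=\xi_{s_1,t_1}\otimes\cdots\otimes\xi_{s_d,t_d}$. Applying this operator to $v_\bfk$ via the super tensor rule produces a factor $\alpha(\epsilon_\bfs+\epsilon_\bft,\epsilon_\bfk)$ multiplied by $\bigotimes_q \xi_{s_q,t_q}(v_{k_q})$. Each $\xi_{s_q,t_q}$ is expanded through Lemma \ref{lem: basis} as a sum of elementary matrices $e_{h,k}$ with $\text{row}(h)=\text{row}(s_q)$, $\text{row}(k)=\text{row}(t_q)$, $\text{col}(k)-\text{col}(h)=\text{col}(t_q)-\text{col}(s_q)$. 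Acting on $v_{k_q}$ selects the unique summand whose second index equals $k_q$ (which forces $|k_q|=|t_q|$ and determines the resulting index uniquely), and after re-indexing the product by the permutation $\sigma$ one recognises exactly a nonzero term in the LHS sum.

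The main obstacle is pure sign bookkeeping. Four distinct sign factors enter on the LHS -- the explicit $(-1)^{|\epsilon_\bfs|(|\epsilon_\bfs|+|\epsilon_\bfk|)}$, the two $\alpha$-factors from (\ref{comodule precise}), and the sign coming from the pairing formula -- while the RHS carries the super-tensor $\alpha$ along with $\nu(\bfi+\bfj,\sigma)$. After imposing the parity identifications dictated by $(\bfs,\bft)=(\bfi,\bfj).\sigma$, namely $|s_q|=|i_q|$, $|t_q|=|j_q|$, $|k_q|=|t_q|$, all these signs must telescope onto the single prefactor $\alpha(\epsilon_\bfi+\epsilon_\bfj,\epsilon_\bfi+\epsilon_\bfj)\,\nu(\bfi+\bfj,\sigma)$. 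The reconciliation uses the bilinearity of $\alpha$ in each argument and the cocycle behaviour of $\nu$ under the diagonal right action of $\fraksd$; the delicate step is the cancellation of the quadratic parity exponents, and this is where the bulk of the computation lies.
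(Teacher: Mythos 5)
Your approach is the same as the paper's: evaluate $\Theta_{\bfi,\bfj}$ on a basis vector $v_\bft$ via the formula \eqref{qiuhe} together with the explicit coproduct \eqref{comodule precise}, invoke the pairing formula \eqref{angle value} to restrict to the $\fraksd$-orbit, and then compare with the action of $\xi_{\bfs,\bft}$ on the same vector. The unpacking of the left-hand side that you describe is essentially identical to the paper's computation.

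The gap is that you explicitly defer the sign bookkeeping (``this is where the bulk of the computation lies'') without carrying it out, and in the super setting this is precisely where the proof lives or dies. The paper's proof makes this painless by one observation you do not record: when the pairing $\langle \xi_{\row(\bfi),\row(\bfj);\col(\bfj)-\col(\bfi)}, x_{\Upsilon(\bfs,\bft)}\rangle$ is nonzero one has $\epsilon_\bfs=\epsilon_\bfi.\sigma$ and $\epsilon_\bft=\epsilon_\bfj.\sigma$, hence the \emph{total} parities agree: $|\epsilon_\bfs|=|\epsilon_\bfi|$ and $|\epsilon_\bft|=|\epsilon_\bfj|$. Consequently the two quadratic exponents that enter---the $(-1)^{|\epsilon_\bfs|(|\epsilon_\bfi|+|\epsilon_\bfj|)}$ coming from the $\bar\otimes$ rule and the $(-1)^{|\epsilon_\bfs|(|\epsilon_\bfs|+|\epsilon_\bft|)}$ from \eqref{comodule precise}---become equal and cancel mod $2$, leaving exactly the prefactor $\alpha(\epsilon_\bfi+\epsilon_\bfj,\epsilon_\bfi+\epsilon_\bfj)\,\nu(\bfi+\bfj,\sigma)$ once the $\alpha(\epsilon_\bfs+\epsilon_\bft,\epsilon_\bfs)$ factor is absorbed into the action of $\xi_{\bfs,\bft}$ via $\xi_{\bfh,\bfk}v_\bft=\delta_{\bfk,\bft}\alpha(\epsilon_\bfh+\epsilon_\bfk,\epsilon_\bft)v_\bfh$. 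Without pinning this down, ``the signs must telescope'' remains an assertion rather than a proof. Also note a small but real slip: the parity identification forced by $(\bfs,\bft)=(\bfi,\bfj).\sigma$ is $|s_q|=|i_{\sigma(q)}|$ and $|t_q|=|j_{\sigma(q)}|$, not ``$|s_q|=|i_q|$''; only the \emph{unordered} multisets (and therefore the sums $|\epsilon_\bfs|$, $|\epsilon_\bft|$) are permutation-invariant, and that is precisely the fact one needs.
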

\begin{proof} For any basis element $v_{\bft}\in V^{\otimes d}$ with $\bft\in I^d$, by (\ref{comodule precise}) we have
\begin{align*}
\Theta_{\bfi,\bfj} v_\bft&=(1\bar\otimes \xi_{\text{row}(\mathbf{i}),\text{row}(\mathbf{
j});\text{col}(\mathbf{j})-\text{col}(\mathbf{i})})\Delta^{(d)}(v_\bft)\cr
&= \sum_{\overset{\bfs\in I^d}{\text{with }(\bfs,\bft)\in J^d}}(-1)^{|\epsilon_\bfs|(|\epsilon_\bfi|+|\epsilon_\bfj|)+|\epsilon_\bfs|
(|\epsilon_\bfs|+|\epsilon_\bft|)}
\alpha(\epsilon_\bfs+\epsilon_\bft,\epsilon_\bfs)
\langle \xi_{\text{row}(\mathbf{i}),\text{row}(\mathbf{
j});\text{col}(\mathbf{j})-\text{col}(\mathbf{i})}, x_{\Upsilon(\bfs,\bft)}\rangle
v_\bfs.
\end{align*}
Note that
$\langle\xi_{\text{row}(\mathbf{i}),
\text{row}(\mathbf{j});\text{col}(\mathbf{j})-\text{col}(\mathbf{i})}, x_{\Upsilon(\bfs,\bft)}\rangle$
is nonzero if and only if there exists $\sigma \in \fraksd$ such that $\Upsilon(\bfi,\bfj)\overset{\sigma}{\curvearrowright}
\Upsilon(\bf{s},\bf{t})$. In this case, by (\ref{angle value}) we have $\langle\xi_{\text{row}(\mathbf{i}),
\text{row}(\mathbf{j});\text{col}(\mathbf{j})-\text{col}(\mathbf{i})}, x_{\Upsilon(\bfs,\bft)}\rangle=\alpha(\epsilon_\bfi+\epsilon_\bfj, \epsilon_\bfi+\epsilon_\bfj)\nu(\bfi+\bfj,\sigma)$.
So
  \begin{align*}
  \Theta_{\bfi,\bfj} v_\bft&=\alpha(\epsilon_\bfi+\epsilon_\bfj, \epsilon_\bfi+\epsilon_\bfj)\sum_{\overset{\sigma\in \fraksd}{\Upsilon(\bfi,\bfj){\overset{\sigma}{\curvearrowright}}
\Upsilon(\bf{s},\bf{t})}}
\alpha(\epsilon_\bfs+\epsilon_\bft,\epsilon_\bfs)\nu(\bfi+\bfj,\sigma)
v_\bfs.
\end{align*}
Keep in mind that
\begin{align}\label{multimatrix action}
e_{\bfh,\bfk}v_\bft=\delta_{\bfk,\bft}
\alpha(\epsilon_\bfh+\epsilon_\bfk,\epsilon_\bft)v_\bfh
\end{align}
for $e_{\bfh,\bfk}:=e_{h_1,k_1}\otimes\cdots\otimes e_{h_d,k_d}$ with
$\bfh=(h_1,\ldots,h_d)$ and $\bfk=(k_1,\ldots,k_d)$.
By a straightforward computation, we have
$$\xi_{\bfh,\bfk} v_\bft=\delta_{\bfk,\bft}\alpha(\epsilon_\bfh+\epsilon_\bfk,\epsilon_\bft)v_\bfh.$$
This proves  the formula for $\Theta_{\bfi,\bfj}$.
\end{proof}

\begin{lemma}\label{flu2} The representation $\omega_d$ is faithful.
\end{lemma}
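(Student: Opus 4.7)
The plan is to reduce faithfulness of $\omega_d$ to the observation that the matrix coefficients of the $\bbc[M_e]$-comodule $V^{\otimes d}$ exhaust a basis of the graded piece $\bbc[M_e]_d$. First I would note that the coaction (\ref{comodule precise}) lands entirely in $V^{\otimes d}\otimes\bbc[M_e]_d$, since every summand carries a product of exactly $d$ linear generators of $\bbc[M_e]$. Consequently the representation $\omega_d$ factors through the finite-dimensional restriction $\omega_d\colon\bbc[M_e]_d^{*}\to\End_{\bbc}(V^{\otimes d})$, and it suffices to check injectivity of this restricted map.

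Suppose then $\xi\in\bbc[M_e]_d^{*}$ satisfies $\omega_d(\xi)=0$. Applying $\omega_d(\xi)$ to a basis vector $v_{\bft}$ and reading off the coefficient of $v_{\bfs}$ via (\ref{comodule precise}), that coefficient is
\[
(-1)^{|\epsilon_{\bfs}|(|\epsilon_{\bfs}|+|\epsilon_{\bft}|)}\,\alpha(\epsilon_{\bfs}+\epsilon_{\bft},\epsilon_{\bfs})\,\langle\xi,x_{\Upsilon(\bfs,\bft)}\rangle
\]
when $(\bfs,\bft)\in J^{d}$, and vanishes identically otherwise. Because the scalar prefactor has absolute value one, the hypothesis $\omega_d(\xi)=0$ forces $\langle\xi,x_{\Upsilon(\bfs,\bft)}\rangle=0$ for every $(\bfs,\bft)\in J^{d}$.

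To finish, I would invoke (\ref{base}): a basis of $\bbc[M_e]_d$ is given by the ordered monomials $x_{\bfi,\bfj,\bfk}$ with $(\bfi,\bfj,\bfk)$ running over a set of orbit representatives for $K^{d}/\fraksd$, and any reordering of the factors in such a monomial differs from a representative only by an overall sign thanks to the super-commutativity of $\bbc[M_e]$. Since $\Upsilon\colon J^{d}\to K^{d}$ is a bijection, every basis element of $\bbc[M_e]_d$ appears up to sign as $x_{\Upsilon(\bfs,\bft)}$ for some $(\bfs,\bft)\in J^{d}$. Hence $\xi$ pairs to zero with a full basis of $\bbc[M_e]_d$, so $\xi=0$.

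The only delicate point is the bookkeeping of the $\bbz_2$-signs that appear in (\ref{comodule precise}) and in the super-commutation of generators of $\bbc[M_e]$; but each of these sign factors is $\pm 1$, so none can accidentally force a matrix coefficient to vanish, and the proof proceeds along the same lines as its classical analogue \cite[Lemma 2.1]{BKl}.
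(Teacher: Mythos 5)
Your proof is correct and rests on the same underlying computation as the paper's: the explicit comodule formula \eqref{comodule precise} exhibits the matrix coefficients of $\omega_d(\xi)$ as $\pm\langle\xi, x_{\Upsilon(\bfs,\bft)}\rangle$, and since the monomials $x_{\Upsilon(\bfs,\bft)}$ for $(\bfs,\bft)\in J^d$ span $\bbc[M_e]_d$, this forces $\xi=0$. The paper packages this computation into Lemma \ref{Theta precise} and then observes linear independence of the images $\Theta_{\bfi,\bfj}$ (supported on disjoint $\fraksd$-orbits), whereas you argue directly that the kernel is trivial; the two formulations are logically equivalent and rely on the same ingredients. Your inline sign is missing the extra factor $(-1)^{|\epsilon_\bfs|(|\epsilon_\bfi|+|\epsilon_\bfj|)}$ coming from the $\bar\otimes$ pairing (compare the first display in the proof of Lemma \ref{Theta precise}), but you correctly note that any such factor is $\pm 1$ and cannot affect the vanishing argument, so this is a cosmetic imprecision rather than a gap.
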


\begin{proof} By Lemma \ref{Theta precise}, it follows that $\omega_d$ maps the
basis elements
     $\xi_{\text{row}(\mathbf{i}),\text{row}(\mathbf{j});
     \text{col}(\mathbf{j})-\text{col}(\mathbf{i})}$ of $\mathbb C[M_e]_d^*$ to
     linearly independent elements.  
\end{proof}

Note that $\End_\bbc(V^{\otimes d})\cong \End_\bbc(V)^{\otimes d}$. Then we will identify
both of them.  For any $E\in
\End_\bbc(V^{\otimes d})$, we can write $E=\sum_{(\bfi,\bfj)\in
I^d\times I^d}a_{\bfi,\bfj}e_{\bfi,\bfj}$ for $a_{\bfi,\bfj}\in \bbc$ and
$e_{\bfi,\bfj}:=e_{i_1,j_1}\otimes\cdots\otimes e_{i_d,j_d}$ with
$\bfi=(i_1,\ldots,i_d)$
and $\bfj=(j_1,\ldots,j_d)$.
Recall that we have already defined the algebra
$\mathbb{C}_{n}[x_{1},\ldots,x_{d}]\rrtimes\mathbb{C}{\fraksd}$, and its action on
$V^{\otimes d}$ in the introduction.

\begin{lemma}\label{flu3} Keep the notations as above. Suppose
$E=\sum_{(\bfi,\bfj)\in I^d\times I^d} a_{\bfi,\bfj}e_{\bfi,\bfj}\in
\End_\bbc(V^{\otimes
d})$.
If $E$ commutes with the action of  $\mathbb{C}_n[x_{1},\ldots,x_{d}]\rrtimes
\mathbb{C}{\fraksd}$, then those  coefficients $a_{\bfi,\bfj}$ satisfy the
following conditions:
\begin{itemize}
	\item[(1)] $a_{\bfi,\bfj} = 0$ if there exit $k\in \{1,\ldots,d\}$ such that
$\operatorname{col}(i_k) > \operatorname{col}(j_k)$. Moreover, the
nonzero terms come from the parameters $(\bfi,\bfj)\in J^d$.
	\item[(2)]
$a_{\mathbf{i},\mathbf{j}}=\nu(\epsilon_\bfi+\epsilon_\bfj, \sigma) a_{\bfi.\sigma,\bfj.\sigma}$
for all $\sigma\in {\fraksd}$.
	\item[(3)] $a_{\bfi,\bfj}=\nu(\epsilon_\bfi+\epsilon_\bfj, \sigma)a_{\bfs,\bft}$ if there exists some $\sigma\in \fraksd$ such that $\Upsilon(\bfi,\bfj){\overset{\sigma}{\curvearrowright}}
\Upsilon(\bf{s},\bf{t})$.
\end{itemize}
\end{lemma}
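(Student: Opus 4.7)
The plan is to extract all four constraints from two families of commutation relations: $[E, \psi_d(s_j)] = 0$ for $j = 1, \ldots, d-1$, and $[E, \bar\psi_d(x_k)] = 0$ for $k = 1, \ldots, d$. In each case we apply both sides to a basis vector $v_\bft$ and read off the coefficient of each basis vector $v_{\bfi'}$ of $V^{\otimes d}$, using \eqref{multimatrix action} to unpack how $E = \sum a_{\bfi,\bfj} e_{\bfi,\bfj}$ acts.

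\emph{Step 1 (proof of (2)).} Since $\psi_d(s_j)$ is precisely the graded transposition of the $j$-th and $(j+1)$-th tensor factors carrying the super-sign encoded in $\nu$, a direct matrix-coefficient comparison shows that $[E, \psi_d(s_j)] = 0$ is equivalent to $a_{\bfi, \bfj} = \nu(\epsilon_\bfi + \epsilon_\bfj, s_j)\, a_{\bfi.s_j, \bfj.s_j}$. Since $s_1, \ldots, s_{d-1}$ generate $\fraksd$ and $\nu$ satisfies a cocycle-type identity under composition, iterating over generators yields (2) for arbitrary $\sigma \in \fraksd$.

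\emph{Step 2 (proofs of (1) and (3)).} The element $\bar\psi_d(x_k)$ acts on the $k$-th factor as $e$, shifting $t_k$ one column to the left within its row of the pyramid $\Xi$ (or annihilating $v_\bft$ if $t_k$ is already leftmost); since $e \in \ggg_\bz$ is even, no super-signs arise from this shift. Comparing coefficients of $v_{\bfi'}$ in $x_k E(v_\bft) = E x_k(v_\bft)$ produces two key identities. \emph{Shift invariance}: when $i'_k$ admits a right neighbor in its row and $t_k$ is shiftable, we obtain $a_{\bfi, \bfj} = a_{\bfi^{(k,1)}, \bfj^{(k,1)}}$ whenever both simultaneous left-shifts of $(i_k, j_k)$ are defined. \emph{Vanishing}: when $i'_k$ is rightmost in its row (so $i'_k \in \{\overline{m}, n\}$) while $t_k$ is shiftable, the left side vanishes while the right contributes $a_{\bfi', \bft^{(k,1)}}$, forcing $a_{\bfi, \bfj} = 0$ whenever $i_k$ is rightmost and $j_k$ is not. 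Claim (3) is then immediate from shift invariance: if $\Upsilon(\bfi, \bfj) = \Upsilon(\bfs, \bft)$, the pairs $(i_k, j_k)$ and $(s_k, t_k)$ share rows and column-differences for every $k$, so they are linked by a sequence of coordinate-wise simultaneous left-shifts. For (1), assume $\col(i_k) > \col(j_k)$; both rows of $\Xi$ end at column $n$, so we can right-shift $(i_k, j_k)$ in tandem (using shift invariance in reverse) until $i_k$ reaches column $n$. At that point $j_k$ still has column $<n$ and is therefore not rightmost, so the vanishing identity delivers $a_{\bfi, \bfj} = 0$.

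\emph{Step 3 (proof of (4)).} This composes (2) and (3) via the $\fraksd$-equivariance of $\Upsilon$: if $\Upsilon(\bfi, \bfj) \overset{\sigma}{\curvearrowright} \Upsilon(\bfs, \bft)$, then $\Upsilon(\bfi.\sigma, \bfj.\sigma) = \Upsilon(\bfs, \bft)$, so applying (2) followed by (3) produces (4). The main obstacles will be the super-sign bookkeeping in Step 1 and the careful case trichotomy underlying the coefficient comparison in Step 2 — in particular, separating shift invariance from vanishing, and exploiting the fact that right-alignment of $\Xi$ (each row ending uniformly at column $n$) is precisely what allows the reduction of (1) to the boundary vanishing case.
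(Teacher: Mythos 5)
Your proof is correct and follows essentially the same route as the paper: part (2) from the commutation with the graded transpositions $\psi_d(s_j)$, part (3) from the commutation with $z_k=\bfone^{\otimes(k-1)}\otimes e\otimes\bfone^{\otimes(d-k)}$ yielding the shift identity $a_{\bfs,\bfj-\imath_k}=a_{\bfs+\imath_k,\bfj}$, and part (4) from (2), (3) and the $\fraksd$-equivariance of $\Upsilon$. The paper dismisses (1) and (2) as "straightforward computation" and only writes out (3); you instead derive (1) from the same $z_k$-comparison by carefully separating the shift identity (when both shifted indices exist) from its boundary ``vanishing'' companion (when $i'_k$ is rightmost but $t_k$ still shiftable), and then right-shifting until $i_k$ hits column $n$ using the fact that both pyramid rows are right-aligned. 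That is a clean and valid way to make the paper's unstated ``straightforward'' step precise, and it costs nothing extra since the same coefficient comparison is already needed for (3); the paper's author likely had an equivalent short argument in mind. One small point worth making explicit if you write this up: the sign factors $\alpha(\cdot,\cdot)$ appearing on both sides of the $z_k$-comparison all reduce to $1$ because $\epsilon_{\bfi\pm\imath_k}=\epsilon_\bfi$ (shifting within a pyramid row preserves parity), which is what justifies dropping signs in the shift and vanishing identities.
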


\begin{proof} 
As to Part (2), first note that the image of $v_\bfj$ under $E$ is $\sum_{\bfi\in I^d} a_{\bfi,\bfj}v_\bfi$. Applying $\sigma\in {\fraksd}$, we get
$\sum_{\bfi\in I^d}\nu(\epsilon_\bfi+\epsilon_\bfj, \sigma) a_{\bfi,\bfj}v_{\bfi.\sigma}$. This must equal the image of $v_{\bfi.\sigma}$ under $E$, i.e.,
$\sum_{\bfi\in I^d} a_{\bfi.\sigma,\bfj.\sigma}v_{\bfi.\sigma}$. Equating coefficients gives $a_{\mathbf{i},\mathbf{j}}=\nu(\epsilon_\bfi+\epsilon_\bfj, \sigma) a_{\bfi.\sigma,\bfj.\sigma}$. This proves the statement in (2).

Now we proceed with arguments for Parts (1) and (3).  Take $z_k=1\otimes\cdots \otimes \overset{k\text{th}}{e}\otimes \cdots\otimes 1\in \End_\bbc(V)^{\otimes d}$,  of which all the components in the tensor product are the identity except the $k$th component equal
to $e$. Then $z_k$ is naturally an even element in $\End_\bbc(V^{\otimes d})$.
From the assumption $z_k\circ E=E\circ z_k$, we verify the statement in (3).

For this, we first adopt some notations. For $\ell\in I$, we set
\begin{align*}
e_{\ell-\imath,\ell}:=\begin{cases} 0, &\mbox{ if } \ell=\bar 1, \mbox{ or } 1;\cr
e_{\overline{j-1},\bar j} &\mbox{ if } \ell=\bar j\in \{\bar 2,\ldots,\overline{m}\};\cr
e_{j-1,j} &\mbox{ if } \ell=j\in\{2,\ldots,n\}.
\end{cases}
\end{align*}
    In the above, we are implicitly using the notation $\ell-\imath$. In the same spirit, we can talk about $\ell+\imath$.
Furthermore, for any $\bfj=(j_1,\ldots,j_d)\in I^d$ and $k\in\{1,\ldots,d\}$, we set
\begin{align*}
e_{\bfj-\imath_k,\bfj}
:=e_{j_1,j_1}\otimes e_{j_2,j_2}\otimes \cdots \otimes e_{j_{k-1},j_{k-1}}\otimes e_{j_k-\imath,j_k}\otimes e_{j_{k+1},j_{k+1}}\otimes\cdots\otimes e_{j_d,j_d},
\end{align*}
and
\begin{align}\label{eq: imath}
v_{\bfj-\imath_k}
:=\begin{cases} 0, &\mbox{ if } j_k=\bar 1\mbox{ or } 1;\cr
v_{j_1}\otimes v_{j_2}\otimes \cdots \otimes v_{j_{k-1}}\otimes v_{j_k-\imath}\otimes v_{j_{k+1}}\otimes\cdots\otimes v_{j_d}, &\mbox{ otherwise}.
\end{cases}
\end{align}
Using (\ref{multimatrix action}), we have $z_k(v_\bfj)=\alpha(\epsilon_{\bfj-\imath_k}+\epsilon_\bfj,\epsilon_\bfj)v_{\bfj-\imath_k}$. Note that $\alpha(\epsilon_{\bfj-\imath_k}+\epsilon_\bfj,\epsilon_\bfj)=1$, and consequently
\begin{align}\label{zk action}
z_k(v_\bfj)=v_{\bfj-\imath_k}.
\end{align}

Next, for any given  $v_\bfj\in V^{\otimes d}$, the commutation concerning $E$ assures that
$E\circ z_k(v_\bfj)=z_k\circ E(v_\bfj)$.
By (\ref{zk action}) and \eqref{multimatrix action} we have
\begin{align*}
E\circ z_k(v_\bfj)&=\sum_{\bfs,\bft\in I^d}a_{\bfs,\bft}e_{\bfs,\bft}z_k(v_\bfj) \cr
&=\sum_{\bfs,\bft\in I^d}a_{\bfs,\bft}e_{\bfs,\bft} v_{\bfj-\imath_k}\cr
&=\sum_{\bfs\in I^d}a_{\bfs,\bfj-\imath_k} \alpha(\epsilon_\bfs+\epsilon_{\bfj-\imath_k},\epsilon_{\bfj-\imath_k})v_\bfs.
\end{align*}
On the other hand, by (\ref{multimatrix action}) again we have
\begin{align*}
z_k\circ E(v_\bfj)&=z_k(\sum_{\bfs\in I^d}a_{\bfs,\bfj}\alpha(\epsilon_\bfs+\epsilon_\bfj,\epsilon_\bfj)v_\bfs) \cr
&=\sum_{\bfs\in I^d}a_{\bfs,\bfj}\alpha(\epsilon_\bfs+\epsilon_\bfj,\epsilon_\bfj)z_k(v_\bfs)\cr
&=\sum_{\bfs\in I^d}a_{\bfs,\bfj}\alpha(\epsilon_\bfs+\epsilon_\bfj,\epsilon_\bfj) v_{\bfs-\imath_k}.
\end{align*}
Comparing  $E\circ z_k(v_\bfj)$ with $z_k\circ E(v_\bfj)$, we have
$$a_{\bfs,\bfj-\imath_k}\alpha(\epsilon_\bfs+\epsilon_{\bfj-\imath_k},\epsilon_{\bfj-\imath_k})
=a_{\bfs+\imath_k,\bfj}\alpha(\epsilon_{\bfs+\imath_k}+\epsilon_\bfj,\epsilon_\bfj).$$
Note that  $\alpha(\epsilon_{\bfs+\imath_k}+\epsilon_\bfj,\epsilon_\bfj)=\alpha(\epsilon_\bfs+\epsilon_\bfj,\epsilon_\bfj)=\alpha(\epsilon_\bfs+\epsilon_{\bfj-\imath_k},\epsilon_{\bfj-\imath_k})$.
So we finally have  $a_{\bfs,\bfj-\imath_k}=a_{\bfs+\imath_k,\bfj}$ for any $\bfs, \bfj\in I^d$ and $k\in\{1,\ldots, d\}$  whenever $\bfs+\imath_k$ and $\bfj-\imath_k$ make sense. Therefore, $a_{\bfs,\bfj}=a_{\bfs-\imath_k,\bfj-\imath_k}$ for any $\bfs, \bfj\in I^d$ and $k\in\{1,\ldots, d\}$ whenever $\bfs-\imath_k$ and $\bfj-\imath_k$ make sense, and $a_{\bfs,\bfj}=0$ otherwise.
Then Parts (1) and (3) follow easily from this and the statement in (2).

The proof is completed.
\end{proof}

As an immediate consequence, we have the following result.
\begin{corollary} If $E\in \End_\bbc(V^{\otimes d})$ commutes with the action of
$\mathbb{C}_{n}[x_{1},\ldots,x_{d}]\rrtimes
\mathbb{C}{\fraksd}$, then $E$ is a linear combination of $\Theta_{\bfi,\bfj}$ with
$(\bfi,\bfj)\in J^d$ in (\ref{equ: xi bfi bfj}).
\end{corollary}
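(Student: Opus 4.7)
The plan is to show this corollary is a direct consequence of the preceding two lemmas. Writing $E=\sum_{(\bfi,\bfj)\in I^d\times I^d}a_{\bfi,\bfj}e_{\bfi,\bfj}$, Lemma~\ref{flu3}(1) immediately forces the support of $E$ to be contained in $J^d$, so we may rewrite
\[
E=\sum_{(\bfi,\bfj)\in J^d}a_{\bfi,\bfj}\,e_{\bfi,\bfj}.
\]

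Next I would partition the index set $J^d$ into its $\fraksd$-orbits under the diagonal action. Because $\Upsilon\colon J^d\to K^d$ is $\fraksd$-equivariant and bijective, these orbits are in bijection with $K^d/\fraksd$; pick a set of orbit representatives $(\bfi,\bfj)\in J^d/\fraksd$. Lemma~\ref{flu3}(4) then tells us that within each orbit every coefficient $a_{\bfs,\bft}$ is determined by the single value $a_{\bfi,\bfj}$ via the cocycle $\nu(\epsilon_\bfs+\epsilon_\bft,\sigma)$ whenever $\Upsilon(\bfi,\bfj)\overset{\sigma}{\curvearrowright}\Upsilon(\bfs,\bft)$. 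Consequently
\[
E=\sum_{(\bfi,\bfj)\in J^d/\fraksd}a_{\bfi,\bfj}\sum_{\substack{\sigma\in\fraksd\\ \Upsilon(\bfi,\bfj){\overset{\sigma}{\curvearrowright}}\Upsilon(\bfs,\bft)}}\nu(\epsilon_\bfi+\epsilon_\bfj,\sigma)\,e_{\bfs,\bft},
\]
where the inner sum is (up to the issue of stabilizers) the orbit sum attached to $(\bfi,\bfj)$.

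Finally, I would match this orbit sum against the explicit formula of Lemma~\ref{Theta precise}, which asserts
\[
\Theta_{\bfi,\bfj}=\alpha(\epsilon_\bfi+\epsilon_\bfj,\epsilon_\bfi+\epsilon_\bfj)\sum_{\substack{\sigma\in \fraksd\\ \Upsilon(\bfi,\bfj){\overset{\sigma}{\curvearrowright}}\Upsilon(\bfs,\bft)}}\nu(\bfi+\bfj;\sigma)\,\xi_{\bfs,\bft}.
\]
Since $\xi_{\bfs,\bft}$ coincides with $e_{\bfs,\bft}$ under the identification $\End_\bbc(V)^{\otimes d}\cong\End_\bbc(V^{\otimes d})$, each orbit contribution to $E$ is a scalar multiple of some $\Theta_{\bfi,\bfj}$, and we conclude
\[
E=\sum_{(\bfi,\bfj)\in J^d/\fraksd}\alpha(\epsilon_\bfi+\epsilon_\bfj,\epsilon_\bfi+\epsilon_\bfj)^{-1}a_{\bfi,\bfj}\,\Theta_{\bfi,\bfj},
\]
as required.

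The only potentially delicate point will be the possible presence of nontrivial stabilizers of $(\bfi,\bfj)$ in $\fraksd$: one must check that the normalization in the orbit sum is consistent (i.e.\ $\nu(\epsilon_\bfi+\epsilon_\bfj,\sigma)=1$ for any stabilizing $\sigma$, so no orbit representative produces a vanishing $\Theta_{\bfi,\bfj}$), but this compatibility is precisely what Lemma~\ref{flu3}(2)--(3) and Lemma~\ref{Theta precise} already encode, so no new argument is needed.
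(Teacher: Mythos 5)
Your proof is correct and is essentially the argument the paper has in mind; the paper states the corollary as "an immediate consequence" of Lemma~\ref{flu3} without writing out details, and your reconstruction supplies exactly the missing bookkeeping: Lemma~\ref{flu3}(1) restricts the support of $E$ to $J^d$, Lemma~\ref{flu3}(4) forces the coefficients on each $\fraksd$-orbit to agree up to the sign $\nu$, and Lemma~\ref{Theta precise} identifies that orbit sum (up to scalar) with $\Theta_{\bfi,\bfj}$.

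Two small points worth being careful about. First, the phrase "$\xi_{\bfs,\bft}$ coincides with $e_{\bfs,\bft}$ under the identification $\End_\bbc(V)^{\otimes d}\cong\End_\bbc(V^{\otimes d})$" is slightly misleading: $\xi_{s,t}$ is a priori a basis vector of $\ggg_e$ (a sum of elementary matrices in the shape of \eqref{hunny}), and the coincidence $\xi_{\bfh,\bfk}v_{\bft}=\delta_{\bfk,\bft}\alpha(\epsilon_\bfh+\epsilon_\bfk,\epsilon_\bft)v_\bfh$ is a computed fact within the proof of Lemma~\ref{Theta precise}, not just the standard tensor-endomorphism identification; it is better to cite that displayed formula directly. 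Second, your final closed formula $E=\sum\alpha^{-1}a_{\bfi,\bfj}\Theta_{\bfi,\bfj}$ is off by a factor of $|\mathrm{Stab}_{\fraksd}(\bfi,\bfj)|$, since the sum defining $\Theta_{\bfi,\bfj}$ runs over all $\sigma\in\fraksd$ rather than over orbit elements; this does not affect the conclusion (which only claims "linear combination"), and your closing remark about stabilizers and $\nu$-consistency correctly identifies the only place where cancellation could intervene. With those caveats noted, the argument is sound.
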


\subsection{Double centralizer property and the proof of Theorem
\ref{doublecentralizerin}}\label{2.5}
Let us  proceed with the proof of double centralizer property. Keep in
mind that $V=\bbc^{m|n}$, $\ggg=\gl(V)$ and $e\in\ggg_\bz$ is a principal nilpotent
element. Recall that $\phi_d:U(\ggg_e) \rightarrow
\operatorname{End}_{\mathbb C}(V^{\otimes d})$ is  a representation arising from
the natural action induced by  $\ggg_e$  on the tensor space. By definition,
$\phi_d$ is
precisely the composite map $\omega_d \circ \pi_d$. By virtue of the arguments
in the proof of Lemmas  \ref{flu1}  and  \ref{flu2}, the image $\im\phi_d$ is
isomorphic to  $\mathbb C[M_e]_d^*$. Let  $ \bar\psi_d:\mathbb
C_n[x_1,\dots,x_d]\rrtimes \mathbb C {\fraksd}\rightarrow
\operatorname{End}_{\mathbb C}(V^{\otimes d})^{\op}$ be the homomorphism arising
from the
right action of   $\mathbb C_n [x_1,\dots,x_d]\rrtimes \mathbb C {\fraksd}$  on
$V^{\otimes d}$. Thus, we have the following  maps
\begin{equation*}\label{grgrpre}
U(\ggg_e) \stackrel{\phi_d}{\longrightarrow}
\operatorname{End}_{\mathbb C}(V^{\otimes d})
\stackrel{\bar\psi_d}{\longleftarrow}
\mathbb C_n[x_1,\dots,x_d]\rrtimes \mathbb C {\fraksd}.
\end{equation*}

The Dynkin $\mathbb{Z}$-grading on $\ggg$ (see \S\ref{sec finite W})
 extends to a grading
$U(\ggg) = \bigoplus_{r \in \mathbb{Z}} U (\ggg)(r)$
on its universal enveloping algebra, and $U(\ggg_e)$
is a graded subalgebra.
Consider $V$ as a graded module
by declaring
that each $v_i$ is of degree $(n- \operatorname{col}(i))$.
There are induced gradings
on $V^{\otimes d}$ and its endomorphism algebra
 $\operatorname{End}_{\mathbb C}(V^{\otimes d})$,
so that the map $\phi_d$ is then a homomorphism of graded algebras.
Also define a grading on  $\mathbb C_n[x_1,\dots,x_d]\rrtimes \mathbb C {\fraksd}
$
by declaring that each $x_i$ is of degree $1$ and each $w \in {\fraksd}$ is of
degree
$0$.
The map $\bar\psi_d$ is then a homomorphism of graded algebras as well.

Now we are in the position to prove Theorem \ref{doublecentralizerin}.
\begin{proof} Recall that $e\in \ggg_\bz$ is principal
nilpotent.
	The second equality is based on Theorem  \ref{thm Vust0}. Owing to Lemma
\ref{flu3} and its corollary, any element of
	  $\operatorname{End}_{\mathbb C_n[x_1,\dots,x_d] \rrtimes \bbc
{\fraksd}}(V^{\otimes d})$ is a linear combination of
	the elements $\Theta_{\bfi,\bfj}$ with
$(\bfi,\bfj)\in J^d$ in (\ref{equ: xi bfi bfj}).
	These elements belong to the image of $\phi_d$
	by Lemma \ref{flu1}. Thus, the first equality
follows.
\end{proof}

\section{$U(\ggg_e)$ as a contraction of finite $W$-superalgebra for any nilpotent
$e$}\label{sec 5}
In this section, we assume that $\ggg$ is any basic classical  Lie superalgebra over
$\bbc$ as defined in \cite{Kac}, \cite{CW},  \cite{ZS4} or \cite{ZS1}, and $e$ is any
nilpotent element in $\ggg_\bz$.     In this section, we will prove Theorem \ref{cdd1}. A  consequence of this result is important to the final proof of Theorem \ref{thm:dcthm}.  This result shows that under the Dynkin grading  which is  denoted by $\grsf$, there is an isomorphism between $\grsf W_\chi$ and $U(\ggg_e)$ for the finite $W$-superalgebra associated with $\chi=(e,-)\in\ggg^*_\bz$ when the ``judging number" $\sfr$ (see \S\ref{sec finite W}) is even. This result is a super version of \cite[Theorem 3.8]{BGK} and \cite[Proposition 2.1]{P2}.

Recall that in \cite{P3} Premet introduced and developed the theory of finite  $W$-algebra
$U(\ggg,e)$ (in the present paper,  we instead use the notation $W_{\chi}$ for
$\chi=(e,-)$, see \S\ref{sec finite W}). Partial counterpart theory
in the super case has been established in \cite{Zeng}, \cite{ZS}, \cite{ZS1} and \cite{ZS5}.

\subsection{Kazhdan filtration}\label{kazhdan}
First recall some results in \cite{ZS}. For any given nilpotent element
$e\in\ggg_\bz$, let $\{e,f,h\}$ be an $\mathfrak{sl}_2$-triple in $\ggg$, then under
the action of ad\,$h$ we have $\ggg=\bigoplus_{i\in\frac{\mathbb{Z}}{2}}{\ggg}(i)$
{with ${\ggg}(i)=\{x\in{\ggg}\mid[h,x]=2ix\}$, which is called a Dynkin grading.} Moreover, the definition of Dynkin grading on $\ggg$ can be generalized to the ones on $U(\ggg)$ and its subalgebras under the action of ad\,$h$.  It follows from
\cite[Proposition 2.1]{ZS} that there exists an even non-degenerate
supersymmetric invariant bilinear form $(-,-)$ such that $(e,f)=1$. For
any $x\in\ggg$, define $\chi(x)=(e,x)$.
As the even part $\ggg_{\bz}$ of $\ggg$ is a reductive Lie algebra, it is easily known
 that $(\mathfrak{g}_{h})_{\bz}\cap(\ggg_{e})_{\bz}$ is a Levi  subalgebra
of $(\ggg_{e})_{\bz}$. Pick a maximal toral subalgebra $\mathfrak{t}_{e}\subseteq
\ggg(0)_{\bz}$ of this Levi  subalgebra, and a Cartan subalgebra  $\mathfrak{t}$
of $\ggg$ containing $\mathfrak{t}_{e}$ and $h$.

There exists a symplectic (resp. symmetric) bilinear form
$\langle-,-\rangle'$ on the ${\bbz}_2$-graded subspace ${\ggg}(-\frac{1}{2})_{\bz}$
(resp. ${\ggg}(-\frac{1}{2})_{\bo}$) given by $\langle x,y\rangle':=(e,[x,y])=\chi([x,y])$
for all $x,y\in{\ggg}(-\frac{1}{2})_{\bz}~(\text{resp.}\,x,y\in{\ggg}(-\frac{1}{2})_{\bo})$.
There exist bases $\{u_1,\ldots,u_{2s}\}$ of ${\ggg}(-\frac{1}{2})_{\bz}$ and
$\{v_1,\ldots,v_\sfr\}$ of ${\ggg}(-\frac{1}{2})_{\bo}$  such that $\langle u_i, u_j\rangle'
=i^*\delta_{i+j,2s+1}$ for $1\leq i,j\leq 2s$, where
$i^*=\left\{\begin{array}{ll}-1&\text{if}~1\leq i\leq s;\\
1&\text{if}~s+1\leq i\leq 2s\end{array}\right.$, and $\langle
v_i,v_j\rangle'=\delta_{i+j,\sfr+1}$ for $1\leq i,j\leq\sfr$.

Set $\mathfrak{m}:=\bigoplus_{i\leq -1}{\ggg}(i)\oplus{\ggg}(-\frac{1}{2})^{\prime}$
with
${\ggg}(-\frac{1}{2})^{\prime}={\ggg}(-\frac{1}{2})^{\prime}_{\bz}\oplus{\ggg}(-\frac{1}{2})^{\prime}_{\bo}$,
where ${\ggg}(-\frac{1}{2})^{\prime}_{\bz}$ is the ${\bbc}$-span of $u_{s+1},\ldots,u_{2s}$
and
${\ggg}(-\frac{1}{2})^{\prime}_{\bo}$ is the ${\bbc}$-span of
$v_{\frac{\sfr}{2}+1},\ldots,v_\sfr$ (resp. $v_{\frac{\sfr+3}{2}},\ldots,v_\sfr$)
when $\sfr:=\text{dim}\,{\ggg}(-\frac{1}{2})_{\bo}$ is even (resp. odd), then $\chi$
vanishes on the derived subalgebra of $\mathfrak{m}$. Define
\begin{equation}\label{defp}
  \mathfrak{p}:=\bigoplus_{i\geq
  0}{\ggg}(i),\quad\mathfrak{m}^{\prime}:=
  \left\{\begin{array}{ll}\mathfrak{m}&\text{if}~\sfr~\text{is
  even;}\\
\mathfrak{m}\oplus {\bbc}v_{\frac{\sfr+1}{2}}&\text{if}~\sfr~\text{is
odd.}\end{array}\right.\end{equation}

As in \cite[\S2]{ZS1} we can choose a basis
$x_1,\ldots,x_l,x_{l+1},\ldots,x_{m'}\in\mathfrak{p}
_{\bz}, y_1,\ldots, y_q, y_{q+1}, \ldots,y_{n'}\in{\mathfrak{p}}_{\bo}$ of the free
$\mathbb{C}$-module ${\mathfrak{p}}$ such that

(a) $x_i\in{\ggg}(k_i)_{\bz}, y_j\in{\ggg}(k'_j)_{\bo}$, where
$k_i,k'_j\in\frac{{\bbz}_+}{2}$ with $1\leq i\leq m'$ and $1\leq j\leq
n'$;

(b) $x_1,\ldots,x_l$ is a basis of $({\ggg}_e)_{\bz}$ and $y_1,\ldots,y_q$ is a
basis of $({\ggg}_e)_{\bo}$;

(c) $x_{l+1},\ldots,x_{m'}\in[f,{\ggg}_{\bz}]$ and $
y_{q+1},\ldots,y_{n'}\in[f,{\ggg}_{\bo}]$.

Recall that a generalized Gelfand-Graev ${\ggg}$-module associated with $\chi$ is defined by
\begin{align}\label{eq: Q chi}
Q_\chi:=U({\ggg})\otimes_{U(\mathfrak{m})}{\bbc}_\chi,
\end{align}
 where ${\bbc}_\chi={\bbc}1_\chi$ is a one-dimensional  $\mathfrak{m}$-module
 such that $x.1_\chi=\chi(x)1_\chi$ for all $x\in\mathfrak{m}$. For
 $k\in\mathbb{Z}_+$, define
 \begin{equation*}
 \begin{array}{llllll}
 \mathbb{Z}_+^k&:=&\{(i_1,\ldots,i_k)\mid i_j\in\mathbb{Z}_+\},&
 \mathbb{Z}_2^k:=&\{(i_1,\ldots,i_k)\mid i_j\in\{0,1\}\}
 \end{array}
 \end{equation*}with $1\leq j\leq k$. For $\mathbf{i}=(i_1,\ldots,i_k)$
 in $\mathbb{Z}_+^k$ or $\mathbb{Z}_2^k$, set $|\mathbf{i}|=i_1+\cdots+i_k$. For any
 real number $a\in\mathbb{R}$, let $\lceil a\rceil$ denote the largest integer
 lower bound of $a$, and $\lfloor a\rfloor$ the least integer upper bound of $a$.
 Given
 $(\mathbf{a},\mathbf{b},\mathbf{c},\mathbf{d})\in{\bbz}^{m'}_+\times\mathbb{Z}_2^{n'}\times{\bbz}^s_+\times\mathbb{Z}_2^t$
 (where
 $t=\lfloor\frac{\sfr}{2}\rfloor=\lfloor\frac{\text{dim}\,{\ggg}(-\frac{1}{2})_{\bo}}{2}\rfloor$),
 let $x^\mathbf{a}y^\mathbf{b}u^\mathbf{c}v^\mathbf{d}$ denote the monomial
 $x_1^{a_1}\cdots x_{m'}^{a_{m'}}y_1^{b_1}\cdots y_{n'}^{b_{n'}}u_1^{c_1}\cdots
 u_s^{c_s}v_1^{d_1}\cdots v_t^{d_t}$ in $U({\ggg})$, 
 {denote by $\text{wt}(x^{\mathbf{a}}y^\mathbf{b}u^\mathbf{c}v^\mathbf{d})=2\Big(\sum\limits_{i=1}^{m'}k_ia_i\Big)+2\Big(\sum\limits_{i=1}^{n'}k'_ib_i\Big)-|\mathbf{c}|-|\mathbf{d}|$\, the weight of $x^{\mathbf{a}}y^\mathbf{b}u^\mathbf{c}v^\mathbf{d}$, and
 set
\begin{equation}\label{e-degree}
|(\mathbf{a},\mathbf{b},\mathbf{c},\mathbf{d})|_e:=2\sum_{i=1}^{m'}a_i(k_i+1)+2\sum_{i=1}^{n'}b_i(k'_i+1)+\sum_{i=1}^sc_i+\sum_{i=1}^td_i.
\end{equation}}
\begin{definition}\label{W-C}
Define the finite $W$-superalgebra over $\mathbb{C}$ by
$$W_\chi:=(\text{End}_{\ggg}Q_{\chi})^{\operatorname{\op}}.$$
\end{definition}

Let $N_\chi$ denote the $\mathbb{Z}_2$-graded ideal of codimension one in
$U({\mmm})$ generated by all $x-\chi(x)$ with $x\in{\mmm}$. Set
\begin{equation}\label{Ichi}
I_\chi:=U({\ggg})N_\chi.
\end{equation}
Then $Q_\chi\cong U({\ggg})/I_\chi$ as ${\ggg}$-modules. The fixed point space
$(U({\ggg})/I_\chi)^{\text{ad}\,{\mmm}}$ carries with  a natural algebra structure
given by $$(x+I_\chi)\cdot(y+I_\chi):=(xy+I_\chi)$$ for all $x,y\in U({\ggg})$ such that $[a,x],[a,y]\in I_\chi$, for all $a\in\mmm$.
By \cite[Theorem 2.12]{ZS}, we have
$(\text{End}_{\ggg}Q_{\chi})^{\operatorname{\op}}\cong Q_{\chi}^{\text{ad}\,{\mmm}}$
as $\mathbb{C}$-algebras.

Let $w_1,\ldots, w_c$ be a basis of $\ggg$ over $\bbc$. Let $U({\ggg})=\bigcup_{i\in{\frac{\bbz}{2}}}U^i({\ggg})$ be a filtration of $U({\ggg})$, where $U^i({\ggg})$ is the ${\bbc}$-span of all $w_{i_1}\cdots w_{i_k}$ with $w_{i_1}\in{\ggg}(j_1),\ldots,w_{i_k}\in{\ggg}(j_k)$ and $(j_1+1)+\cdots+(j_k+1)\leqslant  i$. This filtration is called {\sl Kazhdan filtration}.  The Kazhdan filtration on $Q_{\chi}$ is defined by $Q^i_{\chi}:=\pi(U^i({\ggg}))$ with $\pi:U({\ggg})\twoheadrightarrow U({\ggg})/I_\chi$ being the canonical homomorphism, which makes $Q_{\chi}$ into a filtered $U({\ggg})$-module. By virtue of \eqref{e-degree}, we have that $Q^i_{\chi}$ is the linear combination of all
$x^\mathbf{a}y^\mathbf{b}u^\mathbf{c}v^\mathbf{d}\otimes1_\chi$ with $|(\mathbf{a},\mathbf{b},\mathbf{c},\mathbf{d})|_e\leq 2i$.
Then there is an induced Kazhdan filtration
$W^i_{\chi}$ on the subspace $W_{\chi}=Q_{\chi}^{\text{ad}\,\mmm}$ of $Q_{\chi}$ such that $W^j_{\chi}=0$
unless $j\geqslant0$.
%
%
We see that  $W_{\chi}^{i}\cdot W_{\chi}^{j} \subseteq W_{\chi}^{i+j} $ for any  $i,
j\in\frac{{\bbz}_+}{2}$.
We will write $\grsf'W_{\chi}$ for the graded algebra of $W_{\chi}$ under the Kazhdan grading.

\subsection{}\label{4.2}
Recall that $\{x_1,\ldots,x_l\}$ and $\{y_1,\ldots,y_q\}$ are bases of
${\ggg}^e_\bz$ and ${\ggg}^e_{\bo}$, respectively. Set
\[Y_i:=\left\{
\begin{array}{ll}
x_i&\text{if}~1\leq  i\leq  l;\\
y_{i-l}&\text{if}~l+1\leq  i\leq  l+q;\\
v_{\frac{\textsf{r}+1}{2}}&\text{if}~i=l+q+1 \mbox{ whenever }\textsf{r}\mbox{ is
odd}.
\end{array}
\right.
\]
By assumption it is easily seen that $Y_i\in{\ggg}_e$ for $1\leq  i\leq
l+q$. The term $Y_{l+q+1}\notin{\ggg}_e$ occurs only when
$\textsf{r}=\dim\ggg(-\frac{1}{2})_{\bo}$ is odd. In the subsequent arguments, we
set
$$q':=\begin{cases} q &\mbox{ if }\textsf{r} \mbox{ is even; }\cr
q+1 & \mbox{ if }\textsf{r}\mbox{ is odd}.\end{cases}$$ Assume that the above $Y_i$ belongs to
${\ggg}(m_i)$ for $1\leq  i\leq  l+q'$.
Let $\tilde{\Theta}_i$ denote the image of $\Theta_i\in W_{\chi}$ in
the graded algebra $\grsf'W_{\chi}$ under the Kazhdan grading.

\begin{theorem}(\cite[Theorem 4.5]{ZS}, \cite[Theorem 1.4, Proposition 1.10]{ZS5})\label{pbw}
The following PBW structural statements for the finite $W$-superalgebra
$W_{\chi}$ hold, corresponding to the cases when  $\textsf{r}$  is even and when
$\textsf{r}$ is odd, respectively.
	\begin{itemize}
		\item[(1)] There exist homogeneous elements
$\Theta_1,\ldots,\Theta_{l}\in (W_{\chi})_{\bz}$ and
$\Theta_{l+1},\ldots,\Theta_{l+q'}\in (W_{\chi})_{\bo}$ such that
		\begin{equation}\label{gen}
		\begin{array}{ll}
		&\Theta_k(1_\chi)\\=&(Y_k+\sum\limits_{\mbox{\tiny
$\begin{array}{c}|\mathbf{a},\mathbf{b},\mathbf{c},\mathbf{d}|_e=m_k+1,\\|\mathbf{a}|
				+|\mathbf{b}|+|\mathbf{c}|+|\mathbf{d}|\geq
2\end{array}$}}\lambda^k_{\mathbf{a},\mathbf{b},\mathbf{c},\mathbf{d}}x^{\mathbf{a}}
		
y^{\mathbf{b}}u^{\mathbf{c}}v^{\mathbf{d}}\\+&\sum\limits_{|\mathbf{a},\mathbf{b},\mathbf{c},\mathbf{d}|_e<m_k+1}\lambda^k_{\mathbf{a},\mathbf{b},\mathbf{c},\mathbf{d}}x^{\mathbf{a}}
		y^{\mathbf{b}}u^{\mathbf{c}}v^{\mathbf{d}})\otimes1_\chi
		\end{array}
		\end{equation}
		for $1\leq  k\leq  l+q$, where
$\lambda^k_{\mathbf{a},\mathbf{b},\mathbf{c},\mathbf{d}}\in\mathbb{Q}$, and
$\lambda^k_{\mathbf{a},\mathbf{b},\mathbf{c},\mathbf{d}}=0$ if
$a_{l+1}=\cdots=a_{m'}=b_{q+1}=\cdots=b_{n'}=c_1=\cdots=c_s=
		d_1=\cdots=d_{\lceil\frac{\textsf{r}}{2}\rceil}=0$.
		
		Additionally  set
$\Theta_{l+q+1}(1_\chi)=v_{\frac{\textsf{r}+1}{2}}\otimes1_\chi$  when
$\textsf{r}$ is odd.

		\item[(2)] The monomials
$\Theta_1^{a_1}\cdots\Theta_l^{a_l}\Theta_{l+1}^{b_1}\cdots\Theta_{l+q'}^{b_{q'}}$
with $a_i\in\mathbb{Z}_+, b_j\in\mathbb{Z}_2$ for $1\leq i\leq l$ and
$1\leq j\leq q'$ form a basis of $W_{\chi}$ over $\mathbb{C}$.
		
		\item[(3)] For $1\leq  i\leq  l+q'$,  the elements
$\tilde{\Theta}_i=\Theta_i+W_{\chi}^{m_i+\frac{1}{2}}\in\grsf'W_{\chi}$ are
algebraically independent and generate $\grsf'W_{\chi}$. In particular,
$\grsf'W_{\chi}$ is a graded polynomial superalgebra with homogeneous
generators of degrees $m_1+1,\ldots,m_{l+q'}+1$.
		
		\item[(4)] For  $1\leq  i,j\leq  l+q'$, we have
		$$[\Theta_i,\Theta_j]\in W_\chi^{m_i+m_j+1}.$$
		Moreover, if the elements $Y_i, Y_j\in {\ggg}_e$ for $1\leq
i,j\leq  l+q$ satisfy $[Y_i,Y_j]=\sum\limits_{k=1}^{l+q}\alpha_{ij}^kY_k$
in ${\ggg}_e$, then
		\begin{equation}\label{Thetaa2}
		\begin{array}{llllll}
		&[\Theta_i,\Theta_j]&\equiv&\sum\limits_{k=1}^{l+q}
\alpha_{ij}^k\Theta_k+q_{ij}(\Theta_1,\ldots,\Theta_{l+q'})&(\text{mod }
W_\chi^{m_i+m_j+\frac{1}{2}}),
		\end{array}
		\end{equation}where $q_{ij}$ is a super-polynomial in $l+q'$ variables
in $\mathbb{Q}$ whose constant term and linear part are zero.
		
		\item[(5)] When $\textsf{r}$ is odd, for $1\leq i,j\leq l+q$  we have 
$$[\Theta_i,\Theta_{l+q+1}]=[\Theta_{l+q+1},\Theta_j]=0.$$		
%
		Moreover, if $i=j=l+q+1$, then \begin{equation}\label{idid}
		[\Theta_{l+q+1},\Theta_{l+q+1}]=\sf{id}.
		\end{equation}
	\end{itemize}
\end{theorem}

By virtue of Theorem \ref{pbw}, we see that the Kazhdan degree of the monomial
$\Theta_1^{a_1}\cdots\Theta_l^{a_l}\Theta_{l+1}^{b_1}\cdots\Theta_{l+q'}^{b_{q'}}$ for the PBW basis of $W_\chi$
is $\sum_{i=1}^{l}a_{i}(m_{i}+1)+\sum_{i=1}^{q'}b_{i}(m_{l+i}+1)$. Then it follows from the discussion as in \S\ref{kazhdan} and \eqref{gen} that
\begin{equation}\label{degeTheta}
\text{deg}_e(\Theta_1^{a_1}\cdots\Theta_l^{a_l}\Theta_{l+1}^{b_1}\cdots\Theta_{l+q'}^{b_{q'}})=2\bigg(\sum_{i=1}^{l}a_{i}(m_{i}+1)+\sum_{i=1}^{q'}b_{i}(m_{l+i}+1)\bigg).
\end{equation} Moreover, the corresponding graded algebra
 $\grsf'W_{\chi} = \sum_{k\geq -\frac{1}{2}}W_{\chi}^{k+\frac{1}{2}}/W_{\chi}^{k}$ (we set $W_{\chi}^{-\frac{1}{2}}=\{0\}$ here) is a commutative superalgebra, i.e.,
\begin{theorem}(\cite[Theorem 0.1]{ZS}, \cite[Theorem 1.7]{ZS5}, \cite[Corollary 3.9]{SX})\label{ZW pbw}
For any basic classical Lie superalgebra  $\ggg$, we have
\begin{itemize}
\item[(1)] $\grsf'W_{\chi} \cong S(\ggg_{e})$ when  $\textsf{r}$ is even;
\item[(2)] $\grsf'W_{\chi} \cong S(\ggg_{e})\otimes \mathbb{C}[\Theta]$ when
    $\textsf{r}$ is odd,
 \end{itemize}
where  $S(\ggg_{e})$ denotes the supersymmetric algebra on  $\ggg_{e}$,  and
$\mathbb{C}[\Theta]$  is an external algebra generated by an element  $\Theta$.
\end{theorem}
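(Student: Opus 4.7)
The plan is to derive Theorem \ref{ZW pbw} directly from the PBW structure theorem (Theorem \ref{pbw}) by tracking what the graded symbols of the distinguished generators $\Theta_1,\ldots,\Theta_{l+q'}$ satisfy in $\grsf(W_\chi)$, and then exhibiting an explicit isomorphism onto either $S(\ggg_e)$ or $S(\ggg_e)\otimes\mathbb{C}[\Theta]$.

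First I would establish super-commutativity of $\grsf(W_\chi)$. By Theorem \ref{pbw}(4), whenever $1\le i,j\le l+q'$ we have $[\Theta_i,\Theta_j]\in W_\chi^{m_i+m_j+2}$, whereas the product $\Theta_i\Theta_j$ has Kazhdan degree $(m_i+2)+(m_j+2)=m_i+m_j+4$. Thus in passing to $\grsf(W_\chi)$ the super-commutator $[\tilde\Theta_i,\tilde\Theta_j]$ lies in a strictly lower graded component and therefore vanishes, so $\tilde\Theta_i\tilde\Theta_j=(-1)^{|\Theta_i||\Theta_j|}\tilde\Theta_j\tilde\Theta_i$. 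In the odd case ($\sfr$ odd) I additionally need the relation $\tilde\Theta_{l+q+1}^{\,2}=0$: this comes from \eqref{idid}, namely $[\Theta_{l+q+1},\Theta_{l+q+1}]=\sf{id}$, which in $U(\ggg)$ reads $2\Theta_{l+q+1}^{\,2}=\sf{id}$; since $\Theta_{l+q+1}$ has Kazhdan degree $m_{l+q+1}+2=1$ while $\sf{id}$ has Kazhdan degree $0<2$, the principal symbol of $2\Theta_{l+q+1}^{\,2}$ is zero in $\grsf^{\,2}(W_\chi)$, giving $\tilde\Theta_{l+q+1}^{\,2}=0$.

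Next I would construct the desired algebra homomorphism. In case (1), recall that $\{Y_1,\ldots,Y_{l+q}\}=\{x_1,\ldots,x_l,y_1,\ldots,y_q\}$ is a $\mathbb Z_2$-homogeneous basis of $\ggg_e$, so there is a unique algebra map $\varphi:S(\ggg_e)\to\grsf(W_\chi)$ sending $Y_i\mapsto\tilde\Theta_i$ (well-defined by supercommutativity of the target). In case (2), I would extend this by sending the external generator $\Theta\in\mathbb{C}[\Theta]$ to $\tilde\Theta_{l+q+1}$; the relation $\Theta^{2}=0$ in $\mathbb{C}[\Theta]$ matches $\tilde\Theta_{l+q+1}^{\,2}=0$ just established, so $\varphi:S(\ggg_e)\otimes\mathbb{C}[\Theta]\to\grsf(W_\chi)$ is a well-defined homomorphism of superalgebras.

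Finally, surjectivity of $\varphi$ follows immediately from Theorem \ref{pbw}(3), which asserts that $\tilde\Theta_1,\ldots,\tilde\Theta_{l+q'}$ generate $\grsf(W_\chi)$. For injectivity, the PBW basis in Theorem \ref{pbw}(2) says that the ordered monomials $\Theta_1^{a_1}\cdots\Theta_l^{a_l}\Theta_{l+1}^{b_1}\cdots\Theta_{l+q'}^{b_{q'}}$ form a $\mathbb{C}$-basis of $W_\chi$; filtering by the $\mathrm{deg}_e$ function and passing to the associated graded, the corresponding symbols form a basis of $\grsf(W_\chi)$, and these are precisely the images under $\varphi$ of the standard monomial bases of $S(\ggg_e)$ or $S(\ggg_e)\otimes\mathbb{C}[\Theta]$. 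Hence $\varphi$ is a bijection and the theorem follows. The only real subtlety is the odd case: one has to carefully verify that the Kazhdan degree of $\sf{id}$ is strictly smaller than that of $\Theta_{l+q+1}^{\,2}$ so that the principal symbol computation producing $\tilde\Theta_{l+q+1}^{\,2}=0$ is legitimate, which is where the precise normalizations in the definition of the Kazhdan filtration play their crucial role.
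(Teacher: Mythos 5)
The paper does not prove Theorem \ref{ZW pbw}; it cites it from \cite[Theorem 4.5]{ZS}, and in fact part (3) of the quoted Theorem \ref{pbw} already asserts in passing that $\grsf(W_\chi)$ is a graded polynomial superalgebra on the symbols $\tilde\Theta_1,\ldots,\tilde\Theta_{l+q'}$, so Theorem \ref{ZW pbw} is essentially a restatement of that. Your proposal fills in the derivation explicitly and correctly: super-commutativity of $\grsf(W_\chi)$ follows from Theorem \ref{pbw}(4) exactly as you say (the bracket drops by two Kazhdan degrees relative to the product, so its symbol in degree $m_i+m_j+4$ vanishes), and in the odd case the relation $[\Theta_{l+q+1},\Theta_{l+q+1}]=\mathsf{id}$, i.e. $2\Theta_{l+q+1}^2=\mathsf{id}$, shows that $\Theta_{l+q+1}^2$ lives in Kazhdan degree $0$ rather than $2$, forcing $\tilde\Theta_{l+q+1}^2=0$. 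The map $\varphi$ is a homomorphism of \emph{super}algebras because Theorem \ref{pbw}(1) guarantees that $\Theta_1,\ldots,\Theta_l$ are even and $\Theta_{l+1},\ldots,\Theta_{l+q'}$ are odd, matching the parities of $Y_1,\ldots,Y_{l+q}$ (plus one extra odd generator when $\sfr$ is odd); you use this implicitly but it is worth flagging. Surjectivity and injectivity then follow from parts (3) and (2) respectively, as you argue — though note that the algebraic independence statement in (3) already gives injectivity directly, so the appeal to (2) is a redundant but harmless alternative. In short, your argument is correct and is the natural derivation of the statement from the PBW theorem.
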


\begin{remark}\label{pbw1}
(1) It is notable that for the case with $\ggg=\mathfrak{gl}(m|n)$ and a nilpotent element $e\in\ggg_{\bz}$, we can always find an appropriate element $h\in\ggg_{\bz}$ for the $\mathfrak{sl}_2$-triple $\{e,f,h\}$ such that
$\textsf{r}$ is an even number under the Dynkin grading
(see \cite[\S3.2]{WZ} for more
 details).

(2) In the case $\ggg=\gl(m|n)$ with $e\in \ggg_\bz$ being principal nilpotent, the  first statement of Theorem \ref{ZW pbw} was formulated in \cite[Theorem 4.1]{BBG}.
\end{remark}

 The following observation is basic (see for example, \cite[Appendix I, Lemma I.1]{Zeng}). 
\begin{lemma}\label{3.3.2}
Each generator $\Theta_k$ of the finite $W$-superalgebra  $W_{\chi}$ can be
chosen to be a weight vector for $\mathfrak{t}_{e}$ of the same weight as $Y_k$.
\end{lemma}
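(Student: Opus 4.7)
The plan is to exploit the natural adjoint action of $\mathfrak{t}_e$ on $W_\chi$ and project each $\Theta_k$ onto its weight-$\lambda_k$ component, where $\lambda_k$ denotes the weight of $Y_k$ under $\mathfrak{t}_e$.

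First I would verify that $\mathfrak{t}_e$ acts on $W_\chi$ and preserves the Kazhdan filtration. Since $\mathfrak{t}_e \subseteq \mathfrak{g}(0)_{\bar 0} \cap (\mathfrak{g}_e)_{\bar 0}$, $\operatorname{ad}(\mathfrak{t}_e)$ commutes with both $\operatorname{ad}(h)$ and $\operatorname{ad}(e)$. The first commutation shows that $\mathfrak{t}_e$ respects the $\mathbb{Z}$-grading $\mathfrak{g}=\bigoplus_i \mathfrak{g}(i)$; the second, together with the invariance of $(\cdot,\cdot)$, shows that $\chi$ is $\mathfrak{t}_e$-fixed, because $\chi([t,x]) = (e,[t,x]) = ([e,t],x) = 0$ for all $t \in \mathfrak{t}_e$ and $x \in \mathfrak{g}$. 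Since $\mathfrak{t}_e$ acts diagonalizably on $\mathfrak{g}(-1)_{\bar 0}$ and $\mathfrak{g}(-1)_{\bar 1}$ and preserves the forms $\langle\cdot,\cdot\rangle'$, I can choose the Lagrangian-type bases $\{u_i\}$, $\{v_j\}$ to consist of $\mathfrak{t}_e$-weight vectors with weights paired in opposite signs, so that the Lagrangian $\mathfrak{g}(-1)'$ is $\mathfrak{t}_e$-stable. Consequently $\mathfrak{m}$ and $\mathfrak{m}'$ are $\mathfrak{t}_e$-stable, the ideal $I_\chi$ is $\mathfrak{t}_e$-stable, and $\mathfrak{t}_e$ acts by the adjoint action on $Q_\chi$; since $\mathfrak{t}_e$ normalizes $\mathfrak{m}$, this action restricts to $W_\chi = Q_\chi^{\operatorname{ad}\mathfrak{m}}$ and is clearly locally finite and semisimple.

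Next, after choosing the bases $\{x_i\}$, $\{y_j\}$ of $\mathfrak{p}$ so that $x_1,\ldots,x_l,y_1,\ldots,y_q$ are $\mathfrak{t}_e$-weight vectors of weights $\lambda_1,\ldots,\lambda_{l+q}$, I would decompose each generator $\Theta_k$ from Theorem \ref{pbw} according to the $\mathfrak{t}_e$-weight spaces of $W_\chi$: $\Theta_k = \sum_\mu \Theta_k^{(\mu)}$. Because $\mathfrak{t}_e$ preserves the Kazhdan filtration on $W_\chi$, each component $\Theta_k^{(\mu)}$ lies in $W_\chi^{m_k + 2}$. Reading the leading term formula (\ref{gen}) in the associated graded $\operatorname{gr}\,W_\chi$, the leading symbol of $\Theta_k$ is $Y_k$, a pure weight-$\lambda_k$ vector. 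Hence the leading symbol of $\Theta_k^{(\lambda_k)}$ is again $Y_k$, while all other components $\Theta_k^{(\mu)}$ with $\mu \ne \lambda_k$ must have strictly smaller Kazhdan degree. Replacing $\Theta_k$ by $\Theta_k^{(\lambda_k)}$ therefore yields $\mathfrak{t}_e$-weight vectors satisfying the leading-term condition (\ref{gen}), and by Theorem \ref{pbw}(2)-(3) the corresponding monomials still form a PBW basis and the structural relations (\ref{Thetaa2})--(\ref{idid}) continue to hold, since all those statements depend only on the principal symbols in $\operatorname{gr}\,W_\chi$.

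For $k = l+q+1$, which occurs only when $\textsf{r}$ is odd, one has $\Theta_{l+q+1}(1_\chi) = v_{(\textsf{r}+1)/2} \otimes 1_\chi$, so the assertion reduces to verifying that $v_{(\textsf{r}+1)/2}$ itself can be chosen as a $\mathfrak{t}_e$-weight vector; this is ensured by the $\mathfrak{t}_e$-compatible choice of basis of $\mathfrak{g}(-1)_{\bar 1}$ already fixed above. The main technical point — and the only real obstacle — is the consistency between the weight decomposition and the Kazhdan filtration, namely that the weight-$\lambda_k$ projection does not decrease the Kazhdan degree. This is handled automatically by the observation that the principal symbol $Y_k$ is itself a weight vector of weight $\lambda_k$, so projecting onto that weight cannot lose the top term.
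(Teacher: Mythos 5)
The paper offers no proof of this lemma, citing \cite[Appendix I, Lemma I.1]{Zeng} directly; your proposal is correct and is the expected argument, modeled on Premet's non-super analogue in \cite[Proposition 2.1]{P2}. The key points you handle correctly are that $\chi$ vanishes on $[\mathfrak{t}_e,\ggg]$ (so the $\ad\,\mathfrak{t}_e$-action descends to $Q_\chi$), that the Lagrangian-type complements in $\ggg(-1)$ and all the relevant bases can be chosen $\mathfrak{t}_e$-homogeneous so $\ad\,\mathfrak{t}_e$ acts semisimply on $W_\chi$ preserving the Kazhdan filtration, and that projecting $\Theta_k$ onto its weight-$\lambda_k$ component preserves the principal symbol $Y_k$ (by $\mathfrak{t}_e$-equivariance of the symbol map) while discarding only terms of strictly lower Kazhdan degree, so the PBW generating property and the structural relations of Theorem \ref{pbw} are retained.
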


Consider the linear mapping
\begin{equation}
\begin{split}
\Theta:\quad&\ggg_e\rightarrow W_{\chi}\cr
&x\mapsto \Theta_{x}
\end{split}
\end{equation}
such that $\Theta_{Y_i}:=\Theta_i$ for all $i$. Thanks to Lemma \ref{3.3.2},
$\Theta$ is an injective homomorphism of $\mathfrak{t}_{e}$-module. Although
$\Theta$ is not a Lie superalgebra homomorphism, in general, we have
\begin{lemma}\label{4.6}For $Y_i, Y_j\in {\ggg}_e$ with $1\leq
i,j\leq  l+q$, we have
\begin{equation}\label{comm}
\begin{array}{lllll}
[\Theta_{i},\Theta_{j}]&\equiv&
	\Theta_{[Y_{i}, Y_{j}]}+q_{ij}(\Theta_1,\ldots,\Theta_{l+q'})&(\text{mod }
W_\chi^{m_i+m_j}),
	\end{array}
	\end{equation}    where $q_{ij}$ is a polynomial superalgebra in $l+q'$ variables with
initial form of total degree $\geq2$.
\end{lemma}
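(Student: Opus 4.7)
The plan is to sharpen Theorem \ref{pbw}(4) by one level of the Kazhdan filtration, using an $\ad h$-weight parity argument on the associated graded. Since $\ggg_e$ is $\ad h$-graded with $Y_k \in (\ggg_e)(m_k)$, the bracket $[Y_i, Y_j]$ lies in $(\ggg_e)(m_i + m_j)$, and one may write $[Y_i, Y_j] = \sum_k \alpha_{ij}^k Y_k$ with $\alpha_{ij}^k = 0$ unless $m_k = m_i + m_j$. By Lemma \ref{3.3.2} the map $\Theta$ is $\bbc$-linear, so $\Theta_{[Y_i, Y_j]} = \sum_k \alpha_{ij}^k \Theta_k$. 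Theorem \ref{pbw}(4) then yields
\[
[\Theta_i, \Theta_j] \equiv \Theta_{[Y_i, Y_j]} + q_{ij}^{(0)}(\Theta_1, \ldots, \Theta_{l+q'}) \pmod{W_\chi^{m_i + m_j + 1}}
\]
for some super-polynomial $q_{ij}^{(0)}$ with initial total degree $\geq 2$, and one needs only to promote the modulus by one Kazhdan level.

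The principal tool will be an $\ad h$-grading on $W_\chi$. After a small inner-derivation twist reflecting the fact that $\chi$ shifts by $-2$ under $\ad h$, the derivation $\ad h$ descends to $W_\chi$ with integer eigenvalues, and the induced grading is compatible with the identification $\grsf(W_\chi) \cong S(\ggg_e)$ (or $S(\ggg_e) \otimes \bbc[\Theta]$ when $\sfr$ is odd) via $\tilde\Theta_k \leftrightarrow Y_k$. The elementary identity
\[
|(\mathbf{a}, \mathbf{b}, \mathbf{c}, \mathbf{d})|_e - \operatorname{wt}(x^{\mathbf{a}} y^{\mathbf{b}} u^{\mathbf{c}} v^{\mathbf{d}}) = 2(|\mathbf{a}| + |\mathbf{b}| + |\mathbf{c}| + |\mathbf{d}|),
\]
extracted from \eqref{e-degree}, shows that each $\Theta_k$ ($1 \leq k \leq l+q$) can be replaced by its $\ad h$-weight-$m_k$ projection without altering its leading Kazhdan symbol $Y_k$. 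After this normalization, every PBW monomial $\Theta^\alpha = \Theta_{k_1} \cdots \Theta_{k_s}$ becomes $\ad h$-homogeneous of weight $\sum_l m_{k_l}$ and Kazhdan degree $\sum_l m_{k_l} + 2s$, so every $\ad h$-weight appearing in $\grsf_d(W_\chi)$ shares the parity of $d$.

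Both $[\Theta_i, \Theta_j]$ and $\Theta_{[Y_i, Y_j]}$ are now $\ad h$-homogeneous of weight $m_i + m_j$. Projecting the initial congruence onto the weight-$(m_i + m_j)$ subspace replaces $q_{ij}^{(0)}$ by its weight-$(m_i + m_j)$ component $q_{ij}$, which is still a super-polynomial with initial total degree $\geq 2$. The residual $R := [\Theta_i, \Theta_j] - \Theta_{[Y_i, Y_j]} - q_{ij}(\Theta_1,\ldots,\Theta_{l+q'})$ then lies in $W_\chi^{m_i + m_j + 1} \cap (W_\chi)_{m_i + m_j}$, and the parity observation precludes any $\ad h$-weight-$(m_i + m_j)$ contribution inside $\grsf_{m_i + m_j + 1}(W_\chi)$; hence the image of $R$ in that graded piece is zero and $R \in W_\chi^{m_i + m_j}$, as required.

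The main obstacle is the construction of the $\ad h$-grading on $W_\chi$ itself: one must verify carefully that a suitable twist of $\ad h$ descends to an integer-eigenvalued derivation of $W_\chi$, and that the $\Theta_k$'s can simultaneously be chosen as eigenvectors without compromising the conclusions of Theorem \ref{pbw}. The odd-$\sfr$ case is particularly delicate, for the extra generator $\Theta_{l+q+1}$, whose top symbol $v_{(\sfr+1)/2}$ has weight $-1$ but which satisfies $[\Theta_{l+q+1}, \Theta_{l+q+1}] = \mathsf{id}$, cannot be $\ad h$-homogeneous; the argument must then be restricted to the subalgebra generated by $\Theta_1, \ldots, \Theta_{l+q}$---equivalently, carried out inside the refined $W$-superalgebra $W_\chi'$. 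For $\ggg = \gl(m|n)$, Remark \ref{pbw1}(1) guarantees that $\sfr$ is always even and the obstruction does not arise.
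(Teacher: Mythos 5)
Your plan — sharpen Theorem \ref{pbw}(4) by one Kazhdan level via an $\ad h$-parity argument on the associated graded — is exactly the right idea, and it is essentially how the paper proceeds. But there is a real gap in the way you try to implement it. You want a genuine $\ad h$-grading on $W_\chi$ ("after a small inner-derivation twist"), and then you project $\Theta_k$ onto its weight-$m_k$ component and project the congruence onto the weight-$(m_i+m_j)$ subspace. No such grading exists on $W_\chi$: the ideal $I_\chi=U(\ggg)N_\chi$ is not $\ad h$-graded, because the generators $x-\chi(x)$ with $x\in\mmm\cap\ggg(-2)$ and $\chi(x)\neq 0$ are inhomogeneous ($x$ has degree $-2$, the constant $\chi(x)$ has degree $0$). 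Consequently the projection $\text{Pr}:U(\ggg)\to U(\ppp)$ along $I_\chi$ is not graded, $W_\chi=\{u\in U(\ppp)\mid \text{Pr}([x,u])=0,\,x\in\mmm\}$ is not a graded subalgebra of $U(\ppp)$, and the weight-$m_k$ projection of $\Theta_k$ is not a meaningful operation inside $W_\chi$. Your own remark that $\Theta_{l+q+1}$ cannot be $\ad h$-homogeneous (top symbol has weight $-1$ yet $[\Theta_{l+q+1},\Theta_{l+q+1}]=\mathsf{id}$ has weight $0$) is a symptom of this same obstruction; retreating to $W_\chi'$ or to $\gl(m|n)$ does not repair the missing grading, it only hides one visible consequence of it.

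The fix — and this is what the paper actually does — is to use only the $\bbz_2$-parity $\sigma=(-1)^{\ad h}$, i.e.\ $\sigma(x)=(-1)^i x$ for $x\in\ggg(i)$. Unlike $\ad h$, this is an honest algebra automorphism of $U(\ggg)$ that preserves $N_\chi$ (because $(-1)^{-2}=1$ absorbs the problematic shift when $\chi(x)\neq 0$), hence preserves $I_\chi$ and descends to $W_\chi\cong Q_\chi^{\ad\mmm}$. Combined with $\sigma(x^{\mathbf a}y^{\mathbf b}u^{\mathbf c}v^{\mathbf d}\otimes 1_\chi)=(-1)^{|(\mathbf a,\mathbf b,\mathbf c,\mathbf d)|_e}x^{\mathbf a}y^{\mathbf b}u^{\mathbf c}v^{\mathbf d}\otimes 1_\chi$, which says $\sigma$ acts by $(-1)^d$ on the Kazhdan-degree-$d$ graded piece, your final parity-mismatch step goes through verbatim: replace each $\Theta_k$ by its $\sigma$-eigenspace component of eigenvalue $(-1)^{m_k}$ (this leaves the top symbol $Y_k$ and all PBW statements intact), note that $[\Theta_i,\Theta_j]-\Theta_{[Y_i,Y_j]}-q_{ij}(\Theta)$ then has $\sigma$-eigenvalue $(-1)^{m_i+m_j}$, and observe that a nonzero image in $\grsf_{m_i+m_j+1}(W_\chi)$ would have the opposite eigenvalue $(-1)^{m_i+m_j+1}$. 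Moreover this version works for odd $\sfr$ without restriction: $\Theta_{l+q+1}$ is a perfectly good $\sigma$-eigenvector with eigenvalue $-1$, and $[\Theta_{l+q+1},\Theta_{l+q+1}]=\mathsf{id}$ has eigenvalue $+1=(-1)^{(-1)+(-1)}$, so no contradiction arises and no retreat to $W_\chi'$ or to $\gl(m|n)$ is needed. In short: the parity is all your argument uses, and the parity is all that is available on $W_\chi$; the full $\ad h$-grading you postulate is both unnecessary and nonexistent.
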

\begin{proof}
For any $x\in\mathfrak{g}(i)$, set $\sigma(x)=(-1)^{2i}x$. Since $\sigma$ preserves
$I_\chi$ and $\mathfrak{m}$, it acts on $W_\chi\cong
Q_\chi^{\text{ad}\,\mathfrak{m}}$ as algebra automorphisms.
For any monomial $x^\mathbf{a}y^\mathbf{b}u^\mathbf{c}v^\mathbf{d}\otimes1_\chi$
in $Q_\chi$ with
$(\mathbf{a},\mathbf{b},\mathbf{c},\mathbf{d})\in\mathbb{Z}_+^{m'}\times\mathbb{Z}_2^{n'}\times\mathbb{Z}_+^s\times\mathbb{Z}_2^t$,
note that
\begin{equation}\label{sigma}
\sigma(x^\mathbf{a}y^\mathbf{b}u^\mathbf{c}v^\mathbf{d}\otimes1_\chi)=(-1)^{|(\mathbf{a},\mathbf{b},\mathbf{c},\mathbf{d})|_e}
x^\mathbf{a}y^\mathbf{b}u^\mathbf{c}v^\mathbf{d}\otimes1_\chi.
\end{equation}
Now the lemma follows from Theorem \ref{pbw}(4), Lemma \ref{3.3.2} and \eqref{sigma}.
\end{proof}

\subsection{}	    The second and third authors introduced in \cite[Definition 4.8]{ZS} the refined
$W$-superalgebra
	$$W_{\chi}':= Q_{\chi}^{\ad\mmm'},$$ which is a subalgebra of the finite
$W$-superalgebra $W_{\chi}$.
When $\sfr$ is even, since ${\mmm}'={\mmm}$ by definition, we have
$W_\chi=W'_\chi$. However, the situation changes in the case when $\sfr$ is
odd. Since ${\mmm}$ is a proper subalgebra of ${\mmm}'$, it follows that
$W'_\chi$ is a subalgebra of $W_\chi$.
In fact, one can formulate the PBW theorem of $W_\chi'$, which is the same as  Theorem \ref{pbw} with $\sfr$ being odd, just abandoning the related
topics on the element $\Theta_{l+q+1}$.   It is  also shown that $\grsf'W_\chi'\cong
S(\ggg_e)$ as $\bbc$-algebras under the Kazhdan grading in \cite[Corollary
3.8]{ZS1}. We refer to \cite[Theorem 3.7]{ZS1} for
more details.

Now we are in a position to introduce the main results of this section.
\begin{theorem}\label{3.3.4}
There exists an associative $\mathbb{C}[t]$-superalgebra $\mathcal{H}_{\chi}$
free as a module over  $\mathbb{C}[t]$
	 such that
	$$\mathcal{H}_{\chi}/(t-\lambda)\mathcal{H}_{\chi}\cong
\left\{\begin{array}{lcl}
	W'_{\chi} & \mbox{if} & \lambda\neq 0;
	\\U(\ggg_e) &\mbox{if} & \lambda = 0.	
	\end{array}
	\right.$$
In other words, the enveloping algebra $U(\ggg_e)$ is a contraction of $W'_{\chi}$.
\end{theorem}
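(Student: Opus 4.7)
The plan is to follow Premet's original strategy in \cite[Proposition 2.1]{P2}, adapting it to the super setting, and realize $\mathcal{H}_\chi$ as the Rees algebra of $W'_\chi$ with respect to a natural filtration $\{F^N_\natural W'_\chi\}_{N\geq 0}$ whose associated graded is $U(\ggg_e)$. Using the PBW basis of $W'_\chi$ (the $W'_\chi$-version of Theorem \ref{pbw} discussed in \S\ref{4.2}, with $\Theta_{l+q+1}$ discarded when $\sfr$ is odd), I would declare each generator $\Theta_k$ to have natural filtration degree $m_k$, the $\ad h$-weight of $Y_k$, and define $F^N_\natural W'_\chi$ to be the $\bbc$-span of PBW monomials $\Theta_1^{a_1}\cdots\Theta_l^{a_l}\Theta_{l+1}^{b_1}\cdots\Theta_{l+q}^{b_q}$ with $\sum_i a_i m_i+\sum_j b_j m_{l+j}\leq N$. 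Then I set $\mathcal{H}_\chi:=\bigoplus_{N\geq 0} F^N_\natural W'_\chi\cdot t^N$, viewed as a $\bbc[t]$-subalgebra of $W'_\chi[t]$; the two claimed specializations will fall out from standard Rees-algebra theory.

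The crucial technical step is simultaneously verifying $F^i_\natural\cdot F^j_\natural\subset F^{i+j}_\natural$ and identifying $\grsf_\natural W'_\chi$ with $U(\ggg_e)$. Both rest on Lemma \ref{4.6}, via a simple comparison between Kazhdan and natural degrees: a PBW monomial of total $\Theta$-degree $d$ has Kazhdan degree equal to its natural degree plus $2d$. Consequently the super-polynomial correction $q_{ij}$ in Lemma \ref{4.6}, of Kazhdan degree at most $m_i+m_j+2$ and initial total degree at least $2$, has natural degree at most $m_i+m_j-2$; the residual Kazhdan-filtration term in $W^{m_i+m_j}_\chi$ likewise has natural degree strictly less than $m_i+m_j$. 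Hence $[\Theta_i,\Theta_j]\equiv \Theta_{[Y_i,Y_j]}\pmod{F^{m_i+m_j-1}_\natural W'_\chi}$; straightening arbitrary products into PBW order respects the natural filtration, and in $\grsf_\natural W'_\chi$ the bracket $[\bar\Theta_i,\bar\Theta_j]=\bar\Theta_{[Y_i,Y_j]}$ matches the one on $U(\ggg_e)$ under $\bar\Theta_k\leftrightarrow Y_k$. Since PBW monomials are filtration-homogeneous, they descend to a basis of $\grsf_\natural W'_\chi$ compatible with the PBW basis of $U(\ggg_e)$, yielding $\grsf_\natural W'_\chi\cong U(\ggg_e)$.

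Freeness of $\mathcal{H}_\chi$ over $\bbc[t]$ is then immediate from the PBW basis: the elements $t^{\deg_\natural(M)}\cdot M$, as $M$ ranges over PBW monomials of $W'_\chi$, form a free $\bbc[t]$-generating set. The specializations are standard: $\mathcal{H}_\chi/t\mathcal{H}_\chi\cong \grsf_\natural W'_\chi\cong U(\ggg_e)$, and for any $\lambda\neq 0$ the evaluation $xt^N\mapsto \lambda^N x$ induces an isomorphism $\mathcal{H}_\chi/(\lambda-t)\mathcal{H}_\chi\cong W'_\chi$.

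I expect the main obstacle to be the degree-bookkeeping in the second paragraph, where one must carefully account for the parities of the odd generators and the subtle interaction between Kazhdan and natural degrees, especially for generators of natural degree zero. The decision to work with $W'_\chi$ in place of $W_\chi$ is essential here: it removes the anomalous generator $\Theta_{l+q+1}$ present in the $\sfr$-odd case, which does not lift an element of $\ggg_e$ and would otherwise obstruct the identification $\grsf_\natural W'_\chi\cong U(\ggg_e)$.
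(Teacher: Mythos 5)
Your proposal is correct and is essentially the paper's proof, repackaged in Rees-algebra language: the paper's twisted product $(x\cdot y)_\pi$ on $R\otimes W'_\chi$ is conjugation by the dilation $\pi$ that rescales each PBW monomial by $t^{\deg_\natural}$, so the $\mathbb{C}[t]$-lattice $\mathcal{H}_\chi$ defined there is isomorphic to your Rees algebra $\bigoplus_N F^N_\natural W'_\chi\, t^N \subseteq W'_\chi[t]$ via $M\mapsto t^{\deg_\natural(M)}M$. The technical core — multiplicativity of the natural filtration and the identification of $\grsf_\natural W'_\chi$ with $U(\ggg_e)$ — rests in both cases on Lemma \ref{4.6} together with the bookkeeping $\deg_e = \deg_\natural + 2(\text{total }\Theta\text{-degree})$, exactly as you describe.
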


\begin{proof}   The strategy of our arguments is the same as in \cite[Proposition 2.1]{P2}. For the readers' convenience, we write the arguments out here.
 	Consider the superalgebra $H(R)=R\otimes W'_{\chi}$ over the ring of Laurent
polynomials  $R=\mathbb{C}[t,t^{-1}]$ obtained from  $W'_{\chi}$ by extension of
scalars, and identify  $W'_{\chi}$ with the subspace $\mathbb{C}\otimes W'_\chi$ of hyperspace $H(R)$. Set the elements in $R$ to be even. Define an invertible
$R$-linear transformation $\pi$ on $H(R)$ by setting
	
\begin{align}\label{keyeq}
\pi(\Theta_1^{a_1}\cdots\Theta_l^{a_l}\Theta_{l+1}^{b_1}
\cdots\Theta_{l+q}^{b_{q}})
=t^{2(a_1m_1+
\cdots+a_lm_l+b_1m_{l+1}+\cdots+b_qm_{l+q})}
\Theta_1^{a_1}\cdots\Theta_l^{a_l}\Theta_{l+1}^{b_1}\cdots\Theta_{l+q}^{b_{q}}
\end{align}
and extending to $H(R)$ by  $R$-linearity. We view $\pi$  as an isomorphism from
$H(R)$ onto a
	new $R$-superalgebra $H(R,\pi)$ with underlying $R$-module  $R\otimes
	W'_\chi$  and with associative product
	given by  $(x\cdot y)_\pi:=\,
	\pi^{-1}\big(\pi(x)\cdot\pi(y)\big)$, for all $x,y\in R\otimes W'_\chi$. We
denote by  ${ \mathcal H}_\chi$ the free
	 $\mathbb{C}[t]$-submodule of $H(R, \pi)$ generated by
$\Theta_1^{a_1}\cdots\Theta_l^{a_l}\Theta_{l+1}^{b_1}
\cdots\Theta_{l+q}^{b_{q}}$, $\forall\,
	a_i\in\mathbb{Z}_+, b_j\in\mathbb{Z}_{2}$. Recall in \eqref{degeTheta} we have
$$\deg_e\big(\Theta_1^{a_1}\cdots\Theta_l^{a_l}\Theta_{l+1}^{b_1}\cdots\Theta_{l+q}^{b_{q}}\big
)\,=\,
	2\bigg(\sum_{i=1}^l a_{i}m_{i}+\sum_{i=1}^q b_{i}m_{l+i}+\sum_{i=1}^l a_i+
\sum_{i=1}^q b_{i}\bigg).$$

Note that the initial form of
$q_{ij}$  has total degree  $\ge 2$  and  $\deg_e
	q_{ij}(\Theta_1,\ldots,\Theta_{l+q'}) =2(m_i+m_j+1)$ if $q_{ij}\neq0$.
By virtue of \eqref{comm} this yields
$$(\Theta_i\cdot\Theta_j-(-1)^{|\Theta_i||\Theta_j|}\Theta_j\cdot\Theta_i)_\pi\,=\,\pi^{-1}\big(t^{2(m_i+m_j)}
	[\Theta_i,\Theta_j]\big)\,\equiv\,\Theta_{[Y_i,Y_j]}\ \, \,
	\big({\text{mod}}\, \, t{\mathcal H}_{\chi}\big)$$
	    By induction on the Kazhdan degrees of
$\Theta_1^{a_1}\cdots\Theta_l^{a_l}\Theta_{l+1}^{ b_1}\cdots\Theta_{l+q}^{b_{q}}$
and by taking it into an account that
	  $\grsf'W'_\chi$ is a commutative superalgebra, we have
	  $(\Theta_i\cdot {\mathcal H}_\chi)_\pi
	\subseteq{\mathcal H}_\chi$  for $1\leq i\leq l+q$. So ${\mathcal H}_\chi$ is a
$\mathbb{ C}[t]$-subalgebra of
	 $H(R,\pi)$.
	
	If   $\lambda\ne 0$, then the homomorphism   $\bbc[t]\rightarrow\bbc$ taking
$t$ to $\lambda$ extends to a homomorphism $R\rightarrow\bbc$. The isomorphism
$\pi^{-1}$ injects   $(t-\lambda)H(R,\pi)$  onto
	$(t-\lambda)H(R)$. Because  ${\mathcal H}_\chi\cap
	(t-\lambda)H(R,\pi)=(t-\lambda){\mathcal H}_\chi$,
	$W'_\chi\cap(t-\lambda)H(R)=0$, we have
	$${\mathcal H}_\chi/(t-\lambda){\mathcal H}_\chi\cong
H(R,\pi)/(t-\lambda)H(R,\pi)
	\cong H(R)/(t-\lambda)H(R)\cong W'_\chi,$$by the theorem on isomorphism.

Now put $\bar{\mathcal H}_\chi:={\mathcal
		H}_\chi/t{\mathcal H}_\chi$, and identify the generators
$\Theta_i$ of   ${\mathcal H}_\chi$ with their images in $
\bar{\mathcal H}_\chi$. It follows from our earlier discussion that these
images satisfy the relations
	$[\Theta_{i},\Theta_{j}]=\Theta_{[Y_i,Y_j]}$
	for $1\leq i,j\leq l+q$.
By the universality
property of the enveloping superalgebra   $U(\ggg_e)$, there is an algebra
homomorphism  $\phi\colon\,U(\ggg_e)
	\twoheadrightarrow \bar{\mathcal H}_\chi$ with  $\phi(Y_i)=\Theta_i$
	for $1\leq i\leq l+q$.
Since   ${\mathcal H}_\chi$  is a free  $\mathbb{C}[t]$-module, the monomials
$\Theta_1^{a_1}\cdots\Theta_l^{a_l}\Theta_{l+1}^{b_1}\cdots\Theta_{l+q}^{b_{q}}$
are linearly independent in  $\bar{\mathcal H}_\chi$.  As a consequence,
	$\phi$ is an isomorphism.
\end{proof}

\begin{theorem}\label{cdd1}The following are true:
\begin{itemize}
\item[(1)] The graded algebra of the refined $W$-superalgebra is isomorphic to the universal enveloping
superalgebra of $\ggg_e$ under the Dynkin grading, i.e.,
\[\begin{array}{lcll}
\grsf:&\grsf W_\chi' &\rightarrow&U(\ggg_e)\\ &\grsf(\Theta_{i})&\mapsto&Y_{i}
\end{array}
\]for $1\leq i\leq l+q$.
\item[(2)] When $\textsf{r}$ is even, then $\grsf W_\chi\cong U(\ggg_e)$.
\end{itemize}
\end{theorem}
\begin{proof} The part (2) is a direct consequence of (1) because we have $W_\chi=W_\chi'$ when $\textsf{r}$ is even.

Now we consider the part (1). Pick a homogeneous basis $Y_1,\dots,Y_{l+q}$ for $\ggg_e$ as in \S\ref{4.2}, so
that the monomials $\Theta_1^{a_1}\cdots\Theta_l^{a_l}\cdot\Theta_{l+1}^{b_1}\cdots\Theta_{l+q}^{b_{q}}$
with $a_i\in\mathbb{Z}_+, b_j\in\mathbb{Z}_2$ for $1\leq i\leq l$ and
$1\leq j\leq q$ form a basis for $W_\chi'$. By virtue of \eqref{gen} and Lemma \ref{4.6}, one can easily conclude that \begin{equation}\label{natural}
\Theta_1^{a_1}\cdots\Theta_l^{a_l}\cdot\Theta_{l+1}^{b_1}\cdots\Theta_{l+q}^{b_{q}}
=Y_1^{a_1}\cdots Y_l^{a_l}\cdot Y_{l+1}^{b_1}\cdots Y_{l+q}^{b_{q}}+(\dag),
\end{equation}
where the first term on the right-hand side of \eqref{natural} lies in
$U(\mathfrak{p})(\sum_{i=1}^l a_{i}m_{i}+\sum_{i=1}^q b_{i}m_{l+i})$ and
the term $(\dag)$ lies in the sum of all strictly lower graded components. Hence the
monomials $Y_1^{a_1}\cdots Y_l^{a_l}\cdot Y_{l+1}^{b_1}\cdots Y_{l+q}^{b_{q}}$ constitute a homogeneous basis for $\grsf W_\chi'$. The same monomials constitute a
homogeneous basis for $U(\ggg_e)$ by the PBW theorem.	
\end{proof}

\begin{remark} In the case $\ggg=\gl(m|n)$ with $e\in\ggg_\bz$ being a principal nilpotent element,  the above theorem was formulated in \cite[Remark 4.6]{BBG}. In that case, $U(\ggg_e)$ can be realized precisely as a subalgebra of $U(\hhh)$ (see Remark \ref{W realization}(3) for more details).
\end{remark}

\section{Deformed action on $V^{\otimes d}$: as a module over finite
$W$-superalgebras}\label{sec: skryabin}

In this section we will discuss the filtered deformation of Vust's double
centralizer
property in the super case. To achieve this we need another formation of finite $W$-superalgebras.
\subsection{Pyramids and finite  $W$-superalgebras}\label{map for W}
 Now we turn to the case $\ggg=\gl(m|n)$.  Consider the principal nilpotent orbit with the standard representative  element
 \begin{align}\label{eq: e's expression}
 e=\sum_{i=1}^{m-1}e_{\bar i,
\overline{i+1}}+\sum_{j=1}^{n-1}e_{j,j+1}\in \ggg_\bz.
\end{align}
We have a pyramid associated with the pair $(m\leq n)$  as in (\ref{pyramid}).
Then $\ggg=\bigoplus_{r\in \bbz}\ggg(r)$ by declaring that $e_{i,j}$ is of degree
$$\text{deg}(e_{i,j})=\col(j)-\col(i).$$
{The above grading is a Dynkin grading on $\ggg$ under the adjoint action of $$h=-2\text{diag}(\text{col}(\bar1),\text{col}(\bar2),\ldots,\text{col}(\overline m),\text{col}(1),\text{col}(2),\ldots,\text{col}(n)),$$ such that ${\ggg}(r)=\{x\in{\ggg}\mid[h,x]=2rx\}$ with $r\in\bbz$.}
In this case, we take
\begin{equation}\label{eq: p h m}
\ppp:=\bigoplus_{r\geq 0}\ggg(r)=\bigoplus_{\col(i)\leq\col(j)}\bbc e_{i,j};\;\; \hhh:=\ggg(0)=\bigoplus_{\col(i)=\col(j)}\bbc e_{i,j};\;\; \mmm:=\bigoplus_{r<0}\ggg(r)=\bigoplus_{\col(i)>\col(j)}\bbc e_{i,j},
\end{equation}
Then the generalized Gelfand-Graev ${\ggg}$-module $Q_\chi$ and the finite $W$-superalgebra $W_\chi$ can be defined as in \eqref{eq: Q chi} and Definition \ref{W-C}, respectively.

Note that we have $U({\ggg})=U({\ppp})\oplus I_\chi$ by definition.
Let $\text{Pr}:
U({\ggg})\longrightarrow U({\mathfrak{p}})$ denote the corresponding linear
projection.  Then we have
\begin{lemma} (\cite[\S4]{BBG} or \cite[Remark 2.14]{ZS})\label{isoWchi} Keep the notations and assumptions as above. Then we
can identify $W_\chi$ with the following subalgebra in $U({\mathfrak{p}})$:
\begin{equation}\label{wchi}
\{u\in U({\mathfrak{p}})\mid\text{Pr}([x,u])=0~\text{for
any}~x\in{\mathfrak{m}}\}.
\end{equation}
Furthermore, with the above identification there is an isomorphism of $\mathbb{C}$-algebras
\begin{align}\label{thethirdeqdef}\varphi: W_\chi&\longrightarrow
Q_\chi^{\text{ad}\,{\mathfrak{m}}}\qquad u\mapsto u(1+I_\chi).
\end{align}
\end{lemma}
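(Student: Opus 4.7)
The plan is to reduce the lemma to a direct consequence of the PBW theorem for $U(\ggg)$. The first key input is the vector superspace decomposition $U(\ggg) = U(\ppp) \oplus I_\chi$, which follows from the PBW isomorphism $U(\ggg) \cong U(\ppp) \otimes U(\mmm)$ together with $U(\mmm) = \bbc \cdot 1 \oplus N_\chi$ (valid because $N_\chi$ has codimension one in $U(\mmm)$). This makes the projection $\text{Pr}: U(\ggg) \to U(\ppp)$ well defined and yields a linear isomorphism $\bar\varphi: U(\ppp) \to Q_\chi$ sending $u$ to $u + I_\chi = u\otimes 1_\chi$.

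Next, I will identify the $\text{ad}\,\mmm$-invariants under this isomorphism. For $u \in U(\ppp)$ and $x \in \mmm$, the induced $\text{ad}\,\mmm$-action on $Q_\chi$ satisfies $(\text{ad}\,x)(u + I_\chi) = [x,u] + I_\chi$, so $\bar\varphi(u)$ is $\text{ad}\,\mmm$-invariant if and only if $[x,u] \in I_\chi$ for every $x \in \mmm$; by the preceding decomposition this is equivalent to $\text{Pr}([x,u]) = 0$. Thus $\bar\varphi$ restricts to a linear bijection between the subset of $U(\ppp)$ described in (\ref{wchi}) and $W_\chi = Q_\chi^{\text{ad}\,\mmm}$. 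To check compatibility of the algebra structures, recall that the multiplication on $W_\chi$ was recorded in the paper as $(u+I_\chi)\cdot(v+I_\chi) := uv + I_\chi$. Choosing the canonical $U(\ppp)$-representatives furnished above, the product $uv$ again lies in $U(\ppp)$, so $\bar\varphi(u)\cdot\bar\varphi(v) = uv + I_\chi = \bar\varphi(uv)$. This both shows that the set in (\ref{wchi}) is closed under the ambient multiplication of $U(\ppp)$ and that the restriction $\varphi$ is a $\bbc$-algebra isomorphism onto $W_\chi$.

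The only point requiring modest care is the second step, where one must distinguish the adjoint action $(\text{ad}\,x)(u+I_\chi) = [x,u] + I_\chi$ from the left module action $x\cdot(u+I_\chi) = xu + I_\chi$ (these differ by the scalar $\chi(x)u$ modulo $I_\chi$, since $ux \equiv \chi(x)u \pmod{I_\chi}$), and verify that $\text{ad}\,\mmm$ indeed preserves $I_\chi$ --- the latter following from $[\mmm,\mmm]\subseteq\mmm$ together with the vanishing of $\chi$ on $[\mmm,\mmm]$. With these technicalities in hand, the lemma becomes a direct consequence of the triangular PBW decomposition $\ggg = \mmm \oplus \ppp$.
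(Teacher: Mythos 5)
Your argument is correct, and it fills in the standard justification for a lemma that the paper does not actually prove --- it is cited to \cite[\S4]{BBG} and \cite[Remark 2.14]{ZS}, so there is no in-paper proof to compare against. The backbone of your proof is exactly the expected one: the PBW-based triangular decomposition $U(\ggg)=U(\ppp)\oplus I_\chi$ coming from $U(\ggg)\cong U(\ppp)\otimes U(\mmm)$ and $U(\mmm)=\bbc\cdot 1\oplus N_\chi$, the resulting linear isomorphism $U(\ppp)\to Q_\chi$, the translation of $\ad\mmm$-invariance of $u+I_\chi$ into $\text{Pr}([x,u])=0$, and the multiplicativity check against the product formula $(u+I_\chi)(v+I_\chi)=uv+I_\chi$ recorded in \S\ref{kazhdan}.

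Two small points you gloss over but which hold and are worth keeping explicit. First, you correctly flag that $\ad\mmm$ must preserve $I_\chi$; this follows because for $x\in\mmm$, $a\in U(\ggg)$, $y\in\mmm$ one has $[x,a(y-\chi(y))]=[x,a](y-\chi(y))+(-1)^{|x||a|}a[x,y]$, and $[x,y]\in[\mmm,\mmm]\subseteq\mmm$ with $\chi([x,y])=0$, so $[x,y]=[x,y]-\chi([x,y])\in N_\chi$, making both summands lie in $I_\chi$. Second, your check that the set (\ref{wchi}) is closed under multiplication is really inherited from the already-granted fact that $Q_\chi^{\ad\mmm}$ is a subalgebra of $Q_\chi$ (itself part of the cited \cite[Theorem 2.12]{ZS} package), rather than being verified directly in $U(\ppp)$; a direct verification would need $[x,u]v\in I_\chi$ for $[x,u]\in I_\chi$ and $v\in U(\ppp)$, which is not a formal consequence of $I_\chi$ being a left ideal. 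Phrasing it the way you did --- transporting the algebra structure of $Q_\chi^{\ad\mmm}$ across the linear bijection $\bar\varphi$ --- sidesteps this cleanly and is the right move.
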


\begin{remark}\label{W realization}
(1) Up to isomorphisms, the finite $W$-superalgebra $W_\chi$ is only dependent on the set $\{m,n\}$ (see \cite[Remark 4.8]{BBG}).

(2) Here the definition is subjected to the condition ``$\text{Pr}\,([\mmm,u])=0$" with left-multiplication by $\mmm$,  which can be regarded as a ``left-handed" formulation. There is another so-called ``right-handed" formulation. Both of the definitions are equivalent because the two finite $W$-superalgebras defined in different ways are isomorphic as algebras (see \cite[Remark 4.7]{BBG}).

(3) Furthermore, $W_\chi$ can be precisely realized as a subalgebra of $U(\ppp)$ isomorphic to $U(\hhh)$ as below.  Associated with the Pyramid \eqref{pyramid}, one can define an automorphism $\eta^0$ of $U(\ppp)$ via the shift
$$ e_{i,j}\mapsto e_{i,j}-\delta_{i,j}d_i$$
for each $e_{i,j}\in \ppp$, where \
\begin{align*}
d_i=\begin{cases} 1,\;\;\;&\text{ if } \bar1\leq i\leq \overline{m};\cr -1,\;\;\;&\text{ if } 1\leq i\leq m;\cr
m-i,  \;\;\;&\text{ if } m<i\leq n.
\end{cases}
\end{align*}
Define a so-called Miura transform $\mu^0$ which is the composite of $\eta^0|_{W_\chi}$ and the projection of $U(\ppp)$ onto $U(\hhh)$ arising from the canonical projection $\ppp\twoheadrightarrow \hhh$. Then $\mu^0$ gives rise to an injection from $W_\chi$ into $U(\hhh)$ (cf.  \cite{BBG}).  However,  we are not going to take $\eta^0$ into our arguments  in the next section for  simplicity.
\end{remark}

\subsection{}\label{sec: bc and filtered} From this section on, {\sl{we will identify $W_\chi$ with the subalgebra of $U(\ppp)$ as described in \eqref{wchi}.}}

By \cite[Proposition 4.7]{ZS4}, there exists a one-dimensional module over $W_\chi$, which we denote by $\bbc_\bc$. Under the identification of $W_\chi$ as in Lemma \ref{isoWchi}, we can extend the one-dimensional $W_\chi$-module $\bbc_\bc$ to a $U(\ppp)$-module. In the following we will describe $U(\ppp)$-module $\bbc_\bc$ precisely.

From now on, fix $\bc = (c_1,c_2,\ldots,c_n)\in \bbc^n$. Let us introduce a one-dimensional $\ppp$-module $\bbc_\bc=\bbc 1_\bc$ by defining the action for any $e_{i,j}\in \ppp$ on the generator $1_\bc$ as
\begin{equation*}\label{def: bc}
e_{i,j} 1_\bc=\delta_{i,j}(-1)^{|i|} c_{\text{col}(i)}1_\bc.
\end{equation*}
Furthermore, we consider the mapping
\begin{equation*}\label{eq: eta bc}
\eta_\bc: U(\ppp)\rightarrow U(\ppp) \mbox{  with }e_{i,j}\mapsto e_{i,j}+\delta_{i,j}(-1)^{|i|}c_{\text{col}(i)}
\end{equation*}
for each $e_{i,j}\in \ppp$, which is an algebra automorphism. To be explicit, for any $e_{i,j}, e_{k,l}\in \ppp$   we have that
\begin{align}\label{etac}
\eta_\bc([e_{i,j},e_{k,l}])&=\eta_\bc(\delta_{j,k}e_{i,l}-(-1)^{(|i|+|j|)(|k|+|l|)}\delta_{i,l}e_{k,j})\cr
&=\delta_{j,k}e_{i,l}-(-1)^{(|i|+|j|)(|k|+|l|)}\delta_{i,l}e_{k,j}+\delta_{j,k}\delta_{i,l}(-1)^{|i|}c_{\text{col}(i)}\cr
&-\delta_{i,l}\delta_{j,k}(-1)^{(|i|+|j|)(|k|+|l|)+|k|}c_{\text{col}(k)}\cr
&=[\eta_\bc(e_{i,j}),\eta_\bc(e_{k,l})]+\delta_{i,l}\delta_{j,k}((-1)^{|i|}c_{\text{col}(i)}-(-1)^{(|i|+|j|)(|k|+|l|)+|k|}c_{\text{col}(k)}).
\end{align}
Now we consider the indices $i,j,k,l\in I$ in \eqref{etac}. On one hand, $e_{i,j}, e_{k,l}\in \ppp$
if and only if $\text{col}(i)\leq\text{col}(j)$ and $\text{col}(k)\leq\text{col}(l)$ by definition. On the other hand,  $\delta_{i,l}\delta_{j,k}\neq0$ entails that $\text{col}(i)=\text{col}(l)$ and $\text{col}(j)=\text{col}(k)$.
Therefore, for the last term $\delta_{i,l}\delta_{j,k}((-1)^{|i|}c_{\text{col}(i)}-(-1)^{(|i|+|j|)(|k|+|l|)+|k|}c_{\text{col}(k)})$ as in \eqref{etac}, we just need to consider the situation when $\text{col}(i)=\text{col}(j)=\text{col}(k)=\text{col}(l)$, and also $i=l, j=k$.  
Such a term is equal to $(-1)^{|i|}c_{\text{col}(i)}-(-1)^{(|i|+|j|)(|j|+|i|)+|j|}c_{\text{col}(j)}
=(-1)^{|i|}(c_{\text{col}(i)}-c_{\text{col}(j)})=0$. From all above we conclude  that $\eta_\bc([e_{i,j},e_{k,l}])
=[\eta_\bc(e_{i,j}),\eta_\bc(e_{k,l})]$ for any $i,j,k,l\in I$, accomplishing the proof.

Denote by  $V_\bc^{\otimes d}$ the graded $U(\ppp)$-module which is equal to $V^{\otimes d}$ as a graded vector space, and endowed with the action obtained  by  twisting the natural one by the automorphism  $\eta_\bc$. This is to say, $u\cdot v=\eta_{\bc}(u)v$  for $u\in U(\ppp)$ and $v\in V^{\otimes d}$.

Obviously, we can identify $\bbc_\bc\otimes V^{\otimes d}$ with $V_\bc^{\otimes d}$ so that $1_\bc\otimes v=v$ for any $v\in V^{\otimes d}$.
The above $\bbc_\bc$ and $V_\bc^{\otimes d}$ can be naturally considered as $W_\chi$-modules by restriction.

It follows from Lemma \ref{lem: basis} that $V_\bc^{\otimes d}$ becomes a $\ggg_e$-module by restriction. So we have the following tensor representation of $U(\ggg_e)$ on $V_\bc^{\otimes d}$  with
\begin{align}\label{def: phi d c}
\phi_{d,\bc}: U(\ggg_e)\rightarrow \End_\bbc (V_\bc^{\otimes d}),
\end{align}
which is the composition of $\phi_d$ (see \S\ref{2.5}
) and $\eta_\bc$. Note that the image of $\phi_{d,\bc}$ is  the same as the image of $\phi_d$. So  the super Vust theorem  is still true for the case with $V^{\otimes d}$ being replaced by $V_\bc^{\otimes d}$.
Correspondingly, we have the following representation of
$W_\chi$ with
\begin{align}\label{def: bigphi d c}
\Phi_{d,\bc}: W_\chi\rightarrow  \End_\bbc(V_\bc^{\otimes d}).
\end{align}

Recall that the grading on  $V$ is defined  by setting  $v_i$ to  be of degree
$(n-\text{col}(i))$ (see \S \ref{2.5}). Then both  $V_\bc^{\otimes d}$  and
$\text{End}_{\mathbb{C}}(V_\bc^{\otimes d})$ also have gradations. Therefore, $\text
{End}_{\mathbb{C}}(V_\bc^{\otimes d})$ can be considered as a filtered superalgebra
via
\begin{align}\label{5.1}
\operatorname{F}_r \text{End}_{\mathbb{C}}(V_\bc^{\otimes d}) = \bigoplus_{s \leq
	r}\text{End}_{\mathbb{C}}(V_\bc^{\otimes d})_s,
\end{align}
and then the homomorphism $\Phi_{d,\bc}$  is a homomorphism of filtered
superalgebras. Moreover,
if we identify the associated graded algebra  $\grsf\operatorname{End}_{\mathbb C}(V_\bc^{\otimes d}) $ with
$\operatorname{End}_{\mathbb C}(V_\bc^{\otimes d})$
in the obvious way, Theorem \ref{cdd1} along with Remark \ref{pbw1} shows that the associated graded map
$$\grsf\Phi_{d,\bc}:\grsf W_{\chi}\rightarrow
\operatorname{End}_{\mathbb C}(V_\bc^{\otimes d})$$
coincides with the map
$\phi_{d}$ as in \S\ref{2.5}.

\subsection{Whittaker functor and  Skryabin's equivalence}
In this subsection,   we turn back to the case of $\ggg$ being a general basic classical Lie superalgebra as in \S\ref{sec 5}. We will recall the connection between the super module category of the finite $W$-superalgebra $W_\chi$ associated with $\ggg$ and a nilpotent element in $\ggg_\bz$ and  the Whittaker super module category of $\ggg$. As usual, we make an appointment that for each category, the homomorphisms of the objects are even. In the subsequent arguments, supermodules will be simply called modules, and the notations in \S\ref{sec 5} are maintained.

\begin{definition}\label{Whittaker}
	A  ${\ggg}$-module $L$  is called  a Whittaker  module, if for any $a\in{\mathfrak{m}}$, the element $a-\chi(a)$ acts on $L$ locally nilpotently. A vector  $v\in L$  is called a Whittaker vector if $\forall a\in{\mathfrak{m}}$, we have $(a-\chi(a))v=0$.

\end{definition}

Let ${\ggg}\text{-}W\text{mod}^\chi$  denote the category of all the finitely generated
Whittaker ${\ggg}$-module. 
For any $L\in{\ggg}\text{-}W\text{mod}^\chi$,
let
$\text{Wh(L)}=\{v\in L\mid (a-\chi(a))v=0,\forall a\in{\mathfrak{m}}\}$ be
spanned by the  Whittaker  vectors in  $L$.
%
%
Recall that $W_\chi\cong(U({\ggg})/I_\chi)^{\text{ad}\,{\mmm}}$, and denote by $\bar{y}\in U({\ggg})/I_\chi$ the coset associated with $y\in U({\ggg})$. Then we have
\begin{theorem}(\cite[Theorem 2.16]{ZS})
	\begin{itemize}
\item[(1)] Given a Whittaker ${\ggg}$-module $L$ with an action map $\rho$, $\text{Wh}(L)$ is naturally a $W_\chi$-module by letting $$\bar{y}.v=\rho(y)v$$ for $v\in\text{Wh}(L)$ and $\bar{y}\in U({\ggg})/I_\chi$.
\item[(2)]	 For $M\in W_\chi\text{-mod}$, $Q_\chi\otimes_{W_\chi}M$ is a Whittaker ${\ggg}$-module by letting $$y.(q\otimes v)=(y.q)\otimes v$$ for $y\in U({\ggg})$ and $q\in Q_\chi,~v\in M$.\end{itemize}
\end{theorem}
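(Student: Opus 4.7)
My plan is to establish the two assertions by direct verification, treating each as an exercise in compatibility between the action of $U(\ggg)$ and the definitions of $I_\chi$, $W_\chi$, and $\text{Wh}(L)$.

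For (i), the first step is to show that the formula $\bar{y}.v=\rho(y)v$ descends through the quotient $U(\ggg)/I_\chi$ when the target is $\text{Wh}(L)$. Writing $I_\chi=U(\ggg)N_\chi$ where $N_\chi\subset U(\mmm)$ is generated by the elements $x-\chi(x)$ with $x\in\mmm$, this reduces to observing that each such generator annihilates every Whittaker vector by definition. Consequently $N_\chi\cdot\text{Wh}(L)=0$ and hence $I_\chi\cdot\text{Wh}(L)=U(\ggg)\cdot(N_\chi\cdot\text{Wh}(L))=0$, so $\bar{y}.v$ is independent of the representative $y$. Next I would show that the action restricts properly: for $\bar{y}\in W_\chi\cong(U(\ggg)/I_\chi)^{\ad\mmm}$, the defining condition translates to $[x,y]\in I_\chi$ for every $x\in\mmm$ (interpreted via the super-commutator). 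A one-line computation
\[
(x-\chi(x))\rho(y)v \;=\; (-1)^{|x||y|}\rho(y)(x-\chi(x))v + \rho([x,y])v
\]
then shows that both terms vanish on $v\in\text{Wh}(L)$, so $\rho(y)v\in\text{Wh}(L)$. Associativity and the unit axiom descend from those of $U(\ggg)$, completing (i).

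For (ii), my starting point is Definition \ref{W-C}: $W_\chi\cong(\End_\ggg Q_\chi)^{\op}$, so $Q_\chi$ carries commuting left $\ggg$- and right $W_\chi$-actions, making it a $(U(\ggg),W_\chi)$-bimodule. The tensor product $Q_\chi\otimes_{W_\chi}M$ therefore inherits a left $\ggg$-action via the stated formula, independently of bimodule representatives. To see the result is a Whittaker module, I would argue that $Q_\chi$ itself is Whittaker: by the PBW decomposition of \S\ref{kazhdan} every element equals $u\cdot 1_\chi$ for some $u\in U(\ggg)$ of bounded Kazhdan degree, and iterated bracketing with $a-\chi(a)$ strictly drops that degree, so for $p$ sufficiently large one rewrites $(a-\chi(a))^p u\cdot 1_\chi$ as an element of $U(\ggg)\cdot N_\chi\cdot 1_\chi=0$. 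Local nilpotency of $a-\chi(a)$ then transfers from $Q_\chi$ to $Q_\chi\otimes_{W_\chi}M$ via the identity $(a-\chi(a))^p(q\otimes v)=((a-\chi(a))^pq)\otimes v$. Finite generation of $Q_\chi\otimes_{W_\chi}M$ follows from finite generation of $M$ together with the cyclic generator $1_\chi\otimes M$.

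The main obstacle I anticipate is bookkeeping with the parity signs in the super-commutator step of (i): one must consistently use $[x,y]=xy-(-1)^{|x||y|}yx$ and verify that the graded $\ad\mmm$-invariance of $\bar{y}$ really produces the bracket relation above in the form needed to conclude $\rho(y)v\in\text{Wh}(L)$. A secondary subtlety is the Kazhdan-filtration argument showing $Q_\chi$ is Whittaker: one needs, for each fixed $a$ and fixed $q$, a single exponent $p$ making $(a-\chi(a))^p q=0$, which is guaranteed because $q=u\cdot 1_\chi$ lies in a single Kazhdan level $Q_\chi^r$ and each application of $a-\chi(a)$ either lowers the level or produces a factor from $N_\chi$ that annihilates $1_\chi$.
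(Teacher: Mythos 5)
The paper states this result only as a citation to \cite[Theorem 2.16]{ZS} and does not reproduce a proof, so there is no internal argument to compare against; the assessment is of your proposal on its own terms.

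Your argument is correct in outline and is the expected super-analogue of Skryabin's proof. Two places should be sharpened. In (i), the identity
\[
(x-\chi(x))\rho(y)v \;=\; (-1)^{|x||y|}\rho(y)(x-\chi(x))v + \rho([x,y])v
\]
is not a purely formal consequence of the super-commutator: expanding $x\rho(y)v$ gives the term $-\chi(x)\rho(y)v$, not $-(-1)^{|x||y|}\chi(x)\rho(y)v$, and these match only because $\chi$ is an even functional and hence vanishes on $\mmm_\bo$, so that $\chi(x)\neq 0$ forces $|x|=\bz$. This should be stated explicitly, as it is the one point in (i) where the super-structure genuinely matters. In (ii), your Kazhdan-degree argument is on the right track, but would be cleaner if you first recorded the single identity $(a-\chi(a))(u\cdot 1_\chi)=[a,u]\cdot 1_\chi$ (valid since $a\cdot 1_\chi=\chi(a)1_\chi$ and, again, $\chi$ vanishes on odd elements), which yields $(a-\chi(a))^p(u\cdot 1_\chi)=(\ad\, a)^p(u)\cdot 1_\chi$; since $\ad\, a$ lowers Kazhdan degree by at least $1$ for $a\in\ggg(j)$ with $j\le -1$, and the Kazhdan filtration on $Q_\chi$ is bounded below by zero, this vanishes once $p$ exceeds the Kazhdan level of $u\cdot 1_\chi$. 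The remaining steps — well-definedness modulo $I_\chi$ via $N_\chi\cdot\text{Wh}(L)=0$, the $(U(\ggg),W_\chi)$-bimodule structure on $Q_\chi$, transfer of local nilpotency through the first tensor factor, and finite generation from the images $1_\chi\otimes m_i$ — are handled correctly.
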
Moreover, the following theorem shows that there exist  category equivalences between
  ${\ggg}\text{-}W\text{mod}^\chi$  and  $W_\chi\text{-mod}$, which is the super version of  Skryabin's equivalence for finite $W$-algebras as in \cite{Skr}.
\begin{theorem}(\cite[Theorem 2.17]{ZS})\label{skr}
	The functor $Q_\chi\otimes_{W_\chi}-:W_\chi\text{-mod}\longrightarrow
	{\ggg}\text{-}W\text{mod}^\chi$  is a category equivalence, with
$\text{Wh}:{\ggg}\text{-}W\text{mod}^\chi\longrightarrow W_\chi\text{-mod}$ being
its quasi-inverse.
\end{theorem}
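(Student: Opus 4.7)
The plan is to prove Theorem \ref{skr} by showing the unit and counit of the adjunction $(Q_\chi\otimes_{W_\chi}-,\,\text{Wh})$ are natural isomorphisms, adapting Skryabin's original argument for ordinary finite $W$-algebras to the super setting.

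First, I would verify that both functors are well defined. For $\text{Wh}$, any $\bar y\in W_\chi\cong(U(\ggg)/I_\chi)^{\ad\,\mmm}$ acts on $\text{Wh}(L)$ by $\bar y.v=\rho(y)v$; this is well defined since $I_\chi v=U(\ggg)N_\chi v=0$ for $v\in\text{Wh}(L)$, and the $\ad\,\mmm$-invariance of $\bar y$ keeps $\bar y.v$ Whittaker. For $Q_\chi\otimes_{W_\chi}-$, the $\ggg$-action is by left multiplication on the first tensor factor, and the Whittaker condition follows because the Kazhdan filtration on $Q_\chi$ forces $a-\chi(a)$ to act locally nilpotently for every $a\in\mmm$.

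Next, the unit $\iota_M:M\to\text{Wh}(Q_\chi\otimes_{W_\chi}M)$, $m\mapsto\bar 1_\chi\otimes m$, should be a natural isomorphism. The key input is that $Q_\chi$ is free as a right $W_\chi$-module on an explicit PBW-type basis, as follows from Theorem \ref{pbw} together with a triangularity argument with respect to the Kazhdan filtration. Under the identification in Lemma \ref{isoWchi} one has $\text{Wh}(Q_\chi)=W_\chi\cdot\bar 1_\chi$, and right freeness then yields $\text{Wh}(Q_\chi\otimes_{W_\chi}M)=\bar 1_\chi\otimes M$, producing the inverse to $\iota_M$.

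Finally, for $L\in\ggg\text{-}W\text{mod}^\chi$ the counit $\epsilon_L:Q_\chi\otimes_{W_\chi}\text{Wh}(L)\to L$, $\bar u\otimes v\mapsto uv$, must be an isomorphism. I would place an increasing filtration on $L$ inherited from the Kazhdan filtration on $U(\ggg)$ acting on a finite generating set, transfer it to a compatible filtration on $\text{Wh}(L)$, and on the associated graded reduce to the statement that $\grsf(L)$ is freely generated over $S(\mmm)$ (acting via $\chi$-twisting) by $\grsf(\text{Wh}(L))$. This in turn follows from standard facts on graded Whittaker modules combined with the freeness of $\grsf(Q_\chi)$ over $\grsf(W_\chi)$, which is a super-commutative polynomial superalgebra by Theorem \ref{ZW pbw}. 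The main obstacle will be the parity bookkeeping in the super setting: when $\sfr=\dim\ggg(-1)_\bo$ is odd, Theorem \ref{pbw}(5) introduces an extra generator $\Theta_{l+q+1}$ satisfying $[\Theta_{l+q+1},\Theta_{l+q+1}]=\id$, and one must track this generator along with the Koszul-sign conventions for supertensor products in order to match dimensions on the graded level and conclude that $\epsilon_L$ is bijective.
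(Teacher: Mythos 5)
This theorem is stated in the paper with a citation to \cite[Theorem~2.17]{ZS}; the paper itself contains no proof of it, so there is no in-text argument to compare against. What the paper does supply nearby is Lemma~\ref{lem:QchifreeWchi} (freeness of $Q_\chi$ as a right $W_\chi$-module), and your proposal correctly identifies that freeness result, together with the $\ad\,\mmm$-invariance description $W_\chi\cong(U(\ggg)/I_\chi)^{\ad\,\mmm}$ and the Kazhdan filtration, as the load-bearing ingredients. In that sense your outline tracks the expected route: it is Skryabin's original argument (the appendix to \cite{P3}) transported to the super setting, which is presumably what \cite{ZS} does.

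That said, as a proof the proposal has a real gap at the counit step. You assert that $\epsilon_L$ is a bijection because "$\grsf(L)$ is freely generated over $S(\mmm)$ (acting via $\chi$-twisting) by $\grsf(\text{Wh}(L))$," and that "matching dimensions on the graded level" finishes the job; but Whittaker modules are typically infinite-dimensional and the modules in $\ggg\text{-}W\text{mod}^\chi$ are only finitely generated, so a naive dimension count does not apply. Skryabin's argument here is genuinely delicate: one needs local nilpotence of $(a-\chi(a))$ on $L$ for $a\in\mmm$, an exhaustive Kazhdan-type filtration compatible with that local nilpotence, and a spectral-sequence or step-by-step lifting argument to pass from the graded statement back to $L$ itself. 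Moreover, you flag the odd-$\sfr$ case (the extra generator $\Theta_{l+q+1}$ with $[\Theta_{l+q+1},\Theta_{l+q+1}]=\id$, cf. Theorem~\ref{pbw}(5) and Theorem~\ref{ZW pbw}(2)) as the "main obstacle" but do not explain how to resolve it; since $W_\chi$ is then a filtered deformation of $S(\ggg_e)\otimes\bbc[\Theta]$ rather than of $S(\ggg_e)$, the graded comparison of $\grsf(Q_\chi)$ over $\grsf(W_\chi)$ changes shape and the surjectivity/injectivity of $\epsilon_L$ needs a separate check. As a plan your proposal is pointed in the right direction, but both the filtration-to-module passage and the odd-$\sfr$ Clifford factor need actual arguments before this would count as a proof.
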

\subsection{Functor $-\circledast X$}
From now on, we will be concerned with $\ggg=\gl(m|n)$ and keep the same notations as in \S\ref{sec 3} and \S\ref{sec 5}.

As an analogue of the Lie algebra case (see \cite[\S8]{BKl2}),  associated with a finite-dimensional
$\ggg$-module $X$, in the following we will introduce a functor from the category of $W_\chi$-modules to
itself with$$-\circledast X: W_\chi\text{-mod}
\rightarrow W_\chi\text{-mod}.$$
Set $1_\chi$ to be the image of $1$ in $Q_\chi$ where $Q_\chi$  is identified with $U(\ggg)\slash I_\chi$, and 
define  the dot action of  $u \in U(\mathfrak{p})$  on  $Q_\chi$ by
$u\cdot u'1_\chi:= \eta_\bc(u)u'1_\chi$ for all $u'\in U(\ggg)$.
Let  $\{z'_1,\ldots, z'_r\}$  and  $\{z'_{r+1},\ldots, z'_\iota\}$  be the bases of the
even part and odd part of $\mmm$, respectively. Further assume that
$z'_i\in\ggg(-d_i)$ for $1 \leq i \leq \iota$. Then the elements  $[z'_i,e]$ are
linearly independent, and  $[z'_i,e]\in\ggg(1-d_i)$. There exist elements
$z_1,\ldots, z_r$  and  $z_{r+1},\ldots,
z_\iota\in \mathfrak{p}$ such that   $z_i\in\ggg(d_i-1)$ with
\[
(z'_i,[z_j,
e])=([z'_i,z_j], e)=\delta_{i,j}.
\]

\begin{lemma}\label{lem:QchifreeWchi}
	
	The right  $W_\chi\text{-}$module $Q_\chi$ is free with a basis	
	\[
	\{z_1^{i_1} \cdots
	z_r^{i_r}z_{r+1}^{\epsilon_{r+1}}\cdots z_\iota^{\epsilon_\iota}\cdot 1_\chi
	\vert\; i_1,\ldots, i_r \in{\bbz}_+ \text{ and } \epsilon_{r+1},\ldots,
	\epsilon_\iota \in \mathbb Z_2 \}.
	\]
\end{lemma}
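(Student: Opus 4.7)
My plan is to reduce the statement to a Kazhdan-graded PBW-type decomposition of $S(\ppp)$ over $S(\ggg_e)$, and then lift from graded to filtered in the standard way. First I would unpack the $(U(\ppp), W_\chi)$-bimodule structure on $Q_\chi$: using the PBW isomorphism $U(\ggg) \cong U(\ppp) \otimes U(\mmm)$ and the fact that $U(\mmm)$ acts on $1_\chi$ through the character $\chi$, the map $u \mapsto u \cdot 1_\chi$ gives an isomorphism of left $U(\ppp)$-modules $U(\ppp) \xrightarrow{\sim} Q_\chi$. Under the embedding $W_\chi \hookrightarrow U(\ppp)$ from Lemma \ref{isoWchi}, a routine check (using $W_\chi \cong (\End_\ggg Q_\chi)^{\op}$ and $\varphi(w) = w \cdot 1_\chi$) shows that the right $W_\chi$-action on $Q_\chi$ corresponds, after the identification above, to right multiplication by $W_\chi \subset U(\ppp)$ on $U(\ppp)$. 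Thus the lemma reduces to the assertion that $U(\ppp)$ is free as a right $W_\chi$-module with the prescribed basis.

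Next I equip $U(\ppp)$ with the Kazhdan filtration inherited from $U(\ggg)$, so that $\grsf U(\ppp) \cong S(\ppp)$, and recall from Theorem \ref{pbw}(1) and Theorem \ref{ZW pbw} that $\grsf W_\chi \cong S(\ggg_e)$, embedded in $S(\ppp)$ via the inclusion $\ggg_e \subset \ppp$ (the leading term of each $\Theta_i$ is $Y_i$ by \eqref{gen}). The graded version of the lemma then asserts that $S(\ppp)$ is a free graded right $S(\ggg_e)$-module with basis the super-commutative monomials $\bar z_1^{i_1}\cdots \bar z_r^{i_r}\bar z_{r+1}^{\epsilon_{r+1}}\cdots \bar z_\iota^{\epsilon_\iota}$, where $\bar z_i$ is the Kazhdan symbol of $z_i$ in $S(\ppp)$. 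By the PBW decomposition of a symmetric superalgebra, this is equivalent to the $\bbz_2$-graded vector space decomposition
\[
\ppp = \ggg_e \oplus \operatorname{span}\{z_1,\ldots,z_\iota\},
\]
with the first $r$ generators even and the remaining $\iota-r$ odd.

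Linear independence of the $z_i$'s modulo $\ggg_e$ falls out of the defining property $(y_i,[z_j,e]) = \delta_{ij}$: if $z = \sum \alpha_i z_i \in \ggg_e$, then $[z,e] = 0$ forces $\alpha_j = (y_j,[z,e]) = 0$ for all $j$. For the complementary dimension count $\dim \ppp - \dim \ggg_e = \iota = \dim \mmm$ one invokes the defining properties of the good grading, namely that $\ad e\colon\ggg(i)\to\ggg(i+2)$ is injective for $i\leq -1$ and surjective for $i\geq -1$, together with the Lagrangian/isotropic splitting chosen for $\ggg(-1)$ in the construction of $\mmm$; these imply $\ggg_e\subset\ppp$, and a telescoping count yields the required equality of dimensions, with parities of the complementary basis matched to those of $y_1,\ldots,y_\iota$ since $|z_i|=|y_i|$ by the defining pairing.

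With the graded freeness in place, the filtered statement follows by the standard argument: any nontrivial right $W_\chi$-linear relation among the proposed basis elements in $Q_\chi$, on passage to the top Kazhdan-graded part, would yield a nontrivial $S(\ggg_e)$-linear relation among the $\bar z$-monomials in $S(\ppp)$, contradicting the graded freeness; dually, spanning lifts inductively on Kazhdan degree. The main obstacle I anticipate is the verification of the direct sum decomposition $\ppp = \ggg_e \oplus \operatorname{span}\{z_i\}$ with the correct parity matching, particularly in the odd-$\sfr$ case where $\mmm$ contains the non-isotropic summand and the Kazhdan degrees of the $z_i$ corresponding to $\ggg(-1)'$ require careful bookkeeping.
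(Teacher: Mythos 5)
Your proof is correct, but it takes a genuinely different route from the paper's. The paper reproduces Skryabin's original argument \cite{Skr}: it totally orders the multi-indices $\mathbf{i}$, filters $Q_\chi$ by the subspaces $Q_\chi^{\mathbf i}$ annihilated by the left ideals $I_{\mathbf i}\subset U(\mmm)$ spanned by the $u_{\mathbf j}$ with $\mathbf j>\mathbf i$, and shows each successive quotient $Q_\chi^{\mathbf i}/Q_\chi^{\mathbf i'}$ is a rank-one free right $W_\chi$-module generated by the image of $Z^{\mathbf i}$. That argument works directly at the filtered level and does not need the PBW structure of $W_\chi$. Your argument instead transports the problem across $Q_\chi\cong U(\ppp)$ to the Kazhdan-associated graded level, where freeness over $W_\chi$ reduces to a supersymmetric-algebra PBW decomposition over $\grsf W_\chi\cong S(\ggg_e)\subset S(\ppp)$, i.e.\ ultimately to the linear splitting $\ppp=\ggg_e\oplus\operatorname{span}\{z_1,\dots,z_\iota\}$, and then lifts back. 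This is shorter, at the price of invoking the PBW theorem for $W_\chi$ (Theorem \ref{pbw}, Theorem \ref{ZW pbw}). Two details worth making explicit in your write-up: the odd-$\sfr$ complication you flag at the end does not arise, since $\sfr$ is always even for $\ggg=\gl(m|n)$ (Remark \ref{pbw1}), and \S\ref{sec: skryabin} uses the pyramid good grading with $\mmm=\bigoplus_{r<0}\ggg(r)$ and $\ppp=\bigoplus_{r\geq 0}\ggg(r)$, so there is no Lagrangian truncation of a degree $-1$ piece to worry about; and the dimension count $\dim\ppp-\dim\ggg_e=\iota$, parities included, falls out immediately from the nondegenerate pairing of $\mmm$ with $[\ppp,e]=\bigoplus_{r>0}\ggg(r)$ under $(\cdot,\cdot)$, without any delicate telescoping.
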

\begin{proof}	
The lemma can be proved in the same manner as in \cite{Skr} for the Lie algebra case. So we omit the details here.
\end{proof}

It follows from Lemma  \ref{lem:QchifreeWchi} that there is a unique right
$W_\chi$-module homomorphism
$p: Q_\chi \twoheadrightarrow
W_\chi$ with
\begin{align}\label{eq: def of p}
p(z_1^{i_1} \cdots
z_r^{i_r}z_{r+1}^{\epsilon_{r+1}}\cdots z_\iota^{\epsilon_\iota}\cdot
1_\chi)=\delta_{i_1,0}\cdots
\delta_{i_r,0}\delta_{\epsilon_{r+1},0}\cdots \delta_{\epsilon_\iota,0}
\end{align}
for all  $i_1,\ldots, i_r \in\bbz_+$, $\epsilon_{r+1},\ldots, \epsilon_\iota
\in \mathbb Z_2$. In particular, $p(1_\chi)=1$. Also note that $p$ is an even map.

\subsection{Tensor identities}
Now let $X$ be a finite-dimensional $\ggg$-module with a fixed basis  $w_1,\ldots, w_{r_0}\in X_{\bz}$, $w_{r_0+ 1},\ldots, w_{t}\in X_{\bo}$. For any $u\in
U(\ggg)$, define the coefficient functions  $c_{i,j}\in U(\ggg)^*$ by the
equality
\begin{align}\label{eq: def of func c}
u\cdot w_j =\sum_{i=1}^t c_{i,j}(u)w_{i}.
\end{align}

Given any $M\in{\ggg}\text{-}W\text{mod}^\chi$, 
it is clear that the usual $\ggg$-module tensor product $M \otimes X$
also belongs to the category $\ggg\text{-}W\text{mod}^\chi$. This is because $M$ is already assumed to be in $\ggg\text{-}W\text{mod}^\chi$, by definition for any $a \in \mmm$, the element $(a-\chi(a))$ acts  on $M$ locally nilpotent. On the other hand, $X$ is a finite-dimensional $\ggg$-module, then the nilpotent Lie algebra $\mmm$ acts on $X$ nilpotently in the sense of twisting through the automorphism $U(\mmm)\rightarrow U(\mmm)$ with $e_{ij}\mapsto e_{ij}-\chi(e_{ij})$. Hence  $M \otimes X$
belongs to the category $\ggg\text{-}W\text{mod}^\chi$.

%
%

With aid of the Skryabin's
equivalence, we define a functor of the category $W_\chi$-mod 
to itself
as below:
for a  $W_\chi$-module $M$, set $M \circledast-$ by
\begin{align}\label{star operator}
(M \circledast-)(X)=M \circledast X:= \text{Wh}((Q_\chi \otimes_{W_{\chi}}M)\otimes X),
\end{align}
  thereafter whose image will be directly denoted by $M\circledast X$.  This is an exact functor in $W_\chi$-mod.

 In the following, we introduce an important result which is a super version of \cite[Theorem 8.1]{BKl2} which will be used later.
  For this, firstly we fix an even right $W_\chi$-module homomorphism $p: Q_\chi\twoheadrightarrow W_\chi$ with $p(1_\chi)=1$ (see  \eqref{eq: def of p}).   By Skryabin's equivalence (Theorem \ref{skr}), the same  arguments as in \cite[Theorem 8.1]{BKl2} entail that the restriction
	of the map $(Q_\chi \otimes_{W_{\chi}}M)\otimes X \rightarrow M
	\otimes X$ sending  $(u1_\chi \otimes m)\otimes w$ to $p(u1_\chi)m
	\otimes w$  defines a natural even isomorphism of vector superspaces
	\[
	\chi_{M,X}: M\circledast X \rightarrow M \otimes X.
	\]
 When considering the regular $W_\chi$-module $M$ which is $W_\chi$ itself, the inverse image of $1\otimes w_j$ under the isomorphism $\chi_{W_\chi,X}$ can be written as
 \begin{align}\label{eq: inverse image}
 \chi_{W_\chi,X}^{-1}(1\otimes w_j)=\sum_{i=1}^t(x_{i,j}\cdot 1_\chi\otimes 1)\otimes w_i \;\mbox{ for unique elements } x_{i,j}\in U(\ppp).
 \end{align}
Since the map $\chi_{W_\chi,X}$ is even, all $x_{i,j}$ are even elements.
Furthermore, for $x\in \mathfrak{m}$ we have
	\begin{align}\label{eq: the second cond}
	&(x-\chi(x))\cdot \sum_{i=1}^{t}(x_{i,j}\cdot 1_{\chi}\otimes 1)\otimes w_{i}\cr
=&
\sum_{i=1}^{ t}([x, x_{i,j}]\cdot 1_{\chi}\otimes 1)\otimes w_{i}
	+\sum_{i=1}^{t} (x_{i,j}\cdot1_\chi\otimes 1)\otimes
x\cdot
	w_{i} \cr
{\overset{\eqref{eq: def of func c}}{=}}&\sum_{i=1}^{t}\Big([x, x_{i,j}]+\sum_{s=1}^{t}
c_{i,s}(x)x_{s,j}\Big)\cdot 1_{\chi}\otimes 1\otimes w_{i}.
	\end{align}
Note that $\chi_{W_\chi,X}^{-1}(1\otimes w_j)$ lies in $\text{Wh}((Q_\chi\otimes_{W_{\chi}} 1)\otimes X)$.  By definition, \eqref{eq: inverse image} and \eqref{eq: the second cond} give rise to the following inclusion
 $$[x, \eta_\bc(x_{i,j})] + \sum_{s=1}^{t} c_{i,s}(x)\eta_\bc(x_{s,j}) \in I_\chi.$$
  Taking the parities into account as above, by the same arguments as in the proof of \cite[Theorem 8.1]{BKl2} we have the following result.

\begin{theorem}\label{thm:chi-isom}
Let $M$ be any left  $W_\chi$-module, and $X$ be a finite-dimensional
$\ggg$-module  as above,  with super dimension $\underline{\dim}X=(r_0|t-r_0)$. For the natural isomorphism of  superspaces
	\[
	\chi_{M,X}: M\circledast X \rightarrow M \otimes X,
	\]
	its inverse map of $\chi_{M,X}$ can be described  by mapping
	 $m \otimes w_j$  to
	$\sum_{i=1}^{t}
	(x_{i,j} \cdot 1_\chi \otimes m ) \otimes
	w_i$, where   $(x_{i,j})_{t\times t}$ is an invertible supermatrix with entries
$x_{i,j}\in U(\mathfrak{p})$ being uniquely determined by the properties:
	\begin{itemize}
\item[(i)]  $|x_{i,j}|=\bz$ and $p(x_{i,j}\cdot 1_\chi)=\delta_{i,j}$.
\item[(ii)]	 $[x, \eta_\bc(x_{i,j})] + \sum_{s=1}^{t} c_{i,s}(x)\eta_\bc(x_{s,j}) \in I_\chi$ for all $x\in \mathfrak{m}$ and all $i,j$.
	\end{itemize}
\end{theorem}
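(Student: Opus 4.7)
The strategy is to reduce to the case $M=W_\chi$ by naturality, extract the supermatrix $(x_{i,j})$ from the explicit formula for $\chi_{W_\chi,X}^{-1}(1\otimes w_j)$, and then use the Whittaker-vector characterization to establish uniqueness. For the reduction, given any $m\in M$ the map $\rho_m\colon W_\chi\to M$, $u\mapsto u\cdot m$, is a $W_\chi$-module homomorphism; functoriality of $-\circledast X$ and $-\otimes X$, combined with naturality of $\chi$, yields $\chi_{M,X}^{-1}(m\otimes w_j)=(\rho_m\otimes\id_X)(\chi_{W_\chi,X}^{-1}(1\otimes w_j))$. It therefore suffices to treat $M=W_\chi$, in which case $Q_\chi\otimes_{W_\chi}W_\chi\cong Q_\chi$ and $W_\chi\circledast X=\text{Wh}(Q_\chi\otimes X)$.

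For extraction, I would use the PBW-type identification $Q_\chi\cong U(\ppp)\cdot 1_\chi$ arising from $U(\ggg)=U(\ppp)\oplus I_\chi$: this lets one write any element of $Q_\chi\otimes X$ uniquely as $\sum_i (u_i\cdot 1_\chi\otimes 1)\otimes w_i$ with $u_i\in U(\ppp)$. Expanding $\chi_{W_\chi,X}^{-1}(1\otimes w_j)$ in this form defines the $x_{i,j}\in U(\ppp)$. Applying $\chi_{W_\chi,X}$ back and matching against $1\otimes w_j$ yields $p(x_{i,j}\cdot 1_\chi)=\delta_{i,j}$, i.e.\ property (i); the parity constraint follows because $\chi_{W_\chi,X}$ is $\mathbb{Z}_2$-graded and the $w_j$ are homogeneous. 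Property (ii) is exactly the content of the computation \eqref{eq: the second cond} preceding the theorem: unfolding $(x-\chi(x))\cdot\chi_{W_\chi,X}^{-1}(1\otimes w_j)=0$ for $x\in\mmm$ via the coproduct action on $X$, and using the identity $(x-\chi(x))(x_{i,j}\cdot 1_\chi)=[x,\eta_\bc(x_{i,j})]\,1_\chi$ in $Q_\chi$, forces $[x,\eta_\bc(x_{i,j})]+\sum_s c_{i,s}(x)\eta_\bc(x_{s,j})\in I_\chi$.

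Uniqueness follows by running the computation in reverse: if $(x'_{i,j})\subset U(\ppp)$ satisfies (i) and (ii), then $\xi_j:=\sum_i(x'_{i,j}\cdot 1_\chi\otimes 1)\otimes w_i$ is a Whittaker vector by (ii), hence lies in $W_\chi\circledast X$, and by (i) it satisfies $\chi_{W_\chi,X}(\xi_j)=1\otimes w_j$; comparing coefficients of $w_i$ and using the injectivity of $U(\ppp)\hookrightarrow Q_\chi$ forces $x'_{i,j}=x_{i,j}$. Invertibility of the supermatrix is then a restatement of $\chi_{W_\chi,X}$ being a bijection, since under the given formula its inverse $\chi_{W_\chi,X}^{-1}$ is represented on the basis $\{w_j\}$ by the matrix $(x_{i,j}\cdot 1_\chi)$.

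The main obstacle is the sign- and parity-bookkeeping in the extraction step, where one must track both the twist $\eta_\bc$ hidden inside the dot action on $Q_\chi$ and the coproduct signs for $x\in\mmm$ acting on $X$ (which produce the coefficient functions $c_{i,s}$). These signs are already handled in the computation \eqref{eq: the second cond}, so beyond citing that computation the remaining effort is essentially organizational: packaging the naturality reduction, the coefficient extraction in $U(\ppp)$, and the Whittaker-to-preimage translation into a single statement.
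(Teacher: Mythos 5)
Your proposal is correct and follows essentially the same route as the paper, which itself simply cites the precomputations around \eqref{eq: inverse image}--\eqref{eq: the second cond} and then defers to the proof of \cite[Theorem 8.1]{BKl2}. You have merely unpacked that deferral: the reduction to $M=W_\chi$ by naturality, the extraction of $(x_{i,j})$ from the direct sum $U(\ggg)=U(\ppp)\oplus I_\chi$, the derivation of (i)--(ii) exactly as in \eqref{eq: the second cond}, and the uniqueness by reading the Whittaker condition backwards, are all the steps implicit in the paper's one-line proof and in \cite{BKl2}.
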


\begin{proof} With the arguments prior to the theorem, what remains to do is to repeat verbatim the proof of \cite[Theorem 8.1]{BKl2}. In particular, all $x_{i,j}$ are even because $\chi\in\ggg^*$ is even, and the isomorphism $\chi_{M,X}$ is also even.
\end{proof}

Then we have the following corollary.

\begin{corollary}\label{cor:mu-isom} Keep the notations as above. Additionally, suppose $M$ is  a $\ppp$-module,  and $X$ is a finite-dimensional
$\ggg$-module as defined above.
For any $u \in U(\mathfrak{p})$, $m\in M$ and
$w\in X$, the restriction of the map
\[\begin{array}{lcll}
\hat{\mu}_{M,X}:&(Q_\chi \otimes_ {W_\chi}M)\otimes X&\rightarrow& M\otimes
	X\\&(u\cdot 1_\chi \otimes m )\otimes w&\mapsto&um \otimes
	w
\end{array}
\]
defines an isomorphism $\mu_{M,X}$ of  $W_\chi$-modules by
	\[
	\mu_{M,X}: M\circledast X \cong M \otimes X.
	\]
Here, the $U(\ppp)$-modules $M$ and  $M\otimes X$ are regarded the left and right hand sides as $W_\chi$-modules by restriction.  The inverse map sends  $m \otimes w_k$  to $\sum_{i,j=1}^t(x_{i,j} \cdot 1_\chi \otimes
	y_{j,k}m)\otimes w_i$, where
	$(y_{i,j})_{t\times t}$ is the inverse matrix of  $(x_{i,j})_{t\times t}$ as defined in Theorem
\ref{thm:chi-isom}.
\end{corollary}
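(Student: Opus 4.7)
The plan is to prove the corollary in three steps, following the pattern of the Lie algebra case in \cite[\S 8]{BKl2} and bootstrapping from Theorem \ref{thm:chi-isom}.

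First, I would verify that $\hat{\mu}_{M,X}$ descends to a well-defined linear map on the $W_\chi$-balanced tensor product. For any $w \in W_\chi \subseteq U(\ppp)$, $u \in U(\ppp)$, and $m \in M$, the associativity of the $\ppp$-action on $M$ gives $(uw)m = u(wm)$, so both $(uw \cdot 1_\chi \otimes m) \otimes x$ and $(u \cdot 1_\chi \otimes wm) \otimes x$ have the same image $uwm \otimes x$. This step uses crucially that $M$ carries a $\ppp$-module structure extending its $W_\chi$-module structure.

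Second, I would construct a two-sided inverse via the invertibility of the supermatrix $(x_{i,j})$ provided by Theorem \ref{thm:chi-isom}. Let $(y_{i,j})_{t\times t}$ denote its inverse, so that $\sum_j x_{i,j} y_{j,k} = \sum_j y_{i,j} x_{j,k} = \delta_{i,k}$ in $U(\ppp)$. Define $\nu_{M,X}: M \otimes X \to (Q_\chi \otimes_{W_\chi} M) \otimes X$ by
\[
\nu_{M,X}(m \otimes w_k) = \sum_{i,j=1}^t (x_{i,j} \cdot 1_\chi \otimes y_{j,k} m) \otimes w_i.
\]
For each fixed $j$, the inner sum $\sum_{i}(x_{i,j} \cdot 1_\chi \otimes y_{j,k} m) \otimes w_i$ equals $\chi_{M,X}^{-1}(y_{j,k} m \otimes w_j)$ in the notation of Theorem \ref{thm:chi-isom}, hence lies in $M \circledast X$; summing over $j$ shows $\nu_{M,X}(m \otimes w_k) \in M \circledast X$. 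The composition $\hat{\mu}_{M,X} \circ \nu_{M,X}$ sends $m \otimes w_k$ to $\sum_{i}\bigl(\sum_j x_{i,j} y_{j,k}\bigr) m \otimes w_i = m \otimes w_k$, while $\nu_{M,X} \circ \hat{\mu}_{M,X}$ is the identity on the Whittaker subspace by the uniqueness of the $(x_{i,j})$-expansion of Whittaker vectors together with the dual identity $\sum_j y_{i,j} x_{j,k} = \delta_{i,k}$.

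Third, I would verify $W_\chi$-equivariance. The $W_\chi$-action on $(Q_\chi \otimes_{W_\chi} M) \otimes X$ comes from restricting the $\ggg$-action on $Q_\chi$ and tensoring with the $\ggg$-action on $X$ via the coproduct, while the $W_\chi$-action on $M \otimes X$ is the restriction to $W_\chi \subset U(\ppp)$ of the $\ppp$-action on $M$ tensored with the $\ggg$-action on $X$. For $w \in W_\chi$, comparing $\hat{\mu}_{M,X}(w \cdot [(u \cdot 1_\chi \otimes m) \otimes x])$ with $w \cdot (um \otimes x)$ reduces to the compatibility between left multiplication on $Q_\chi$ and the $\ppp$-action on $M$ through $\hat{\mu}_{M,X}$, combined with the fact that the coproduct respects the chain of inclusions $W_\chi \hookrightarrow U(\ppp) \hookrightarrow U(\ggg)$.

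The main obstacle is the third step: verifying $W_\chi$-equivariance requires careful handling of the interaction between the dot action defining the module structure on $Q_\chi \otimes_{W_\chi} M$ (which incorporates the twist by $\eta_\bc$ coming from the one-dimensional module $\bbc_\bc$) and the untwisted $\ppp$-action on $M$. Tracking the $\bbz_2$-graded Koszul signs consistently through the coproducts and super-tensor identifications, analogous to the parity analysis carried out in the proof of Theorem \ref{thm:chi-isom}, is where the actual technical work lies.
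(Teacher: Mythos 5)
Your proposal takes essentially the same route as the paper, which simply defers to Brundan--Kleshchev's Corollary 8.2 in \cite{BKl2}: check that $\hat\mu_{M,X}$ descends to the balanced tensor product using the $\ppp$-module structure on $M$, exhibit the explicit inverse $\nu_{M,X}$ via the invertible supermatrix $(x_{i,j})$ from Theorem \ref{thm:chi-isom}, and verify $W_\chi$-equivariance of the restriction to the Whittaker subspace. Your computation of $\hat\mu_{M,X}\circ\nu_{M,X}=\operatorname{id}$ and the argument that $\nu_{M,X}$ lands in $M\circledast X$ (by fixing $j$ and identifying the inner sum with $\chi_{M,X}^{-1}(y_{j,k}m\otimes w_j)$) are correct.

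One remark: you flag Step 3 as the main obstacle, but the $W_\chi$-equivariance of $\hat\mu_{M,X}$ on the Whittaker subspace is in fact the most elementary of the three steps, as in \cite{BKl2}: for $w\in W_\chi\subseteq U(\ppp)$ acting on a Whittaker vector $\xi=(u1_\chi\otimes m)\otimes x$ by left multiplication through the coproduct, one has $w\xi=\sum\pm\,(w_{(1)}u1_\chi\otimes m)\otimes w_{(2)}x$ with $w_{(1)}u\in U(\ppp)$ (since $U(\ppp)$ is a sub-bialgebra of $U(\ggg)$), and then $\hat\mu_{M,X}(w\xi)=\sum\pm\,(w_{(1)}u)m\otimes w_{(2)}x=w\cdot\hat\mu_{M,X}(\xi)$ by the $\ppp$-module axiom $(w_{(1)}u)m=w_{(1)}(um)$. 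The twist $\eta_\bc$ is baked consistently into both the dot-action notation $u\cdot 1_\chi=\eta_\bc(u)1_\chi$ and the defining properties of the matrices $(x_{i,j})$, so it cancels in the inverse-matrix identity rather than causing a genuine obstruction; your wording that the dot action ``defines the module structure on $Q_\chi\otimes_{W_\chi}M$'' is also imprecise, since the balanced tensor product uses the right $W_\chi$-action on $Q_\chi$, which is a different structure from the (left) dot action of $U(\ppp)$.
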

\begin{proof}
	The proof is similar to that in \cite[Corollary 8.2]{BKl2}. 
\end{proof}

    The composition of the functor  $\circledast$  can be defined  in the same way as in \cite[(8.8)--(8.10)]{BKl2}. In particular, for a finite-dimensional $\ggg$-module
$Y$, there is a
natural isomorphism:
\begin{align}\label{aVV}
\alpha_{M, X,Y}: (M\circledast X)\circledast Y \cong M \circledast (X \otimes
Y).
\end{align}
One can further define $M\circledast Y^{\circledast d}$ and has a natural isomorphism  $M\circledast Y^{\circledast d}\cong M \circledast Y^{\otimes d}$.  In the following we will consider tensor products involving $\circledast$ powers.

\subsection{}\label{sec: apply for Cc}
From now on, we focus on  $M = \bbc_\bc$ which is regarded as a $U(\ppp)$-module. Then we define the $(W_{\chi}, {\SHd})$-bimodule
structure on $\bbc_\bc \circledast V^{\otimes d}$. By the arguments as in \cite[\S3.3]{BKl}, the induction starting from (\ref{aVV}) gives rise to the following isomorphism of $W_\chi$-modules:
$$\mu_{\bbc_\bc, V^{\otimes d}}:\bbc_\bc\circledast V^{\circledast d}{\overset{\sim}{\longrightarrow}} \bbc_\bc\otimes V^{\otimes d}=V_\bc^{\otimes d}.$$
 Now we have the following corollary to Theorem  \ref{thm:chi-isom} and Corollary \ref{cor:mu-isom}, for the case with $M=\bbc_\bc$ and $X=V^{\otimes d}$.

\begin{corollary}\label{cor: chimu-Vtensord} Keep the notation $I:=I(m|n)$. The following hold:
\begin{itemize}
		\item[(1)] For all $\bfr=(r_1,\ldots,r_d), \bft=(t_1,\ldots,t_d)\in I^d$, there exit elements  $x_{\bfr,\bft}:=x_{r_1,t_1}\cdots x_{r_d,t_d} \in
	U(\mathfrak{p})$
satisfying
	\begin{itemize}
		\item[(i)] $[e_{i,j}, \eta_\bc(x_{\bfr,\bft})] + \sum_{\bfs\in I^d}
		\eta_\bc(x_{\bfs, \bft})
		\in I_\chi$ as long as $e_{i,j} \in \mathfrak{m}$, where the sum is over all $\bfs\in I^d$ obtained from $\bfr$ by replacing an entry equal to $i$ by $j$;	
		\item[(ii)] The action of $x_{\bfi,\bfj}$ on $\bbc_\bc$ is given by
		$x_{{\bfi},{\bfj}}.1_\bc=\delta_{{\bfi},{\bfj}}1_\bc$;
		\end{itemize}
\item[(2)] Under the isomorphism $\mu_{\bbc_\bc, V^{\otimes d}}$, the inverse
element of $v_\bfj\in V_\bc^{\otimes d}$ is
	$\sum_{\bfi\in I^d}
	(\eta_\bc(x_{{\bfi},{\bfj}})1_\chi \otimes
	1_\bc)\otimes v_{{\bfi}}\in\bbc_\bc \circledast V^{\otimes d}\subseteq (Q_\chi\otimes_{W_\chi}\bbc_\bc)\otimes V^{\otimes d}$.\end{itemize}
\end{corollary}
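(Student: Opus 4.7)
The plan is to iterate Theorem \ref{thm:chi-isom} and Corollary \ref{cor:mu-isom} for $M=\bbc_\bc$ and $X=V$, and then assemble the resulting relations on $V^{\otimes d}$ via the associativity isomorphism \eqref{aVV}. A single application of Theorem \ref{thm:chi-isom} to $V$ yields, via the coefficient functions $c_{i,s}(e_{\alpha,\beta})=\delta_{i,\alpha}\delta_{s,\beta}$ arising from the natural $\ggg$-action on $V$, unique even elements $x_{i,j}\in U(\ppp)$ ($i,j\in I$) satisfying $p(x_{i,j}\cdot 1_\chi)=\delta_{i,j}$ together with
\[
[e_{\alpha,\beta},\eta_\bc(x_{i,j})]+\delta_{i,\alpha}\eta_\bc(x_{\beta,j})\in I_\chi \quad\text{for all}~e_{\alpha,\beta}\in\mmm.
\]
The scalar identity $x_{i,j}.1_\bc=\delta_{i,j}\,1_\bc$ on the one-dimensional module is extracted by a direct comparison of the images of the distinguished element $(x_{i,j}\cdot 1_\chi\otimes 1_\bc)\otimes v_k$ under the $\mu_{\bbc_\bc,V}$ isomorphism of Corollary \ref{cor:mu-isom} and under its $p$-based companion $\chi_{\bbc_\bc,V}$ from Theorem \ref{thm:chi-isom}, which forces the associated matrix of scalars on $\bbc_\bc$ to be the identity.

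Next I set $x_{\bfr,\bft}:=x_{r_1,t_1}\cdots x_{r_d,t_d}$. Property (1)(ii) follows at once from multiplicativity of scalar actions on the one-dimensional module:
\[
x_{\bfr,\bft}.1_\bc=x_{r_1,t_1}\cdots x_{r_d,t_d}.1_\bc=\prod_{k=1}^{d}\delta_{r_k,t_k}\cdot 1_\bc=\delta_{\bfr,\bft}\,1_\bc.
\]
Property (1)(i) is obtained by iterating the single-variable congruence. Because each $\eta_\bc(x_{r_k,t_k})$ is even, the graded Leibniz rule produces the unsigned decomposition
\[
[e_{i,j},\eta_\bc(x_{\bfr,\bft})]=\sum_{k=1}^{d}\eta_\bc(x_{r_1,t_1})\cdots[e_{i,j},\eta_\bc(x_{r_k,t_k})]\cdots\eta_\bc(x_{r_d,t_d}).
\]
Substituting $[e_{i,j},\eta_\bc(x_{r_k,t_k})]\equiv -\delta_{r_k,i}\eta_\bc(x_{j,t_k})\pmod{I_\chi}$ and computing in $Q_\chi=U(\ggg)/I_\chi$ rather than in $U(\ggg)$ alone (so that each error term $\zeta_k\in I_\chi$ is iteratively absorbed by moving $m-\chi(m)$ factors past the subsequent $\eta_\bc(x_{r_l,t_l})$-factors and re-applying the single-variable relation at each step), yields the desired sum over tuples $\bfs$ obtained from $\bfr$ by replacing an entry equal to $i$ with $j$.

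For part (2) I induct on $d$ via the associativity isomorphism $\bbc_\bc\circledast V^{\otimes d}\cong(\bbc_\bc\circledast V^{\otimes(d-1)})\circledast V$ from \eqref{aVV}. The inductive hypothesis identifies $\mu_{\bbc_\bc,V^{\otimes(d-1)}}^{-1}(v_{\bfj'})=\sum_{\bfi'}(\eta_\bc(x_{\bfi',\bfj'})1_\chi\otimes 1_\bc)\otimes v_{\bfi'}$ for $\bfi',\bfj'\in I^{d-1}$; applying Corollary \ref{cor:mu-isom} to $M=\bbc_\bc\otimes V^{\otimes(d-1)}$ and $X=V$ appends one further factor $x_{i_d,j_d}$ on the left of $1_\chi$, while the auxiliary entries $y_{j_d,k}$ of the inverse matrix reduce to Kronecker deltas on the $1_\bc$-slot by (1)(ii), producing the full product $x_{\bfi,\bfj}=x_{i_1,j_1}\cdots x_{i_d,j_d}$ as required. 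The main obstacle will be the bookkeeping of the $\eta_\bc$-twist across iterated $\mu$-isomorphisms and the rigorous handling of the iterated congruences modulo $I_\chi$ when working with right-multiplication by $U(\ppp)$; both are manageable via a careful double induction, but the super-sign tracking demands care.
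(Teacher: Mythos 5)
Your plan follows the paper's own route: extract the single-variable congruence from Theorem \ref{thm:chi-isom} with $X=V$, form the products $x_{\bfr,\bft}=x_{r_1,t_1}\cdots x_{r_d,t_d}$, iterate the Leibniz rule for (1)(i), and feed (1)(ii) into Corollary \ref{cor:mu-isom} to get (2). The paper's own proof is equally terse (``the remaining things follow''), and you are at least honest about where the bookkeeping is delicate. But two of the steps you assert do not actually go through as stated.

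First, the derivation of (1)(ii). You claim $x_{i,j}.1_\bc=\delta_{i,j}1_\bc$ is ``extracted by a direct comparison of the images of the distinguished element $(x_{i,j}\cdot 1_\chi\otimes 1_\bc)\otimes v_k$ under $\mu_{\bbc_\bc,V}$ and $\chi_{\bbc_\bc,V}$.'' But $(x_{i,j}\cdot 1_\chi\otimes 1_\bc)\otimes v_k$ is not, by itself, a Whittaker vector, so it does not lie in $\bbc_\bc\circledast V$ and neither $\mu$ nor $\chi$ applies to it. If instead you compare the two maps on honest Whittaker vectors, what you find is that $\mu_{\bbc_\bc,V}\circ\chi_{\bbc_\bc,V}^{-1}$ is the linear automorphism of $\bbc_\bc\otimes V$ whose matrix is exactly $(x_{i,j}.1_\bc)_{i,j}$; so showing this matrix is the identity is \emph{equivalent} to showing $\mu=\chi$ on $\bbc_\bc\circledast V$, not a consequence of it. There is no a priori reason for $\mu$ and $\chi$ to agree (they are constructed from different data: the $\ppp$-action via $\eta_\bc$ on one hand, and the projection $p$ on the other), so this step needs a genuine argument — for instance a graded/leading-term comparison using the filtration of \S\ref{sec: bc and filtered} — which you have not supplied.

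Second, in part (2) you apply Corollary \ref{cor:mu-isom} with $M=\bbc_\bc\otimes V^{\otimes(d-1)}$ and $X=V$ and assert that ``the auxiliary entries $y_{j_d,k}$ of the inverse matrix reduce to Kronecker deltas on the $1_\bc$-slot by (1)(ii).'' That is not what Corollary \ref{cor:mu-isom} says: the inverse sends $m\otimes w_k$ to $\sum_{i,j}(x_{i,j}\cdot 1_\chi\otimes y_{j,k}\,m)\otimes w_i$, and here $y_{j,k}\in U(\ppp)$ acts on the \emph{whole} element $m\in\bbc_\bc\otimes V^{\otimes(d-1)}$ via the coproduct, not merely on the $\bbc_\bc$-slot. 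The reduction you want requires first passing through the associativity maps $\alpha_{M,X,Y}$ of \eqref{aVV} and re-identifying the Whittaker vectors at each stage; collapsing $y_{j_d,k}m$ to $\delta_{j_d,k}m$ skips exactly that reduction. (You also flag, correctly, that the Leibniz argument for (1)(i) only gives a congruence \emph{after} one controls the terms where a factor of $I_\chi$ sits to the left of further $U(\ppp)$-factors — since $I_\chi$ is only a left ideal, these do not stay in $I_\chi$ automatically; ``moving $m-\chi(m)$ past'' as you suggest generates new commutator terms that must themselves be tracked, and in the super case they contribute signs $(-1)^{|e_{i,j}|(|r_1|+\cdots+|r_{k-1}|)}$ that do not appear in the bare Leibniz expansion. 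That subtlety is present in the paper's own proof too, but in a review it is worth naming.)
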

\begin{proof} This corollary follows from Theorem  \ref{thm:chi-isom}  and Corollary \ref{cor:mu-isom}, along with the computation as below.

 Note that  $e_{i,j}(v_t)=\delta_{j,t}v_i$, and we have $c_{r,s}(e_{i,j})=\delta_{r,i}\delta_{s,j}$ for $i,j,r,s\in I=I(m|n)$. By Theorem \ref{thm:chi-isom}, if $e_{i,j}\in \mmm$, then
 \begin{align}\label{eq: single comp}
[e_{i,j}, \eta_\bc(x_{r,t})] + \sum_{s\in I} c_{r,s}(e_{i,j})\eta_\bc(x_{s,t})= [e_{i,j}, \eta_\bc(x_{r,t})] + \delta_{r,i}\eta_\bc(x_{j,t})\in I_\chi. \end{align}
Note that for $\bfr'=(r_1,\ldots,r_{d-1}), \bft'=(t_1,\ldots,t_{d-1})\in I^{d-1}$, we have
$$[e_{i,j}, \eta_\bc(x_{\bfr,\bft})]=[e_{i,j}, \eta_\bc(x_{\bfr',\bft'})]\eta_\bc(x_{r_d,t_d})+
(-1)^{|e_{i,j}||\eta_\bc(x_{\bfr',\bft'})|}\eta_\bc(x_{\bfr',\bft'})[e_{i,j},\eta_\bc(x_{r_d,t_d})].$$
Taking Theorem  \ref{thm:chi-isom}(i) and \eqref{eq: single comp} into consideration, we have the statement (1)-(i). The remaining statements follow from the above arguments.
\end{proof}

\section{Degenerate affine Hecke algebras and their double centralizers
with $W_\chi$}\label{6}
In this concluding  section, on the basis of \S\ref{sec: skryabin} we exploit the arguments of \cite{BKl} on finite  $W$-algebras to the super case, and finally obtain the duality presented in Theorem \ref{thm:dcthm}.

\subsection{The natural transformations of tensor functors and the dAHAs}

Brundan-Kleshchev introduced the degenerate affine Hecke algebra into the study of tensor representations of finite $W$-algebras in \cite{BKl} and \cite{BKl2}. Their arguments are in principle available to the super case.
Let $\ggg=\gl(V)$ for $V=\bbc^{m|n}$. For a given $W_\chi$-module $M$ and a finite-dimensional $\ggg$-module $V$, define a $W_\chi$-module endomorphism $\Omega:M \otimes V \rightarrow M \otimes V$ by
$$ \Omega =\sum_{i,j\in I(m|n)} (-1)^{|j|} e_{i,j} \otimes e_{j,i}.$$
  Recall that $V=\bbc^{m|n}$ admits a standard basis
 $v_{\bar1},\ldots,v_{\overline{m}}, v_{1},\ldots, v_{n}$.
 Consider the functor $-\circledast
V$ of the category $W_\chi$-mod. Define a natural transformation $x$ from the functor $-\circledast V$ to itself:
for any $W_{\chi}$-module $M$, $x_M$  is an endomorphism of $(- \circledast V)(M)=M\circledast V$, defined by left multiplication by $\Omega$, precisely
\begin{align}\label{eq: omega sign}
\Omega((u 1_\chi \otimes m)
\otimes v) = \sum_{i,j\in I(m|n)} (-1)^{|{j}|+(|i|+|j|)(|u|+|m|)}(e_{i,j} u 1_\chi \otimes m) \otimes e_{j,i}v,
\end{align}
where the $\ggg$-module $Q_\chi\otimes_{W_\chi}M$ is regarded as the first tensor position, and $V$ as the second.
Furthermore, keeping the map $\alpha_{M, X,Y}$ (see \eqref{aVV}) in mind we can define another natural transformation  $s$ from the functor $(-\circledast V)
\circledast V$ to itself as below.
Take
$$s_M: (M \circledast V) \circledast V
\rightarrow (M \circledast V) \circledast V$$
to be the composite map $\alpha_{M, V, V}^{-1} \circ
\widehat s_M \circ \alpha_{M, V, V}$, where  $\widehat s_M$  is the endomorphism of $M
\circledast (V \otimes V)=
\operatorname{Wh}((Q_\chi \otimes_{W_{\chi}} M) \otimes V \otimes V)$  arising from left multiplication by $\Omega^{[2,3]}$, which means
$$\Omega^{[2,3]} ((u 1_\chi \otimes m)\otimes v \otimes v') := \sum_{i,j\in I(m|n)}(-1)^{|j|+(|i|+|j|)|v|}
(u1_\chi \otimes m) \otimes e_{i,j} v \otimes e_{j,i} v'.$$
By a direct check (see \cite[Proposition 5.1]{CW1}), the above can be reformulated as below
\begin{align}\label{eq: reform [23]}
\Omega^{[2,3]} ((u 1_\chi \otimes m)\otimes v \otimes v')=
(-1)^{|v||v'|}(u1_\chi \otimes m) \otimes v'\otimes v.
\end{align}

More generally, for any  $d \geq 1$ we can introduce the following natural endomorphisms of the functor $-\circledast V^{\circledast d}$:
\begin{equation}\label{xxii}
x_i:= \bfone^{d-i} x \bfone^{i-1},\qquad
s_j := \bfone^{d-j-1} s \bfone^{j-1}
\end{equation}
for  $1 \leq i \leq d$ with $d\geq 1$ and   $1 \leq j \leq d-1$ for all $d\geq 2$.
By the arguments parallel to \cite[\S3]{BKl} and \cite[\S8]{BKl2}, all these give rise to  a well-defined right action of the dAHA $\SHd$ on $M\circledast V^{\circledast d}$ for any $W_\chi$-module $M$.

 There is a more convenient way to describe them. To achieve this,
we exploit the natural isomorphism
\begin{equation}
a_d:(-\circledast V)^d
\stackrel{\sim}{\longrightarrow}-\circledast V^{\otimes d},
\end{equation}
which is obtained by iterating the map in \eqref{aVV}.
For  $1 \leq i \leq d$ and $1 \leq j \leq d-1$, let  $(\widehat x_i)_M$  and
$(\widehat s_j)_M$  denote the endomorphisms of the image $(-\circledast
V^{\otimes d})(M)$ defined
by the left multiplication of the elements  $\sum_{h=1}^i \Omega^{[h,i+1]}$  and
$\Omega^{[j+1,j+2]}$, respectively. Here and thereafter, for $s>r$ set
$$\Omega^{[r,s]}=\sum_{i,j\in I(m|n)}(-1)^{|j|}\bfone^{\otimes(r-1)}\otimes e_{i,j}\otimes\bfone^{\otimes(s-r-1)}\otimes e_{j,i}\otimes \bfone^{\otimes(d+1-s)}.$$
Then we have the following reformulation of $x_i$ and $s_j$:
\begin{equation}\label{xisj}
x_i = a_d^{-1} \circ \widehat x_i \circ a_d,
\qquad
s_j = a_d^{-1} \circ \widehat s_j \circ a_d.
\end{equation}

As in the ordinary case (see \cite{BK0} and \cite{BKl}), we have the following important property for those  natural endomorphisms  $x_i,s_j$ of the functor $-\circledast V^{\circledast d}$.
\begin{theorem}\label{thm: real DAHA}
The above $x_i$ and $s_j$ satisfy the relations in Definition \ref{def: dasha} for $\SHd$.
\end{theorem}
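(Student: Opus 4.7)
My strategy is to verify each defining relation of Definition \ref{def: dasha} by working through the natural isomorphism $a_d$ of equation (\ref{xisj}), i.e., by checking the corresponding identities for the operators $\widehat{x}_i=\sum_{h=1}^{i}\Omega^{[h,i+1]}$ and $\widehat{s}_j=\Omega^{[j+1,j+2]}$ acting on $(Q_\chi\otimes_{W_\chi}M)\otimes V^{\otimes d}$. The relations then transport back to the operators $x_i,s_j$ on $M\circledast V^{\circledast d}$ defined in (\ref{xxii}). This reduces the whole theorem to identities among the operators $\Omega^{[r,s]}$ on tensor products of $V$'s, which can be verified uniformly for every $W_\chi$-module $M$.

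First I would handle the symmetric group part. By the key reformulation (\ref{eq: reform [23]}), $\Omega^{[j+1,j+2]}$ is precisely the super flip on the $(j{+}1)$-th and $(j{+}2)$-th tensor slots. Hence $\widehat{s}_j^{\,2}=1$, the braid relation $\widehat{s}_j\widehat{s}_{j+1}\widehat{s}_j=\widehat{s}_{j+1}\widehat{s}_j\widehat{s}_{j+1}$ and the distant commutativity $\widehat{s}_i\widehat{s}_j=\widehat{s}_j\widehat{s}_i$ for $|i-j|>1$ are just the usual Coxeter identities for super transpositions on disjoint sets of positions. A useful intermediate identity to record at this stage, which will be reused below, is the conjugation rule
\[
\widehat{s}_j\,\Omega^{[h,k]}\,\widehat{s}_j=\Omega^{[\sigma_j(h),\sigma_j(k)]},
\]
where $\sigma_j\in\fraksd$ is the transposition $(j{+}1\;\,j{+}2)$; this is a position-wise check using that $\widehat{s}_j$ simply swaps two adjacent tensor factors.

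Next I would prove the mixed relation $x_{i+1}s_i-s_ix_i=1$, which is the heart of the theorem. Using the conjugation rule together with $\widehat{s}_i^{\,2}=1$, for $h\leq i$ one has $\widehat{s}_i\,\Omega^{[h,i+1]}\widehat{s}_i=\Omega^{[h,i+2]}$, so
\[
\widehat{s}_i\,\widehat{x}_i\,\widehat{s}_i=\sum_{h=1}^{i}\Omega^{[h,i+2]}=\widehat{x}_{i+1}-\Omega^{[i+1,i+2]}=\widehat{x}_{i+1}-\widehat{s}_i.
\]
Multiplying on the right by $\widehat{s}_i$ and using $\widehat{s}_i^{\,2}=1$ yields $\widehat{x}_{i+1}\widehat{s}_i-\widehat{s}_i\widehat{x}_i=1$, which is the desired Hecke relation. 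The commutation $x_js_i=s_ix_j$ for $j\neq i,i{+}1$ is then immediate from the conjugation rule: $\widehat{s}_i$ only permutes the positions $i{+}1,i{+}2$, both of which are disjoint from the positions $1,\ldots,j+1$ appearing in $\widehat{x}_j$ when $j<i$, and symmetrically when $j>i+1$.

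Finally I would check the Jucys--Murphy commutativity $x_ix_j=x_jx_i$. This is the standard super-Casimir computation: for $a<b\leq c<d$ the operators $\Omega^{[a,b]}$ and $\Omega^{[c,d]}$ act on disjoint tensor slots and commute, while for overlapping triples one uses the three-term identity $[\Omega^{[a,c]}+\Omega^{[b,c]},\,\Omega^{[a,b]}]=0$, which in turn follows from the ad-invariance (under the diagonal $\ggg$-action) of the super-symmetric tensor $\Omega=\sum_{i,j}(-1)^{|j|}e_{i,j}\otimes e_{j,i}$ together with the fact that $\Omega^{[a,b]}$ implements the diagonal $\ggg$-action on the $a,b$ tensor slots up to a super-flip. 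Summing over the pairs in $\widehat{x}_i$ and $\widehat{x}_j$ then yields $[\widehat{x}_i,\widehat{x}_j]=0$. The main technical obstacle throughout will be the careful bookkeeping of the sign factors $(-1)^{|j|}$ in $\Omega$ and the Koszul signs $\alpha(\cdot,\cdot),\nu(\cdot,\cdot)$ introduced in \S\ref{sec morenota}; these signs conspire so that the formulas on the super side formally look identical to Brundan--Kleshchev's classical computations in \cite{BKl}, and once they are verified in the two base cases above, the identities transport through $a_d$ to give the required relations for $x_i,s_j$ on $M\circledast V^{\circledast d}$.
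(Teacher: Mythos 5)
Your proposal is correct and follows essentially the same route as the paper, whose proof is only a pointer: it records that the crucial relation \eqref{eq: reform [23]} (i.e.\ $\Omega^{[j+1,j+2]}$ acting as the super flip) persists in the super setting, and then delegates the remaining verifications to the direct checks in \cite[\S3.4]{BK0} and \cite[\S8.3]{BKl2}. What you have written is precisely the expansion of that ``direct check'': transporting through $a_d$, using the conjugation rule to get $\widehat{s}_i\widehat{x}_i\widehat{s}_i=\widehat{x}_{i+1}-\widehat{s}_i$, and invoking the ad-invariance of $\Omega$ for the Jucys--Murphy commutativity. One small inaccuracy worth flagging: in checking $x_js_i=s_ix_j$ for $j>i+1$, the positions are \emph{not} disjoint from $\{i+1,i+2\}$, so it is not literally ``symmetric'' to the $j<i$ case; rather, $\sigma_i=(i{+}1\;i{+}2)$ permutes the index set $\{1,\dots,j\}$ in the sum $\widehat{x}_j=\sum_{h=1}^{j}\Omega^{[h,j+1]}$ and leaves $j+1$ fixed, so the sum is stable under conjugation by $\widehat{s}_i$. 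This does not affect the validity of the argument.
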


\begin{proof}  In the super case, by \eqref{eq: reform [23]} a relation crucially making potential difference is still true as in the ordinary case \cite[\S4.4 and \S8.3]{BKl2}. By a direct check as in the ordinary case (see \cite[\S3.4]{BKl} and  \cite[\S8.3]{BKl2}), all relations in Definition \ref{def: dasha} of $\SHd$ are satisfied for the above $x_i$ and $s_j$. The theorem follows.
\end{proof}

By the above theorem, for any $W_{\chi}$-module $M$ there is a natural representation of the degenerate affine Hecke algebra   ${\SHd}$ on  $M \circledast
V^{\circledast d}$.

\begin{remark} In the case of  the functor $-\otimes V^{\otimes d}$, the corresponding natural transformations $x_i$ and $s_j$ can be similarly defined. They also satisfy the relations of $\SHd$ (see \cite[\S5.1]{CW}).
\end{remark}

\subsection{} Turn to the case $M=\bbc_\bc$. In \S\ref{sec: apply for Cc} 
there is already an isomorphism of $W_\chi$-modules: $$\bbc_\bc\circledast V^{\circledast d}\cong \bbc_\bc\otimes V^{\otimes d}=V_\bc^{\otimes d}.$$
The
${\SHd}$-module $\mathbb C_\bc \circledast V^{\circledast d}$ can be lifted to
$\bbc_\bc \circledast V^{\otimes d}$,  and
then to  $V_\bc^{\otimes d}$. This makes them into $(W_{\chi}, {\SHd})$-bimodules.
Let us describe the actions of the generators of  ${\SHd}$  on $\bbc_\bc
\circledast V^{\otimes d}$  and  $V_\bc^{\otimes d}$ explicitly.
Let $x_i$  and  $s_j$   act on
$$
\bbc_\bc \circledast V^{\otimes d} \subseteq
(Q_\chi \otimes_{W_\chi} \bbc_\bc) \otimes V^{\otimes d}
$$
in the same way as the endomorphisms of these modules defined by the
multiplication of $\sum_{h=1}^i \Omega^{[h,i+1]}$ and $\Omega^{[j+1,j+2]}$,
respectively. Regard $Q_\chi \otimes_{W_{\chi}} \bbc_\bc$ as the first tensor
position and the copies
of $V$ as positions $2,3,\dots,d+1$.

We now describe the action of  ${\SHd}$   on $V_\bc^{\otimes d}$ in this special case. Each $s_i$ acts by permuting the $i$th and $(i+1)$th tensor positions, arising from the morphism  $\mu_{\bbc_\bc, V^{\otimes d}}: \bbc_\bc \circledast V^{\otimes
 d}
 \rightarrow V_\bc^{\otimes d}$ as described in Corollary \ref{cor:mu-isom}. The action of each  $x_s$ is described by the
 following lemma.
\begin{lemma}\label{action} Set $I=I(m|n)$ as before.
	For $\mathbf{i}\in I^d$ with $1 \leq s \leq d$, the action of $x_s$ on
$V_\bc^{\otimes d}$ satisfies
\begin{align*}
	v_{{\mathbf{i}}} x_s =c_{\text{col}(i_s)}v_{{\mathbf{i}}}+ (-1)^{|i_s|} v_{\bfi-\imath_s}&+\sum_{\overset{1\leq t< s}{\text{col}(i_t)\geq\text{col}(i_s)}}(-1)^{|i_t|} v_{\bfi(t\; s)}\cr
&-\sum_{\overset{s<t\leq d}{\text{col}(i_t)<\text{col}(i_s)}} (-1)^{|i_t|}v_{\bfi(s\; t)},
\end{align*}
where $\bfi(t\; s)$ stands for the transposition interchanging $t$ and $s$ in $S_d$, and the meaning of $v_{\bfi-\imath_s}$ is the same as in (\ref{eq: imath}).
\end{lemma}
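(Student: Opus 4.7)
The plan is to pull back the computation from $V_\bc^{\otimes d}$ to $\bbc_\bc\circledast V^{\otimes d}\subset (Q_\chi\otimes_{W_\chi}\bbc_\bc)\otimes V^{\otimes d}$ via the $W_\chi$-module isomorphism $\mu_{\bbc_\bc,V^{\otimes d}}\circ a_d$. By \eqref{xisj} the right action of $x_s$ becomes left multiplication by $\widehat x_s=\sum_{h=1}^{s}\Omega^{[h,s+1]}$. Corollary~\ref{cor: chimu-Vtensord}(2) identifies the preimage of $v_\bfi$ as
\[
\tilde v_\bfi=\sum_{\bfj\in I^d}\bigl(\eta_\bc(x_{\bfj,\bfi})\,1_\chi\otimes 1_\bc\bigr)\otimes v_\bfj,
\]
and the projection $\mu_{\bbc_\bc,V^{\otimes d}}$ together with the relation $x_{\bfs,\bfi}\cdot 1_\bc=\delta_{\bfs,\bfi}\,1_\bc$ from Corollary~\ref{cor: chimu-Vtensord}(1)(ii) will select the index tuples appearing in the final answer.

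For the summands with $2\le h\le s$, the super-swap identity \eqref{eq: reform [23]} reduces $\Omega^{[h,s+1]}$ to permutation of the $(h-1)$-th and $s$-th $V$-positions of each term of $\tilde v_\bfi$. Descending through $\mu_{\bbc_\bc,V^{\otimes d}}$ and re-indexing $t=h-1$, one obtains candidate swap terms $(-1)^{|i_t|}v_{\bfi(t,s)}$ for $1\le t<s$, without any column constraint at this stage.

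The essential work is in $\Omega^{[1,s+1]}\tilde v_\bfi$, which expands (using \eqref{eq: omega sign} with trivial parity in the first slot) to
\[
\sum_{\bfj,\,i,\,j}(-1)^{|j|}\,\bigl(e_{i,j}\,\eta_\bc(x_{\bfj,\bfi})\,1_\chi\otimes 1_\bc\bigr)\otimes e_{j,i}v_\bfj.
\]
Since $e_{j,i}$ acts only in the $s$-th $V$-slot, the index $i$ is forced to equal $j_s$. Split the remaining sum over $j$ according to whether $e_{j_s,j}$ lies in $\ppp$ (i.e.\ $\col(j)\ge\col(j_s)$) or in $\mmm$ (i.e.\ $\col(j)<\col(j_s)$). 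In the $\ppp$-case, commute $e_{j_s,j}$ through $\eta_\bc(x_{\bfj,\bfi})$ and let it act on $1_\bc$ via $\delta_{j_s,j}(-1)^{|j_s|}c_{\col(j_s)}$; this yields the diagonal contribution $c_{\col(i_s)}v_\bfi$ and, combined with the output of the $h\ge 2$ step, produces precisely the $t<s$ swap terms satisfying $\col(i_t)\ge\col(i_s)$ (the mismatched $h\ge 2$ candidates with $\col(i_t)<\col(i_s)$ are cancelled by the $\mmm$-contribution in the $h=1$ computation). In the $\mmm$-case, apply the defining relation of Corollary~\ref{cor: chimu-Vtensord}(1)(i) to rewrite $e_{j_s,j}\,\eta_\bc(x_{\bfj,\bfi})\,1_\chi$ modulo $I_\chi$ in terms of $\sum_{\bfs}\eta_\bc(x_{\bfs,\bfi})\,1_\chi$, where $\bfs$ runs over tuples obtained from $\bfj$ by replacing one entry equal to $j_s$ by $j$; projecting using the $\delta$-property then picks out $(-1)^{|i_s|}v_{\bfi-\imath_s}$ (from replacing the $s$-th slot, where the weight constraint forces $j=i_s-\imath$) together with the $t>s$ swap terms $-(-1)^{|i_t|}v_{\bfi(s,t)}$ with $\col(i_t)<\col(i_s)$ (from replacing the other slots).

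The main obstacle is the systematic bookkeeping of super-signs distributed across $\Omega$, across Koszul permutations of homogeneous tensor factors, across the shift $\eta_\bc(e_{i,j})=e_{i,j}+\delta_{i,j}(-1)^{|i|}c_{\col(i)}$, and across reductions modulo $I_\chi$. The principal algebraic input is the defining commutator of Corollary~\ref{cor: chimu-Vtensord}(1)(i), the super analogue of the classical PBW rearrangement used in \cite[Theorem~3.4]{BK0}; the delicate point is confirming that the $\mmm/\ppp$ dichotomy in the $h=1$ computation produces \emph{precisely} the two column conditions ($\col(i_t)\ge\col(i_s)$ for $t<s$ and $\col(i_t)<\col(i_s)$ for $t>s$) with their correct signs.
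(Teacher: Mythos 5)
Your approach is genuinely different from the paper's. The paper reduces to the case $s=1$ by invoking the degenerate affine Hecke relation $x_{j+1}=s_jx_js_j+s_j$ (together with the already-understood super-flip action of the $s_j$'s), and then computes only $\Omega^{[1,2]}$; you instead expand $\widehat x_s=\sum_{h=1}^s\Omega^{[h,s+1]}$ directly for general $s$. Your route trades a clean inductive reduction for a simultaneous bookkeeping of all $\Omega^{[h,s+1]}$, and that is precisely where the proposal has concrete gaps rather than merely unwritten details.

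First, the claim that \eqref{eq: reform [23]} ``reduces $\Omega^{[h,s+1]}$ to permutation of the $(h-1)$-th and $s$-th $V$-positions'' is not correct as stated for non-adjacent $h<s$: $\Omega^{[h,s+1]}$ lands $e_{i,j}$ and $e_{j,i}$ in tensor slots $h$ and $s+1$, and the Koszul rule produces an extra factor $(-1)^{(|i|+|j|)(|j_{h-1}|+\cdots+|j_{s-1}|)}$ coming from the intermediate factors. After projecting by $\mu$ (which, via Corollary~\ref{cor: chimu-Vtensord}(1)(ii), forces $\bfj=\bfi$ in this part), the $h\ge 2$ contribution to $v_\bfi x_s$ carries the sign $(-1)^{|i_t||i_s|}\,(-1)^{(|i_{t+1}|+\cdots+|i_{s-1}|)(|i_t|+|i_s|)}$ for $t=h-1$, not the sign $(-1)^{|i_t|}$ appearing in the lemma's statement. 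In particular, when $\col(i_t)\ge\col(i_s)$ (where no cancellation from the $\mmm$-contribution is available on your account) the sign produced by $\Omega^{[h,s+1]}$ already disagrees with the target unless $|i_s|=\bo$, so the asserted dichotomy ``$\ppp$ gives the diagonal, $h\ge 2$ gives exactly the $t<s$ swaps with $\col(i_t)\ge\col(i_s)$'' cannot be right as you have stated it.

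Second, the key step — that the ``mismatched $h\ge 2$ candidates with $\col(i_t)<\col(i_s)$ are cancelled by the $\mmm$-contribution in the $h=1$ computation'' — is asserted but never verified, and the $\Omega^{[1,s+1]}$ term also acquires Koszul signs from the $V$-slots $1,\ldots,s-1$ sitting between position $1$ and position $s+1$; matching those signs against the $h\ge 2$ signs is exactly the content you would have to supply and exactly what your outline glosses over. The paper's proof sidesteps all of this: by computing only $\Omega^{[1,2]}$, the intermediate-slot Koszul signs vanish, the $\ppp/\mmm$ dichotomy really is clean, and the general $x_s$ formula is then pushed through the Hecke relation purely combinatorially. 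If you want to keep your direct expansion of $\widehat x_s$, you would need to carry the full intermediate-slot sign factors through both the $h\ge 2$ and the $h=1$ computations and exhibit the cancellations explicitly; at present those steps are missing.
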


\begin{proof}
First note that $x_{j+1}=s_jx_{j}s_j+s_j$ for $j=1,2,\ldots,d-1$. Carrying induction on  $j$, we just
need to check the situation with $j=1$. Let  $\mu$ denote the map  $\mu_{\mathbb
C_\bc,V^{\otimes d}}$ for simplicity. For $\bfi=(i_1,\ldots,i_d), \bfj=(j_1,\ldots,j_d)\in I^d$, take $x_{\bfi,\bfj}\in U(\ppp)$ as in  Corollary \ref{cor: chimu-Vtensord}. Recall that $\Omega^{[1,2]}=\sum_{i,j\in I}(-1)^{|j|}e_{i,j}\otimes e_{j,i}\otimes\bfone^{\otimes d-1}$.  Keeping \eqref{eq: omega sign} in mind, by definition we then have
\begin{align*}
			v_{\mathbf{i}} x_1 &=
			\mu\bigg(
			\Omega^{[1,2]}\Big(\sum_{\mathbf{j} \in I^d} (\eta_\bc(x_{\mathbf{j},\mathbf{i}}) 1_\chi
\otimes 1_\bc) \otimes v_{\mathbf{j}}\Big)
			\bigg)\cr
			&=
			\sum_{\mathbf{j},\mathbf{k}\in I^d}(-1)^{|k_1|+(|j_1|+|k_1|)|x_{\bfj,\bfi}|}\mu((e_{j_1,k_1} \eta_\bc(x_{\mathbf{j},\mathbf{i}}) 1_\chi \otimes 1_\bc)
			\otimes v_{\mathbf{k}})\cr
&=
			\sum_{\mathbf{j},\mathbf{k}\in I^d}(-1)^{|k_1|}\mu((e_{j_1,k_1} \eta_\bc(x_{\mathbf{j},\mathbf{i}}) 1_\chi \otimes 1_\bc)
			\otimes v_{\mathbf{k}})
		\end{align*}
where in the last summands, $\bfj,\bfk\in I^d$ satisfy the condition that $j_2=k_2,\ldots,j_d=k_d$; and the last equality is due to the fact $|x_{\bfj,\bfi}|=\bz$ (see  Theorem \ref{thm:chi-isom}(i)). In the following, we proceed with the arguments in different cases.
	
(i) First assume that $\operatorname{col}(j_1) \leq \operatorname{col}(k_1)$.
According to Corollary \ref{cor: chimu-Vtensord}(1)-(ii)  and the definitions of
$\mu$ and $\bbc_\bc$, we have $\mu((e_{j_1,k_1} \eta_\bc(x_{\mathbf{j},\mathbf{i}}) 1_\chi \otimes 1_\bc)
	\otimes v_{\mathbf{k}}) = 0$ except $\mathbf{i}=\mathbf{j} =\mathbf{k}$, and in this exceptional case we have
	\begin{align*}
	(-1)^{|k_1|}\mu((e_{j_1,k_1} \eta_\bc(x_{\mathbf{j},\mathbf{i}}) 1_\chi \otimes 1_\bc)\otimes v_{\mathbf{k}})
&=(-1)^{|i_1|} \mu((e_{i_{1},i_{1}}\eta_\bc(x_{\mathbf{i},
\mathbf{i}})1_{\chi}\otimes 1_\bc)\otimes v_{\mathbf{i}})\cr
	&=c_{\text{col}(i_1)}v_{\bfi}.
\end{align*}

(ii)	Now consider the terms with $\operatorname{col}(k_1) <
\operatorname{col}(j_1)$. With the identification of $W_\chi$ as in Lemma \ref{isoWchi},  $e_{j_{1}, k_{1}}$ becomes an element of $\mathfrak{m}$ (see \eqref{eq: p h m}). Note that
$$e_{j_1,k_1}
	\eta_\bc(x_{\mathbf{j},\mathbf{i}}) 1_\chi
	=\eta_\bc(x_{\mathbf{j},\mathbf{i}}) e_{j_1,k_1} 1_\chi +[e_{j_1,k_1},
\eta_\bc(x_{\mathbf{j},\mathbf{i}})]1_{\chi}.$$
So we have
\begin{align}\label{eq: mu app}
&(-1)^{|k_1|}\mu((e_{j_1,k_1} \eta_\bc(x_{\mathbf{j},\mathbf{i}}) 1_\chi\otimes 1_\bc)
			\otimes v_{\mathbf{k}})\cr
=&(-1)^{|k_1|} \mu ((\eta_\bc(x_{\mathbf{j},\mathbf{i}}) e_{j_1,k_1} 1_\chi\otimes 1_\bc)\otimes v_\bfk) +(-1)^{|k_1|}\mu(([e_{j_1,k_1},
\eta_\bc(x_{\mathbf{j},\mathbf{i}})]1_{\chi}\otimes 1_\bc)\otimes v_\bfk).
\end{align}
In the above equality, for the first summand on the right-hand side we know that
$\chi(e_{j_1,k_1})$ is zero unless $k_1 = j_{1}-1$ because the corresponding nilpotent element $e$ is  principal of the form (\ref{eq: e's expression}).   In the case, Corollary \ref{cor: chimu-Vtensord}(2) entails
$$\sum_{\bfj,\bfk\in I^d}
(-1)^{|k_1|} \mu((\eta_\bc(x_{\mathbf{j},\mathbf{i}}) e_{j_1,k_1} 1_\chi\otimes 1_\bc)\otimes
v_{\mathbf{k}}) = (-1)^{|i_1|} v_{\bfi-\imath_1},$$
where the meaning of $v_{\bfi-\imath_1}$ is the same as in (\ref{eq: imath}).

	For the second summand on the right-hand side of \eqref{eq: mu app}, thanks to
Corollary  \ref{cor: chimu-Vtensord}(1)-(i), by calculation we can conclude that
	\begin{align*}
&\sum_{\bfj,\bfk\in I^d}(-1)^{|k_1|} \mu(([e_{j_1,k_1},	\eta_\bc(x_{\mathbf{j},\mathbf{i}})]1_{\chi}\otimes 1_\bc) \otimes
v_{\mathbf{k}}) \cr
=& \sum_{\bfk\in I^d}- (-1)^{|k_1|}\bigg(\sum_{\bs\in I^d} \mu((\eta_\bc(x_{\bfs,\bfi})1_{\chi}\otimes
1_\bc) \otimes v_{\mathbf{k}})\bigg),
\end{align*}
where $\bfs\in I^d$ is obtained from $\bfj$ by replacing an entry equal to $j_1$ by $k_1$. Keeping in mind that $k_t=j_t$ for $t=2,\ldots,d$, by Corollary \ref{cor:mu-isom} and  $x_{\bfs,\bfi}1_{\bc}=\delta_{\bfs,\bfi}1_\bc$ (Corollary \ref{cor: chimu-Vtensord}(1)-(ii)) we have  that $\bfi$ is obtained from $\bfj$ by replacing an entry equal to $j_1$ by $k_1$, and all possible $\bfj$ are of the form $(i_1, i_2,\ldots, i_{t-1},i_1,i_{t+1},\ldots, i_d)$ for  $t\in\{2,\ldots,d\}$ as long as $\text{col}(i_1)>\text{col}(i_t)$. This implies that $k_1=i_t$. Correspondingly, all possible $\bfk$ are of the following form
\begin{align}\label{eq: bfk}
(i_t, i_2,\ldots, i_{t-1},i_1,i_{t+1},\ldots, i_d) \;\;\text{ with }  \text{col}(i_t)<\text{col}(i_1).
\end{align}
  By summing up, we have
\begin{align}
&\sum_{\bfj,\bfk\in I^d}(-1)^{|k_1|} \mu(([e_{j_1,k_1},	\eta_\bc(x_{\bfj,\bfi})]1_{\chi}\otimes 1_\bc)\otimes
v_{\bfk}) \cr
=& -\sum_{\overset{1<t\leq d}{\text{col}(i_t)<\text{col}(i_1)}} (-1)^{|i_t|} \mu((1_{\chi}\otimes
1_\bc)\otimes v_{\bfi(1\; t)})\cr
=& - \sum_{\overset{1<t\leq d}{\text{col}(i_t)<\text{col}(i_1)}}(-1)^{|i_t|} v_{\bfi(1\; t)}.
\end{align}

Therefore, we have
	\begin{align}\label{eq: action of x_1}
	v_{{\mathbf{i}}} x_1 =c_{\text{col}(i_1)}  v_{{\mathbf{i}}}+(-1)^{|i_1|} v_{\bfi-\imath_1}
-\sum_{\overset{1<t\leq d}{\text{col}(i_t)<\text{col}(i_1)}}(-1)^{|i_t|}  v_{\bfi(1\; t)}.
	\end{align}
\end{proof}
\subsection{Degenerate cyclotomic Hecke algebra}\label{map for deg sk}
\begin{lemma}\label{minimal} Keep the notations as before, in particular  $\ggg=\gl(m|n)$ ($m\leq n$). For $d \geq 2$, the minimal polynomial of the
	endomorphism
	of  $V_\bc^{\otimes d}$  defined by the action of
	$x_1$ is  $\prod_{i=1}^n(x - c_i)$.	
\end{lemma}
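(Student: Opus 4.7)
The plan is to bound the minimal polynomial from above and below using the explicit formula for $x_1$ in Lemma~\ref{action}. Let $P(x) := \prod_{i=1}^{n}(x-c_i)$.

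\emph{Upper bound.} Filter $V_\bc^{\otimes d}$ by $F_k := \operatorname{span}\{v_\bfi \mid \col(i_1) \leq k\}$ for $0 \leq k \leq n$, so that $F_0 = 0$ and $F_n = V_\bc^{\otimes d}$. Reading Lemma~\ref{action}, for $v_\bfi$ with $\col(i_1) = j$, the term $v_{\bfi - \imath_1}$ (when nonzero) has first-position column $j-1$, and every swap term $v_{\bfi(1\;t)}$ has first-position column $\col(i_t) < j$; both types of correction therefore lie in $F_{j-1}$. Hence $v_\bfi x_1 \equiv c_j v_\bfi \pmod{F_{j-1}}$, which yields the key inclusion $(x_1 - c_k)F_k \subseteq F_{k-1}$ for $1 \leq k \leq n$. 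Iterating produces $P(x_1)V_\bc^{\otimes d} \subseteq F_0 = 0$, so the minimal polynomial of $x_1$ divides $P(x)$.

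\emph{Lower bound.} Set $v := v_{(n,n,\ldots,n)}$ and $v_k := v_{(k,n,\ldots,n)}$ for $1 \leq k \leq n$. Since $\col(n) = n$, the swap condition $\col(i_t) < \col(i_1)$ in Lemma~\ref{action} is vacuous whenever the first position equals $k \leq n$ and every later position equals $n$. The formula therefore collapses to $(x_1 - c_k)v_k = -v_{k-1}$ for $2 \leq k \leq n$ and $(x_1 - c_1)v_1 = 0$. Iterating gives $(x_1 - c_{k+1})(x_1 - c_{k+2}) \cdots (x_1 - c_n)v = (-1)^{n-k}v_k$ for $1 \leq k \leq n$. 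The vectors $v_1, \ldots, v_n$ are distinct basis vectors of $V^{\otimes d}$, hence linearly independent, and they span the same subspace as $\{v, x_1 v, \ldots, x_1^{n-1}v\}$. Thus the cyclic $x_1$-subspace generated by $v$ has dimension exactly $n$, and no polynomial in $x_1$ of degree less than $n$ annihilates $v$. In particular, the minimal polynomial of $x_1$ has degree at least $n$.

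Combining both bounds, the minimal polynomial is a monic polynomial of degree exactly $n$ dividing the monic polynomial $P(x)$ of the same degree, so the two polynomials coincide. The main routine work is in the filtration step, where one must verify that every term produced by Lemma~\ref{action} respects the filtration; no deeper obstruction is expected.
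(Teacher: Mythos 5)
Your proof is correct, and for the lower bound it takes a genuinely different route from the paper. For the upper bound both you and the paper read off divisibility by $\prod_{i=1}^n(x-c_i)$ from the formula in Lemma~\ref{action}; you simply make this precise via the obvious column filtration $F_k=\operatorname{span}\{v_{\bfi}\mid \col(i_1)\le k\}$ on $V_\bc^{\otimes d}$, whereas the paper states the divisibility in a single sentence. For the lower bound the paper instead works at the level of the associated graded endomorphism: using the grading on $\operatorname{End}_{\mathbb C}(V_\bc^{\otimes d})$ from \eqref{5.1} it observes that $x_1\in F_1\operatorname{End}$, that $\grsf(x_1)=\wp\circ e\otimes\bfone^{\otimes(d-1)}$ (with $\wp$ the parity sign operator), and that this graded endomorphism has minimal polynomial $x^n$ because the odd Jordan block of $e$ has size $n\ge m$; a standard ``minimal polynomial of $\grsf(x)$ bounds that of $x$ from below'' argument then finishes. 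You avoid the associated-graded machinery and the sign operator $\wp$ entirely by exhibiting an explicit cyclic vector $v=v_{(n,\ldots,n)}$ and the $n$-dimensional $x_1$-invariant subspace spanned by $v_1,\dots,v_n$, on which $x_1$ acts as an upper bidiagonal matrix with nonzero superdiagonal, forcing the minimal polynomial to have degree at least $n$. Both arguments are valid; yours is more elementary and self-contained, while the paper's is more in the spirit of the filtered/graded apparatus used throughout \S 5--6. (A minor remark: your argument actually works for all $d\ge 1$, not just $d\ge 2$; the $d\ge 2$ in the statement is unnecessary.)
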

\begin{proof}
Keeping in mind \eqref{eq: imath} and \eqref{eq: bfk},
 by \eqref{eq: action of x_1} we have that the minimal polynomial of
 $V_\bc^{\otimes d}$ divides  $\prod_{i=1}^n(x-c_i)$ for any  $d\geq 1$. Recall the filtration of
 $\text{End}_{\mathbb{C}}(V_\bc^{\otimes d})$ defined as in \eqref{5.1}. Thanks to Lemma
 \ref{action}, the endomorphism of  $V_\bc^{\otimes d}$  defined by
	$x_1$ belongs to  $\operatorname{F}_1\operatorname{End}_{\mathbb
C}(V_\bc^{\otimes d})$, and the associated graded endomorphism of $V_\bc^{\otimes d}$ is
equal to  $\wp\circ e\otimes \bfone^{\otimes(d-1)}$ where $\wp\in \End_{\bbc}(V)$ is defined via $\wp(v)=(-1)^{|v|}v$ for any $\bbz_2$-homogeneous vector $v\in V$.
Since $e$ is principal nilpotent with
Jordan block of size $n$,   the corresponding minimal polynomial of $(\wp\circ e \otimes \bfone^{\otimes(d-1)})$ is exactly
equal to  $x^{n}$.
This implies that the degree of minimal polynomial of the
endomorphism defined by the action of $x_1$
cannot be of smaller than  $n$, completing the proof.
\end{proof}
Let $\Lambda_\bc =\sum_{i=1}^n\Lambda_{c_i}$ be an element of the free abelian group generated by symbols $\{\Lambda_a\mid a\in\bbc\}$. The corresponding degenerate cyclotomic
Hecke algebra ${\SHd}(\Lambda_\bc)$
is the quotient
 of ${\SHd}$ by the two-sided ideal
generated
by  $\prod_{i=1}^n(x - c_i)$.
By view of Theorem \ref{thm: real DAHA} and Lemma \ref{minimal}, the right action of ${\SHd}$ on
 $V_\bc^{\otimes d}$  factors through the quotient  ${\SHd}(\Lambda_\bc)$, and we can
 obtain
a homomorphism
\begin{equation}\label{psid}
\Psi_{d,\bc}: {\SHd}(\Lambda_\bc) \rightarrow \operatorname{End}_{\mathbb C}(V_\bc^{\otimes
d})^{\operatorname{\op}}.
\end{equation}

Define a filtration
$\operatorname{F}_0{\SHd}(\Lambda_\bc) \subseteq \operatorname{F}_1{\SHd}(\Lambda_\bc)
\subseteq \cdots$ by declaring that
$\operatorname{F}_r {\SHd}(\Lambda_\bc)$ is the span of all
$x_1^{i_1} \cdots x_d^{i_d} w$ for $i_1,\dots,i_d \geq 0$ and $w \in {\fraksd}$
with $i_1+\cdots+i_d \leq r$.
Then
there is a well-defined surjective homomorphism of graded superalgebras
$$
\zeta_d:
\mathbb C_n[x_1,\dots,x_d]\rrtimes \mathbb C {\fraksd}
\twoheadrightarrow
\grsf {\SHd}(\Lambda_\bc)
$$
such that $x_i \mapsto \grsf_1 x_i$, $s_j \mapsto \grsf_0
s_j$ for each  $i$, $j$.
By Lemma  \ref{action} we see that
the map  $\Psi_{d,\bc}$ defined in (\ref{psid}) is a homomorphism of filtered algebras
and
\begin{equation}\label{bag}
(\grsf \Psi_{d,\bc}) \circ \zeta_d = \bar\psi_d.
\end{equation}
Moreover, we have
\begin{lemma}\label{ze}
	the map  $\zeta_d:
	\mathbb C_n[x_1,\dots,x_d]
	\rrtimes \mathbb C {\fraksd}
	\rightarrow
	\grsf {\SHd}(\Lambda_\bc)$  is an isomorphism of graded
superalgebras.
\end{lemma}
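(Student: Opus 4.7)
The plan is to prove Lemma \ref{ze} via a dimension count, leveraging the surjectivity of $\zeta_d$ that is already built into its construction. The strategy rests on two pieces of input: a PBW-type basis of the source $\mathbb{C}_n[x_1,\ldots,x_d]\rrtimes \mathbb{C}\fraksd$, and the well-known PBW theorem for the degenerate cyclotomic Hecke algebra $\SHd(\Lambda)$.

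First I would verify that $\mathbb{C}_n[x_1,\ldots,x_d]\rrtimes \mathbb{C}\fraksd$ has basis $\{x_1^{i_1}\cdots x_d^{i_d}w\mid 0\leq i_j\leq n-1,\ w\in\fraksd\}$ and hence dimension $n^d\cdot d!$. Spanning follows from the defining straightening relations ($s_ix_{i+1}=x_is_i$, $s_ix_j=x_js_i$ for $j\neq i,i+1$, and the truncations $x_j^n=0$), which allow every element to be brought into the above normal form. Linear independence is a consequence of the semi-direct product construction itself, which realizes the algebra as a free right $\mathbb{C}\fraksd$-module on a monomial basis of the truncated polynomial ring $\mathbb{C}_n[x_1,\ldots,x_d]$.

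Next I would invoke the PBW theorem for $\SHd(\Lambda)$, which says that the same monomials $\{x_1^{i_1}\cdots x_d^{i_d}w\mid 0\leq i_j\leq n-1,\ w\in\fraksd\}$ form a basis of $\SHd(\Lambda)$ (cf.\ the input used in \cite[Lemma~3.5]{BKl}). By the very definition of the filtration $\operatorname{F}_r \SHd(\Lambda)$, this basis is compatible with the filtration --- namely $\operatorname{F}_r\SHd(\Lambda)$ is spanned by exactly those monomials with $i_1+\cdots+i_d\leq r$ --- so the associated graded algebra $\operatorname{gr}\SHd(\Lambda)$ likewise has dimension $n^d\cdot d!$, with graded basis given by the symbols $\operatorname{gr}(x_1^{i_1}\cdots x_d^{i_d}w)$. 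Comparing the two, $\zeta_d$ is a surjective homomorphism of graded algebras of equal finite dimension, hence a bijection and therefore an isomorphism of graded superalgebras.

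The only real obstacle is the invocation of the PBW basis for $\SHd(\Lambda)$; but this is a purely non-super statement about an algebra whose presentation is insensitive to the super structure on $V$ --- only the action of the algebra on $V^{\otimes d}$ carries the Sergeev-type signs --- so it may be quoted directly from the classical literature without modification. An alternative, should one wish to avoid citing the PBW theorem, is to combine the factorization $(\operatorname{gr}\Psi_d)\circ \zeta_d=\bar\psi_d$ in \eqref{bag} with faithfulness of $\bar\psi_d$ on a sufficiently large tensor power (injectivity of $\zeta_d$ would then follow by the identity just displayed), but this route requires an extra faithfulness argument and seems strictly less efficient than the dimension count above.
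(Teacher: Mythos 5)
Your proposal is correct and is essentially the argument the paper intends: the paper simply defers to \cite[Lemma 3.5]{BKl}, whose proof is exactly this dimension count of the surjection $\zeta_d$ against the PBW basis $\{x_1^{i_1}\cdots x_d^{i_d}w \mid 0\leq i_j\leq n-1,\ w\in\fraksd\}$ of the degenerate cyclotomic Hecke algebra. Your observation that the super structure lives only in the action on $V^{\otimes d}$, so the classical basis theorem for $\SHd(\Lambda)$ applies verbatim, is the right justification for importing that input unchanged.
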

\begin{proof}
Note that the monomial vectors
$v_{\mathbf{t}}=v_{t_{1}}\otimes
v_{t_{2}}\otimes\cdots\otimes v_{t_{d}}\in V^{\otimes d}$ for any $\bft=(t_1,\ldots,t_d)\in I^d$ constitute a basis of $V^{\otimes d}$. One can check directly that the map $\bar\psi_d$ in Theorem \ref{doublecentralizerin} is injective.
Hence by \eqref{bag} the map $\zeta_d$ is injective too.
\end{proof}

\subsection{Proof of Theorem \ref{thm:dcthm}: a higher level Schur-Sergeev
duality} \label{pf last}

The following lemma is a generalization of \cite[Lemma  3.6]{BKl}.

\begin{lemma}\label{thetrick}
	
	Let
	$\Phi:B \rightarrow A$ and $\Psi:C \rightarrow A$ be homomorphisms of
	filtered superalgebras such that
	$\Phi(B) \subseteq Z_A(\Psi(C))$, where
	$Z_A(\Psi(C))$ is the centralizer of  $\Psi(C)$  in  $A$. View the
subsuperalgebras $\Phi(B)$, $\Psi(C)$ and $Z_A(\Psi(C))$
	of $A$ as filtered superalgebras with filtrations
	induced by the one on $A$, so that the associated graded
	superalgebras are naturally subalgebras of  $\grsf A$. Then
	$$
	(\grsf \Phi)(\grsf B) \subseteq \grsf
\Phi(B)
	\subseteq \grsf Z_A(\Psi(C))
	\subseteq Z_{\grsf A} (\grsf \Psi(C))
	\subseteq Z_{\grsf A}( (\grsf \Psi)(\grsf
C)).
	$$
\end{lemma}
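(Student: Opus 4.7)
The plan is to verify the four inclusions in the displayed chain one at a time; each is routine once filtrations are tracked carefully, the only superalgebra subtlety being that all commutators are interpreted as super-commutators. Write $F_\bullet A$ for the filtration on $A$, and equip each of $\Phi(B)$, $\Psi(C)$, $Z_A(\Psi(C))$ with the induced filtration $F_i(-):=(-)\cap F_iA$; these inherited filtrations are exactly the ones that make $\operatorname{gr}\Phi(B)$, $\operatorname{gr}\Psi(C)$, $\operatorname{gr}Z_A(\Psi(C))$ into graded $\bbz_2$-subalgebras of $\operatorname{gr}A$.

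For the first inclusion, I would take a homogeneous element $\bar b\in\operatorname{gr}_iB$ represented by $b\in F_iB$. Since $\Phi$ is filtered, $\Phi(b)\in\Phi(B)\cap F_iA=F_i\Phi(B)$, so $(\operatorname{gr}\Phi)(\bar b)=\Phi(b)+F_{i-1}A$ lies in the image of $F_i\Phi(B)\hookrightarrow\operatorname{gr}_iA$, which is precisely $\operatorname{gr}_i\Phi(B)$. For the second inclusion, the hypothesis $\Phi(B)\subseteq Z_A(\Psi(C))$ gives $F_i\Phi(B)=\Phi(B)\cap F_iA\subseteq Z_A(\Psi(C))\cap F_iA=F_iZ_A(\Psi(C))$ at each filtration level, whence the inclusion passes to the associated graded. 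For the fourth inclusion, the argument of the first shows $(\operatorname{gr}\Psi)(\operatorname{gr}C)\subseteq\operatorname{gr}\Psi(C)$, and then because taking the centralizer in $\operatorname{gr}A$ reverses inclusions, we obtain $Z_{\operatorname{gr}A}(\operatorname{gr}\Psi(C))\subseteq Z_{\operatorname{gr}A}((\operatorname{gr}\Psi)(\operatorname{gr}C))$.

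The middle inclusion is the only one carrying content. Given $\bar x\in\operatorname{gr}_iZ_A(\Psi(C))$ represented by $x\in Z_A(\Psi(C))\cap F_iA$, and any homogeneous $\bar y\in\operatorname{gr}_j\Psi(C)$ represented by $y\in\Psi(C)\cap F_jA$, the super-commutator $[x,y]=xy-(-1)^{|x||y|}yx$ vanishes in $A$ by definition of $Z_A(\Psi(C))$, hence vanishes in $\operatorname{gr}_{i+j}A$. Since $\operatorname{gr}\Psi(C)$ is generated by such homogeneous representatives, $\bar x$ supercommutes with all of $\operatorname{gr}\Psi(C)$, placing it in $Z_{\operatorname{gr}A}(\operatorname{gr}\Psi(C))$.

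The only place one has to be careful is to ensure that the induced filtration on a subalgebra $S\subseteq A$ gives $\operatorname{gr}S$ as a genuine graded subalgebra of $\operatorname{gr}A$, i.e., the canonical map $\operatorname{gr}_i S=(S\cap F_iA)/(S\cap F_{i-1}A)\hookrightarrow F_iA/F_{i-1}A=\operatorname{gr}_iA$ is injective — which holds because $(S\cap F_iA)\cap F_{i-1}A=S\cap F_{i-1}A$. This identification is what lets us compare $\operatorname{gr}\Phi(B)$, $\operatorname{gr}Z_A(\Psi(C))$, $\operatorname{gr}\Psi(C)$ inside the single ambient graded algebra $\operatorname{gr}A$, and is the only technical point worth underlining. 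No obstacle beyond the bookkeeping is expected.
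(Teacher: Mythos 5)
Your proof is correct and is exactly the routine verification that the paper omits, deferring instead to the non-super analogue \cite[Lemma 3.6]{BKl}: each inclusion is checked by passing the defining relations to the induced filtrations, with the only substantive step being that vanishing of the (super)commutator $[x,y]$ in $A$ forces its vanishing in $\operatorname{gr}_{i+j}A$. The one cosmetic point is that in the middle inclusion you should take the representatives $x$, $y$ to be $\bbz_2$-homogeneous as well as filtration-homogeneous so that the supercommutator sign is defined, but this is standard bookkeeping and does not affect the argument.
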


Now we consider the following maps:
\begin{equation*}\label{co2}
W_{\chi} \stackrel{\Phi_{d,\bc}}{\longrightarrow}
\operatorname{End}_{\mathbb C}(V_\bc^{\otimes d})
\stackrel{\Psi_{d,\bc}}{\longleftarrow}
{\SHd}(\Lambda_\bc).
\end{equation*}
Thanks to Theorem \ref{cdd1} along with Remark \ref{pbw1}, we can identify  the associated
graded map
 $\grsf \Phi_{d,\bc}$
with the map  $\phi_{d,\bc}$  in \S\ref{sec: bc and filtered}. Moreover, Lemma
 \ref{ze} and  (\ref{bag}) enable us to identify
$\grsf \Psi_{d,\bc}$  with  $\bar\psi_d$. Taking
$A=\operatorname{End}_{\mathbb C}(V_\bc^{\otimes d})$ and
 $(B,C, \Phi,\Psi)=
(W_{\chi}, {\SHd}(\Lambda_\bc)^{\operatorname{\op}}, \Phi_{d,\bc}, \Psi_{d,\bc})$ or $({\SHd}(
\Lambda_\bc)^{\operatorname{\op}}, W_{\chi}, \Psi_{d,\bc}, \Phi_{d,\bc})$,  it follows  from
Theorem \ref{doublecentralizerin} that
$$(\grsf \Phi)(\grsf B) = Z_{\grsf
A}((\grsf \Psi)(\grsf C)).$$

By virtue of Lemma  \ref{thetrick}, we have the following theorem.

\begin{theorem}\label{thm: concluding} Keep the notations as previously, in particular, $\ggg=\gl(m|n)$ ($m\leq n$), and $e\in\ggg_\bz$ is a principal nilpotent element. Then the following double centralizer property hold:
		\begin{align*}
		\Phi_{d,\bc}(W_{\chi}) &= \operatorname{End}_{{\SHd}(\Lambda_\bc)}(V_\bc^{\otimes
d}),\cr
		\operatorname{End}_{W_{\chi}}(V_\bc^{\otimes d})^{\operatorname{\op}}&=
\Psi_{d,\bc}({\SHd}(\Lambda_\bc)).
	\end{align*}	
\end{theorem}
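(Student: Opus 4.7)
The plan is to deduce the theorem by lifting the graded-level double centralizer property established in Theorem \ref{doublecentralizerin} to the filtered level, using Lemma \ref{thetrick} as the bridge. First I would record the three identifications that make this lifting possible: via Theorem \ref{ag} the associated graded map $\operatorname{gr}\Phi_{d,\bc}$ is identified with the map $\phi_{d,\bc}$ of \S\ref{sec: bc and filtered}; via Lemma \ref{ze} together with \eqref{bag}, the associated graded map $\operatorname{gr}\Psi_d$ is identified with the map $\bar\psi_d$ of \S\ref{2.5}; and the filtration on $\operatorname{End}_{\mathbb C}(V_\bc^{\otimes d})$ from \eqref{5.1} together with the natural Kazhdan/polynomial-degree filtrations on $W_\chi$ and ${\SHd}(\Lambda_\bc)$ make both $\Phi_{d,\bc}$ and $\Psi_d$ homomorphisms of filtered superalgebras. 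Under these identifications, Theorem \ref{doublecentralizerin} states precisely that $(\operatorname{gr}\Phi_{d,\bc})(\operatorname{gr} W_\chi)$ and $(\operatorname{gr}\Psi_d)(\operatorname{gr} {\SHd}(\Lambda_\bc)^{\op})$ are mutual centralizers in $\operatorname{gr}\operatorname{End}_{\mathbb C}(V_\bc^{\otimes d})$.

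Next I would apply Lemma \ref{thetrick} twice to the filtered setup \eqref{co2}. Setting $A=\operatorname{End}_{\mathbb C}(V_\bc^{\otimes d})$ and taking $(B,C,\Phi,\Psi)=(W_\chi,{\SHd}(\Lambda_\bc)^{\op},\Phi_{d,\bc},\Psi_d)$, the lemma yields the chain of inclusions
\[
(\operatorname{gr}\Phi_{d,\bc})(\operatorname{gr} W_\chi) \,\subseteq\, \operatorname{gr}\Phi_{d,\bc}(W_\chi) \,\subseteq\, \operatorname{gr} Z_A(\Psi_d({\SHd}(\Lambda_\bc))) \,\subseteq\, Z_{\operatorname{gr} A}((\operatorname{gr}\Psi_d)(\operatorname{gr} {\SHd}(\Lambda_\bc)^{\op})).
\]
Theorem \ref{doublecentralizerin} forces the outermost spaces in this chain to coincide, so every intermediate inclusion is an equality. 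In particular, $\operatorname{gr}\Phi_{d,\bc}(W_\chi) = \operatorname{gr} Z_A(\Psi_d({\SHd}(\Lambda_\bc)))$. Since both $\Phi_{d,\bc}(W_\chi)$ and $Z_A(\Psi_d({\SHd}(\Lambda_\bc)))$ are exhaustively filtered subspaces of $A$ with the first contained in the second and with equal associated graded, a routine induction on the filtration degree (comparing dimensions of filtration pieces level-by-level, using finite-dimensionality of each $F_r A$) yields $\Phi_{d,\bc}(W_\chi) = \operatorname{End}_{{\SHd}(\Lambda_\bc)}(V_\bc^{\otimes d})$. Swapping the roles of $B$ and $C$ in Lemma \ref{thetrick} and running the identical argument produces the second equality $\operatorname{End}_{W_\chi}(V_\bc^{\otimes d})^{\op} = \Psi_d({\SHd}(\Lambda_\bc))$.

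The main obstacle, and the step that deserves the most care, is the passage from the equality of associated graded spaces to the equality of filtered spaces. This rests on verifying that the filtrations involved are exhaustive, separated, and that each filtration piece is finite-dimensional so that the naive dimension-counting argument goes through. Here the good grading from the pyramid $\Xi$, the finite-dimensionality of $V_\bc^{\otimes d}$, and the PBW description of $W_\chi$ (Theorem \ref{pbw}, in the $\textsf{r}$-even case which applies for $\gl(m|n)$ by Remark \ref{pbw1}) combine to guarantee this. Everything else is bookkeeping: one only needs to verify that the parity signs entering $\Omega$, the action in Lemma \ref{action}, and the filtrations on both sides are consistent with the identifications $\operatorname{gr}\Phi_{d,\bc}=\phi_{d,\bc}$ and $\operatorname{gr}\Psi_d=\bar\psi_d$, which was already built into the developments of \S\ref{sec: skryabin} and \S\ref{map for deg sk}.
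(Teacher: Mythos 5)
Your proposal matches the paper's own proof essentially step for step: identify $\operatorname{gr}\Phi_{d,\bc}$ with $\phi_{d,\bc}$ via Theorem \ref{ag}, identify $\operatorname{gr}\Psi_d$ with $\bar\psi_d$ via Lemma \ref{ze} and \eqref{bag}, invoke Theorem \ref{doublecentralizerin} at the graded level, and apply Lemma \ref{thetrick} in both directions to collapse the chain of inclusions. You additionally make explicit the finite-dimensionality argument that passes from equality of associated graded subspaces to equality of the filtered subspaces, a step the paper leaves implicit; this is a correct and welcome filling-in, not a departure.
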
		

As a special case when $\bc=(0,\ldots,0)$, Theorem
\ref{thm:dcthm} follows from the above general statement.


\begin{remark} (1) One may naturally expect that the super Vust theorem is true for all nilpotent elements.

(2)  Furthermore, one may expect that the Schur-Sergeev duality for principal $W$-superalgebras in the present paper could be  established for all finite $W$-superalgebras associated with $\gl(m|n)$ and any nilpotent elements in $\gl(m|n)_\bz$.

(3) Related to the above expectations, there is a work \cite{Pe2} worth being mentioned where the author explicitly gave an isomorphism of $\bbc$-algebras between the finite $W$-superalgebra associated with an arbitrary nilpotent $e\in\gl(m|n)_\bz$ and a quotient of a certain subalgebra of the super Yangian $Y(m|n)$. This result  generalizes the main result of \cite{BK0} for $\gl(m|n)$.
\end{remark}

\end{document}